\documentclass[11pt,twoside]{article}
\usepackage[latin1]{inputenc}
\usepackage{amsmath,amsfonts,amscd,amsthm}

\newtheorem{thm}{Théorème}[section]
\newtheorem{prop}[thm]{Proposition}

\newtheorem{cor}[thm]{Corollaire}

\theoremstyle{definition}
\newtheorem{defin}[thm]{Définition}

\theoremstyle{remark}


\numberwithin{equation}{section}

\oddsidemargin = 0.5cm \evensidemargin= 0.5cm \textwidth=15cm
\topmargin=0.5cm \textheight=21.5cm

\markboth{C. Laurent-Thi\'ebaut\hfil}{\hfil
  Théorie $L^p$ pour l'équation de Cauchy-Riemann}

\pagestyle{myheadings}
\providecommand\ufootnote[1]{{\let\thefootnote\relax\footnote[0]{#1}}}

\newcommand{\cc}{\mathcal C}
\newcommand{\dc}{\mathcal D}
\newcommand{\ec}{\mathcal E}

\newcommand{\nb}{\mathbb N}

\newcommand{\cb}{\mathbb C}
\newcommand{\zb}{\mathbb Z}

\newcommand{\ci}{{\mathcal C}^\infty}

\newcommand{\ol}{\overline}

\newcommand{\pa}{\partial}
\newcommand{\opa}{\ol\partial}
\newcommand{\wt}{\widetilde}

 \DeclareMathOperator{\im}{Im}


\begin{document}

\title{Théorie $L^p$ pour l'équation de Cauchy-Riemann}

\author{Christine LAURENT-THI\'{E}BAUT}

\date{}
\maketitle

\ufootnote{\hskip-0.6cm UJF-Grenoble 1, Institut Fourier, Grenoble,
F-38041, France\newline CNRS UMR 5582, Institut Fourier,
Saint-Martin d'H\`eres, F-38402, France
}

\ufootnote{\hskip-0.6cm  {\it 2010 A.M.S. Classification}~: 32C37,
  32F10, 32W05 .\newline {\it Key words}~: équation de Cauchy-Riemann,
  dualité de Serre, estimations $L^p$.}

\bibliographystyle{amsplain}

Cet article propose une étude systématique de l'opérateur de Cauchy-Riemann relativement aux espaces $L^p$, $1<p<+\infty$, dans les variétés analytique complexes. L'intérêt des espaces $L^p$, $1<p<+\infty$, est que leurs espaces duaux sont des espaces de même type, plus précisément le dual d'un espace $L^p$, $1<p<+\infty$,  est un espace $L^{p'}$, où $p'$ est déterminé par $\frac{1}{p}+\frac{1}{p'}=1$. Cette propriété prend toute son importance dans l'étude de la dualité de Serre.

Notons que si $p=2$, alors $p'=2$ et les espaces $L^2$ sont des espaces de Hilbert. Le cas particulier $p=2$ a été très largement étudié via la théorie $L^2$ de L. Hörmander \cite{Ho} et l'étude de l'opérateur de Neumann initiée par G.B. Folland et J.J. Kohn \cite{FoKo} (voir \cite{ChSh} et les références contenues dans sa bibliographie). Un travail récent de D. Chakrabarti et Mei-Chi Shaw \cite{ChaSh} traite de la dualité de Serre dans ce cadre.

Une première partie s'intéresse à la cohomologie de Dolbeault $L^p_{loc}$ à valeurs dans un fibré vectoriel holomorphe sur une variété analytique complexe. Nous prouvons l'existence de l'isomorphisme de Dolbeault entre la cohomologie de Dolbeault $L^p_{loc}$ et la cohomologie de Dolbeault $\ci$ ainsi que la cohomologie de Dolbeault pour les courants. Nous prouvons en particulier

\begin{thm}
Soient $X$ une variété analytique complexe de dimension complexe $n$, $E$ un fibré vectoriel holomorphe sur $X$ et $p\geq 1$. Si $r$ est un entier tel que $0\leq r\leq n$,

(i) pour toute $f\in (L^p_{loc})^{r,q}(X,E)$, $q\geq 1$, telle que $\opa f=0$, et tout voisinage $U$ du support de $f$, il existe $g\in (L^p_{loc})^{r,q-1}(X,E)$ à support dans $U$ telle que $f-\opa g\in\ec^{r,q}(X,E)$;

(ii) si $f\in (L^p_{loc})^{r,q}(X,E)$, $q\geq 1$, vérifie $f=\opa S$ pour un courant $S\in\dc'^{r,q-1}(X,E)$, alors pour tout voisinage $U$ du support de $S$ il existe $g\in (W^{1,p}_{loc})^{r,q-1}(X,E)$ à support dans $U$ telle que $f=\opa g$ sur $X$.
\end{thm}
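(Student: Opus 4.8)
The plan is to reduce both assertions to a local solvability statement with a gain of regularity, and then to globalise by a \v{C}ech--Dolbeault homotopy built from a covering contained in $U$, so that the control of supports is automatic. The local ingredient is the Bochner--Martinelli--Koppelman homotopy formula: on a coordinate ball $B\subset\cb^n$ there are operators $K_q$ with $u=\opa K_qu+K_{q+1}\opa u$ for every form $u$, whose kernels are weakly singular enough that $K_q$ improves integrability and, for $1<p<\infty$, maps $(L^p)^{r,q}(B)$ continuously into $(W^{1,p})^{r,q-1}(B')$ for every relatively compact $B'\subset B$ (the gained derivatives being controlled by Calder\'on--Zygmund and Young estimates). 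In particular a $\opa$-closed $f\in (L^p)^{r,q}(B)$ with $q\geq 1$ satisfies $f=\opa(K_qf)$ on $B$, and the same formula holds verbatim for smooth forms and for currents. Thus $\ec^{r,\bullet}$, $(L^p_{loc})^{r,\bullet}$ and $\dc'^{r,\bullet}$ are all fine resolutions of $\Omega^r_X\otimes E$, and the whole content of the theorem is the support-controlled comparison between these three complexes.

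For (i) I would choose open sets with $\supp f\subset V$ and $\overline V\subset U$, a locally finite covering of $V$ by coordinate balls $B_j\subset U$ carrying the operators above, and a subordinate partition of unity $(\chi_j)$ with $\sum_j\chi_j=1$ on $V$. Running the \v{C}ech--Dolbeault diagram chase attached to the double complex $\check C^{\bullet}\big((B_j),(L^p_{loc})^{r,\bullet}\big)$ -- in which the partition of unity furnishes the homotopy for the \v{C}ech differential and the local operators $K_q$ the homotopy for $\opa$ -- produces a relation $f=\opa g+\phi$. Here $\phi$ is the smooth $\opa$-closed form realising a \v{C}ech class with holomorphic, hence smooth, coefficients, and $g$ is assembled from finitely many local solves and multiplications by the $\chi_j$ and $\opa\chi_j$. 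Since $f$ vanishes off $V$ and every operation in the chase (a local solve on $B_j$, a multiplication by $\chi_j$ or $\opa\chi_j$) preserves the property of being supported in $\bigcup_j\overline{B_j}\subset U$, the primitive $g$ is supported in $U$; it lies in $(L^p_{loc})^{r,q-1}(X,E)$ because the local solves do, and $f-\opa g=\phi\in\ec^{r,q}(X,E)$.

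For (ii) I would first remark that $f=\opa S$ forces $f=0$ off $\supp S$, so that $\supp f\subset\supp S$ and $U$ is already a neighbourhood of $\supp f$. The aim now is to keep the derivative gained by the local operators, that is, to avoid passing to a remainder. The exactness hypothesis -- equivalently, the current primitive $S$ supported in $U$ -- makes the obstruction class met along the diagram chase vanish, so that the construction can be closed up by an exact local solve rather than leaving a smooth remainder. Carrying out the same construction with these exact solves, and using that each local operator gains one derivative ($L^p\to W^{1,p}$) while the finitely many multiplications by the cut-offs preserve $W^{1,p}_{loc}$, one obtains $g\in (W^{1,p}_{loc})^{r,q-1}(X,E)$, supported in $U$ by the same in-$U$ bookkeeping, with $\opa g=f$ exactly on $X$.

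The main obstacle is exactly this support-controlled solvability, together with the regularity it must carry. Keeping the support inside $U$ is arranged by never solving a global equation and only combining local solves -- each supported in a ball of $U$ -- through the partition of unity; the delicate points are, in (i), that the assembled remainder be genuinely smooth, and, in (ii), that the exactness let the construction be closed up by an exact solve without the approximation that would cost the derivative gained by Bochner--Martinelli--Koppelman. It is precisely here that the reflexivity of the $L^p$ scale for $1<p<\infty$, and the Serre duality it makes available, enter, yielding the closed range of $\opa$ with the supports prescribed; this is the technical heart on which both statements rest.
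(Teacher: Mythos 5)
There is a genuine gap in the support bookkeeping of your \v{C}ech--Dolbeault chase for (i). Your covering consists of balls $B_j\subset U$ covering only $V$, and your partition of unity satisfies $\sum_j\chi_j=1$ only on $V$; the reconstruction step of the diagram chase therefore yields $f-\opa g=\phi$ with $\phi$ smooth only where $\sum_j\chi_j=1$. On $X\setminus V$ you merely know $f=0$, so there $f-\opa g=-\opa g$, and nothing in your construction forces $\opa g$ to be smooth across the boundary of $\supp g$. To run the chase over all of $X$ you must adjoin the open set $X\setminus\supp f$ to the covering, and then the cross terms carrying its cutoff have to be checked not to leak outside $U$ (they do stay in $U$, but only because each such term also carries a $\chi_j$ or $\opa\chi_j$ supported in some $B_j$ --- this is exactly the point that needs to be written out and that you pass over). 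The paper sidesteps this entirely: it first applies the already established, support-free Dolbeault isomorphism to get $f-\opa g_0=u$ smooth on $X$, observes that $u=-\opa g_0$ is exact by an $L^p_{loc}$ form on $X\setminus\supp f$, hence by injectivity of $\Phi$ on that open manifold is $\opa v$ with $v$ smooth there, applies the isomorphism a second time on $X\setminus\ol V$, and glues with two cutoffs $\chi_0,\chi_1$.

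For (ii) the gap is more serious. The sentence ``the exactness hypothesis makes the obstruction class met along the diagram chase vanish, so that the construction can be closed up by an exact local solve'' assumes what is to be proved: locally the obstruction always vanishes (Bochner--Martinelli solves on every ball, whether or not $f$ is globally exact), and the global obstruction lives in $H^{r,q}(X,E)$, whose vanishing on the class of $f$ only yields a primitive with no control of support. You never explain how the current primitive $S$ is converted into a $W^{1,p}_{loc}$ primitive supported in $U$. The closing appeal to reflexivity of $L^p$, Serre duality and closed range of $\opa$ ``with the supports prescribed'' is both unavailable here (no geometric hypothesis is made on $X$ or $U$; closed range appears in the paper only in Section 2 under strict $s$-convexity) and unnecessary. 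The paper's actual mechanism is: interior $W^{1,p}$ regularity of the canonical solution (Beals--Greiner--Stanton) gives a global $g_0\in (W^{1,p}_{loc})^{r,q-1}(X,E)$ with $\opa g_0=f$; then $S-g_0$ is a $\opa$-closed current, the surjectivity of $H^{r,q-1}(X,E)\to H^{r,q-1}_{cour}(X,E)$ produces a smooth $\opa$-closed $u$ and a current $T$ with $S-g_0+\opa T=u$, so $u+g_0=\opa T$ on $X\setminus\supp S$ is there $\opa g_1$ with $g_1\in W^{1,p}_{loc}$ by interior regularity again, and $g=g_0+u-\opa(\chi g_1)$ is supported in $U$ and satisfies $\opa g=f$. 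Your local ingredient (the BMK gain $L^p\to W^{1,p}$ for $1<p<\infty$) is the right regularity input, but the two-step correction on the complement of the support, which is what actually produces the support control, is missing from your argument.
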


On obtient alors que si $f\in (L^p)^{r,q}(X,E)$, $0\leq r\leq n$, $q\geq 1$, est une forme différentielle à support compact dans $X$, qui est exacte au sens des courants, il existe $g\in (W^{1,p})^{r,q-1}(X,E)$ à support compact dans $X$ telle que $f=\opa g$ sur $X$. Cela répond à une question posée par G. Tomassini (voir \cite{AmMo} et \cite{Am}).

Il en résulte également un théorème d'extension du type Hartogs-Bochner-Severi pour les fonctions CR  de régularité Sobolev $W^{1-\frac{1}{p},p}$.

Dans la seconde partie nous définissons plusieurs complexes de Cauchy-Riemann pour $L^p$. Nous développons tout d'abord la théorie d'Andreotti-Grauert dans le cadre $L^p$, une première avancée dans ce domaine est due à N. Kerzman \cite{Ke}. Pour ce faire, nous nous appuyons sur des résultats de compacité résultant des estimations Sobolev jusqu'au bord dues à R. Beals, P. Greiner et N. Stanton \cite{BGS}.

Ensuite nous précisons les propriétés de dualité reliant les différents complexes $L^p$ et en appliquant des résultats abstraits nous développons la dualité de Serre dans ce cadre. Cela nous permet d'étudier la résolution du $\opa$ à support exact. Plus précisément nous considérons le problème de Cauchy faible suivant~:
Soit $X$ une variété analytique complexe de dimension complexe $n\geq 2$, $E$ un fibré vectoriel holomorphe sur $X$ et $D$ un domaine relativement compact dans $X$. Etant donnée une $(r,q)$-forme $f$ à coefficients dans $L^p(X,E)$, avec $0\leq r\leq n$ et $1\leq q\leq n$, telle que
$${\rm supp}~f\subset\ol D \quad {\rm et}\quad \opa f=0~{\rm au~sens~des~distributions~dans~}X,$$
existe-t-il une $(r,q-1)$-forme $g$  à coefficients dans $L^p(X)$ telle que
$${\rm supp}~g\subset\ol D \quad {\rm et}\quad \opa g=f~{\rm au~sens~des~distributions~dans~}X~?$$
Notons que lorsque $q=n$, la condition $\opa f=0$ dans l'énoncé du problème de Cauchy est vide et qu'une condition de moment est nécessaire pour avoir une solution. Des conditions cohomologiques puis géométriques sur $X$ et $D$ sont données pour obtenir une réponse affirmative (voir les Théorèmes \ref{n} et \ref{infn-1} et les corollaires qui suivent). La théorie développée ici donne un bien meilleur contrôle du support de la solution en fonction du support de la donnée par rapport aux résultats démontrés dans la première partie.

Finalement nous pouvons déduire de l'étude du problème de Cauchy des résultats d'extensions pour les formes différentielles CR, de résolution  de l'équation de Cauchy-Riemann dans des anneaux et des théoremes de séparation pour certains groupes de cohomologie. 

Je voudrais remercier ici Eric Amar pour les nombreux échanges que
nous avons eu lors de la préparation de cet article.

\section{Cohomologie $L^p_{loc}$ dans les variétés complexes}\label{s1}

Soient $X$ une variété analytique complexe de dimension complexe $n$ et $E$ un fibré vectoriel holomorphe sur $X$, on note $\ec^{r,q}(X,E)$, $0\leq r,q\leq n$, l'espace de Fréchet des $(r,q)$-formes différentielles à valeurs dans $E$ de classe $\ci$ sur $X$ muni de la topologie de la convergence uniforme des formes et de toutes leurs dérivées sur tout compact de $X$ et $\dc^{r,q}(X,E)$, $0\leq r,q\leq n$, le sous espace de $\ec^{r,q}(X,E)$ des formes à support compact muni de sa topologie usuelle de limite inductive de Fréchet. On désigne par $\opa$ l'opérateur de Cauchy-Riemann associé à la structure complexe de $X$ et pour tout $0\leq r\leq n$ on considère les groupes de cohomologie $H^{r,q}(X,E)$, $0\leq q\leq n$, du complexe $(\ec^{r,\bullet}(X,E),\opa)$ et les groupes de cohomologie $H_c^{r,q}(X,E)$, $0\leq q\leq n$, du complexe $(\dc^{r,\bullet}(X,E),\opa)$. Il s'agit des groupes de cohomologie de Dolbeault et des groupes de cohomologie de Dolbeault à support compact de $X$ à valeurs dans $E$, ils sont définis par
$$H^{r,q}(X,E)=\frac{\ec^{r,q}(X,E)\cap\ker\opa}{\opa\ec^{r,q-1}(X,E)}\quad {\rm et}\quad H_c^{r,q}(X,E)=\frac{\dc^{r,q}(X,E)\cap\ker\opa}{\opa\dc^{r,q-1}(X,E)}$$
et munis de la topologie quotient correspondante.

Si $\dc'^{r,q}(X,E)$, $0\leq r,q\leq n$, désigne l'espace des courants de bidegré $(r,q)$ à valeurs dans $E$ sur $X$, on peut étendre l'opérateur $\opa$ à cet espace en posant pour $T\in\dc'^{r,q}(X,E)$, $<\opa T,\varphi>=(-1)^{p+q+1}<T,\opa\varphi>$ pour toute forme $\varphi\in\dc^{n-r,n-q-1}(X,E^*)$. On obtient ainsi un nouveau complexe $(\dc'^{r,\bullet}(X,E),\opa)$ dont les groupes de  cohomologie sont notés $H_{cour}^{r,q}(X,E)$.

Pour $0\leq r\leq n$, notons $\Omega^r_E$ le faisceau des $(r,0)$-formes holomorphes sur $X$ à valeurs dans $E$. Il résulte du lemme de Dolbeault et de l'isomorphisme de de Rham-Weil (voir par exemple le Théorème 2.14 dans \cite{HeLe2}) que les groupes de cohomologie du faisceau  $\Omega^p_E$ et les groupes de cohomologie de Dolbeault  sont isomorphes, i.e.
$$H^q(X,\Omega^r_E)\sim H^{r,q}(X,E).$$

Le lemme de Dolbeault étant également valide pour les courants (voir par exemple le Théorème 2.13 dans \cite{HeLe2}), les groupes de cohomologie du faisceau  $\Omega^p_E$ et les groupes de cohomologie au sens des courants sont aussi isomorphes, i.e.
$$H^q(X,\Omega^r_E)\sim H_{cour}^{r,q}(X,E).$$

Dans cette section, nous allons généraliser ces résultats à la cohomologie $L^p_{loc}$, $p\geq 1$. Pour faciliter la lecture de ce travail nous donnons en détail la plupart des démonstrations même lorsqu'elles sont similaires à celles de \cite{HeLe2}.

Considérons pour $p\geq 1$, le sous espace $(L^p_{loc})^{r,q}(X,E)$, $0\leq r,q\leq n$, de $\dc'^{r,q}(X,E)$ des $(r,q)$-formes différentielles à valeurs dans $E$ à coefficients dans $L^p_{loc}$ muni de la topologie de la convergence $L^p$ sur tout compact de $X$. On définit l'opérateur $\opa$ de $(L^p_{loc})^{r,q}(X,E)$ dans $(L^p_{loc})^{r,q+1}(X,E)$ de la manière suivante~: si $f\in (L^p_{loc})^{r,q}(X,E)$ et $g\in (L^p_{loc})^{r,q+1}(X,E)$, on a $\opa f=g$ si $\opa f$ et $g$ coïncident au sens des courants et son domaine de définition $Dom(\opa)$ est l'ensemble des $f\in (L^p_{loc})^{r,q}(X,E)$ telles que $\opa f\in (L^p_{loc})^{r,q+1}(X,E)$. Notons que puisque $\opa\circ\opa=0$, si $f\in Dom(\opa)$ alors $\opa f\in Dom(\opa)$. On obtient ainsi un complexe d'opérateurs non bornés $((L^p_{loc})^{r,\bullet}(X,E),\opa)$.

Le lemme de Dolbeault est encore valable pour les formes à coefficients dans $L^p_{loc}$. En effet, pour tout $x\in X$, si $U\subset\subset V$ sont deux voisinages de $x$ contenus dans un ouvert de carte de $X$ qui est aussi un domaine de trivialisation de $E$, $\chi$ une fonction à valeurs positives, de classe $\ci$ sur $X$, à support compact contenu dans $V$ et identiquement égale à $1$ sur $U$ et si $f\in (L^p_{loc})^{r,q}(X,E)$, $q\geq 1$, après avoir identifié $U$ et $V$ avec des ouverts de $\cb^n$ par un choix de coordonnées, on obtient, pour tout $z\in V$, par la formule de Bochner-Martinelli-Koppelman
$$(-1)^{r+q} (\chi f)(z)=\opa\int_{\zeta\in V} (\chi f)(\zeta)\wedge B(z,\zeta)-\int_{\zeta\in V} \opa(\chi f)(\zeta)\wedge B(z,\zeta),$$
où $B(z,\zeta)$ désigne le noyau de Bochner-Martinelli. Lorsque la forme différentielle $f$ est $\opa$-fermée alors
$$(-1)^{r+q} (\chi f)(z)=\opa\int_{\zeta\in V} (\chi f)(\zeta)\wedge B(z,\zeta)-\int_{\zeta\in V} \opa\chi(\zeta)\wedge f(\zeta)\wedge B(z,\zeta).$$
Rappelons que le noyau de Bochner-Martinelli vérifie $B=\tau^*k_{BM}$, où $k_{BM}$ est une forme différentielle singulière à l'origine, à coefficients localement intégrables sur $\cb^n$ et $\tau$ l'application qui à $(z,\zeta)$ associe $\zeta-z$ (cf. \cite{Lalivre}, chapitre III). Par conséquent, grâce aux propriétés de la convolution, la forme différentielle $\wt B (\chi f)(z)=\int_{\zeta\in V} (\chi f)(\zeta)\wedge B(z,\zeta)$ est dans $(L^p_{loc})^{r,q-1}(X,E)$.

La forme différentielle $\int_{\zeta\in V} \opa\chi(\zeta)\wedge f(\zeta)\wedge B(z,\zeta)$ est $\opa$-fermée et de classe $\ci$ sur $U$, car les singularités de $B$ sont concentrées sur la diagonale et le support de $\opa\chi$ contenu dans $V\setminus U$. Il résulte du lemme de Dolbeault pour les formes de classe $\ci$ qu'il existe un voisinage $U'\subset U$ de $x$ et une forme différentielle $g$ de classe $\ci$ sur $U'$ telle que $\int_{\zeta\in V} \opa\chi(\zeta)\wedge f(\zeta)\wedge B(z,\zeta)=\opa g$ sur $U'$. Par conséquent on a pour tout $z\in U'$
$$(-1)^{r+q}f(z)= \opa \big(\wt B (\chi f)(z)-g\big)$$
et la forme différentielle $\wt B (\chi f)(z)-g$ est à coefficients dans $L^p_{loc}$.

Le complexe $((L^p_{loc})^{r,\bullet}(X,E),\opa)$ est donc une résolution du faisceau $\Omega^r_E$ et les groupes de cohomologie $H_{L^P_{loc}}^{r,q}(X,E)$ de ce complexe sont isomorphes aux groupes de cohomologie du faisceau  $\Omega^p_E$,  i.e.
$$H^q(X,\Omega^r_E)\sim H_{L^P_{loc}}^{r,q}(X,E).$$

On en déduit aisément la proposition suivante~:

\begin{prop}\label{isom}
Soient $X$ une variété analytique complexe de dimension complexe $n$ et $E$ un fibré vectoriel holomorphe sur $X$. Pour tout $p\geq 1$ et $0\leq r,q\leq n$, les applications naturelles
$$\Phi~:~H^{r,q}(X,E)\to H_{L^P_{loc}}^{r,q}(X,E)\quad {\rm et }\quad \Psi~:~H_{L^P_{loc}}^{r,q}(X,E)\to H_{cour}^{r,q}(X,E)$$
sont des isomorphismes, appelés isomorphismes de Dolbeault. En particulier

(i) Pour toute $f\in (L^p_{loc})^{r,q}(X,E)$, $q\geq 1$, telle que $\opa f=0$, il existe $g\in (L^p_{loc})^{r,q-1}(X,E)$ telle que $f-\opa g\in\ec^{r,q}(X,E)$.

(ii) Si $f\in (L^p_{loc})^{r,q}(X,E)$, $q\geq 1$, vérifie $f=\opa S$ pour un courant $S\in\dc'^{r,q-1}(X,E)$, il existe $g\in (L^p_{loc})^{r,q-1}(X,E)$ telle que $f=\opa g$ sur $X$.

\end{prop}

Notons que si la variété $X$ est compacte l'espace des formes à coefficients dans $L^p_{loc}(X,E)$ coïncide avec l'espace des formes à coefficients dans $L^p(X,E)$ et par conséquent il résulte de la Proposition \ref{isom} que
$$H^q(X,\Omega^r_E)\sim H^{r,q}(X,E)\sim H_{L^P}^{r,q}(X,E)\sim H_{cour}^{r,q}(X,E).$$

Le point (i) de la Proposition \ref{isom} peut être précisé de la manière suivante~:

\begin{prop}\label{supp}
Soient $X$ une variété analytique complexe de dimension complexe $n$, $E$ un fibré vectoriel holomorphe sur $X$ et $p\geq 1$. Pour toute $f\in (L^p_{loc})^{r,q}(X,E)$, $q\geq 1$, telle que $\opa f=0$, et tout voisinage $U$ du support de $f$, il existe $g\in (L^p_{loc})^{r,q-1}(X,E)$ à support dans $U$ telle que $f-\opa g\in\ec^{r,q}(X,E)$.
\end{prop}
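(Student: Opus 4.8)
The plan is to repair the support in Proposition \ref{isom}(i) by a cut-off, the whole difficulty being the commutator that the cut-off produces. First I would invoke Proposition \ref{isom}(i) to obtain $g_0\in(L^p_{loc})^{r,q-1}(X,E)$ with $h_0:=f-\opa g_0\in\ec^{r,q}(X,E)$, choose $\chi\in\ci(X)$ equal to $1$ on a neighbourhood of $\supp f$ with $\supp\chi\subset U$, and test the naive candidate $\chi g_0$. Using $\opa g_0=f-h_0$ together with $(1-\chi)f=0$ one finds
$$f-\opa(\chi g_0)=\chi h_0-\opa\chi\wedge g_0.$$
Here $\chi h_0$ is smooth and $\supp(\chi g_0)\subset U$, so the statement would follow at once but for the commutator $\opa\chi\wedge g_0$, which a priori is only of class $L^p_{loc}$. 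This term is the main obstacle.

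The key observation is that $\supp\opa\chi$ sits inside $W:=X\setminus\supp f$, where $\opa g_0=-h_0$ is smooth, so the commutator is governed by the behaviour of $g_0$ on $W$. There I would show that $g_0$ can be written as $g_0=\opa c+w$ with $c\in(L^p_{loc})^{r,q-2}(W)$ and $w\in\ec^{r,q-1}(W)$. Indeed $h_0=-\opa g_0$ is $L^p_{loc}$-$\opa$-exact on $W$, hence, by the injectivity of the Dolbeault isomorphism $\Phi$ of Proposition \ref{isom} applied to the complex manifold $W$, it is already $\ci$-$\opa$-exact, say $h_0=\opa v$ with $v\in\ec^{r,q-1}(W)$. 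Then $s:=g_0+v$ is $\opa$-closed and of class $L^p_{loc}$ on $W$; for $q\geq 2$ a further application of Proposition \ref{isom}(i) on $W$ produces $c$ with $s-\opa c$ smooth, while for $q=1$ the form $s$ is a $\opa$-closed $(r,0)$-form, hence holomorphic and smooth and one takes $c=0$. In either case $g_0=s-v=\opa c+w$ with $w$ smooth, as claimed.

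It then remains to feed this decomposition back into the commutator. Since $\opa\chi$ vanishes near $\supp f$, the products $\opa\chi\wedge c$ and $\opa\chi\wedge w$ extend by zero to forms on all of $X$, supported in $\supp\opa\chi\subset U$; and $\opa\opa\chi=0$ gives the identity $\opa\chi\wedge\opa c=-\opa(\opa\chi\wedge c)$. Hence on $\supp\opa\chi$ one has $\opa\chi\wedge g_0=-\opa(\opa\chi\wedge c)+\opa\chi\wedge w$, and the $\opa$-exact part can be transferred into $\opa g$ by setting
$$g:=\chi g_0+\opa\chi\wedge c\in(L^p_{loc})^{r,q-1}(X,E),$$
which satisfies $\supp g\subset\supp\chi\subset U$ and
$$f-\opa g=\chi h_0-\opa\chi\wedge w\in\ec^{r,q}(X,E).$$
The only genuinely delicate step is the regularisation of $g_0$ on $X\setminus\supp f$ in the middle paragraph; once $g_0$ is put in the form $\opa c+w$ there, the obstructing commutator differs from a smooth form by an exact term that is harmlessly absorbed into $\opa g$, and the support bookkeeping is then automatic.
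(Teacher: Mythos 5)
Your proof is correct and follows essentially the same route as the paper's: both first apply Proposition \ref{isom}(i) on $X$, then use the injectivity of $\Phi$ on the complement of $\supp f$ together with a second application of Proposition \ref{isom}(i) one degree lower to write $g_0$ there as an exact term plus a smooth one, and finally patch with cut-offs. The only difference is cosmetic --- you multiply $g_0$ by a cut-off equal to $1$ near $\supp f$ and absorb the commutator term $\opa\chi\wedge\opa c$ into the solution, whereas the paper subtracts the smooth-plus-exact representation of $g_0$ outside a shrunken neighbourhood $V$ --- and your explicit treatment of the degenerate case $q=1$ is a welcome precision.
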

\begin{proof}[Démonstration]
Choisissons un voisinage $V$ du support de $f$ tel que $\ol V\subset U$ et soient $\chi_0$ et $\chi_1$ des fonctions de classe $\ci$ sur $X$ telles que $\chi_0=1$ sur un voisinage de $X\setminus V$ et $\chi_0=0$ au voisinage du support de $f$, $\chi_1=1$ au voisinage de $X\setminus U$ et $\chi_1=0$ sur un voisinage de $\ol V$.

Puisque $f$ est $\opa$-fermée, il résulte du (i) de la Proposition \ref{isom} qu'il existe une forme $g_0\in (L^p_{loc})^{r,q-1}(X,E)$ telle que $f-\opa g_0=u\in\ec^{r,q}(X,E)$. Alors $u=-\opa g_0$ sur $X\setminus {\rm supp}f$ et comme le morphisme $\Phi$ de la Proposition \ref{isom} est injectif, il existe une forme  $v$ de classe $\ci$ sur $X\setminus {\rm supp}f$ telle que $u=\opa v$ sur $X\setminus {\rm supp}f$. Sur $X\setminus \ol V$ on a alors $\opa(g_0+\chi_0 v)=0$ et l'assertion (i) de la Proposition \ref{isom} appliquée à la variété $X\setminus \ol V$ implique alors l'existence d'une forme $g_1\in (L^p_{loc})^{r,q-1}(X\setminus \ol V,E)$ telle que $g_0+\chi_0 v-\opa g_1=w\in\ec^{r,q}(X\setminus \ol V,E)$. La forme $g=g_0+\chi_0 v-\chi_1 w-\opa (\chi_1 g_1)$ est à coefficients dans $L^p_{loc}$ et à support dans $U$, de plus $$f-\opa g=f-\opa (g_0+\chi_0 v-\chi_1 w)=u-\opa (\chi_0 v-\chi_1 w)$$ est de classe $\ci$ sur $X$.
\end{proof}

Considérons les sous espaces $(L^p_c)^{r,q}(X,E)$ de $(L^p_{loc})^{r,q}(X,E)$, $0\leq r,q\leq n$, des $(r,q)$-formes à coefficients dans $L^p$ et à support compact, on peut leur associés le complexe $((L^p_c)^{r,\bullet}(X,E),\opa)$. Les groupes de cohomologie de ce complexe sont définis par
$$H_{c,L^p}^{r,q}(X,E)=\frac{(L^p_c)^{r,q}(X,E)\cap\ker\opa}{\opa(L^p_c)^{r,q-1}(X,E)}.$$

\begin{cor}\label{compact}
Soient $X$ une variété analytique complexe, connexe, non compacte, de dimension complexe $n$ et $E$ un fibré vectoriel holomorphe sur $X$. Pour tout $p\geq 1$ et $0\leq r,q\leq n$, l'application naturelle
$$\Phi_c~:~H_c^{r,q}(X,E)\to H_{c,L^P}^{r,q}(X,E)$$
est surjective.
\end{cor}
\begin{proof}[Démonstration]
Si $q=0$, $H_c^{r,0}(X,E)= H_{c,L^P}^{r,0}(X,E)=0$, car $X$ est connexe, et le corollaire est vrai, il reste alors à
prouver que pour toute $f\in (L^p_c)^{r,q}(X,E)$, $q\geq 1$, telle que $\opa f=0$, il existe $g\in (L^p_c)^{r,q-1}(X,E)$ telle que $f-\opa g\in\dc^{r,q}(X,E)$. Soient $f\in (L^p_c)^{r,q}(X,E)$, $q\geq 1$, telle que $\opa f=0$, et $U\subset\subset X$ un voisinage relativement compact du support de $f$. Appliquons la Proposition \ref{supp} à $f$ et $U$, il existe donc $g\in (L^p_{loc})^{r,q-1}(X,E)$ à support dans $U$ telle que $f-\opa g\in\ec^{r,q}(X,E)$. Mais puisque $U$ est relativement compact dans $X$, le support de $g$ est compact et $g\in (L^p_c)^{r,q-1}(X,E)$, de plus le support de $f-\opa g$ est également contenu dans $U$, donc compact et $f-\opa g\in\dc^{r,q}(X,E)$.
\end{proof}

Le Corollaire \ref{compact} permet de répondre à la question naturelle suivante~: si la variété $X$ satisfait $H_c^{r,q}(X,E)=0$, $q\geq 1$, et si $f\in (L^p)^{r,q}(X,E)$ est une forme $\opa$-fermée à support compact dans $X$, existe-t-il une forme $g\in (L^p)^{r,q}(X,E)$ à support compact dans $X$ telle que $\opa f=g$ sur $X$ ? En effet, si $H_c^{r,q}(X,E)=0$, la surjectivité de l'application $\Phi_c$ implique que $H_{c,L^P}^{r,q}(X,E)=0$ et la réponse est donc positive par définition des groupes de cohomologie $H_{c,L^P}^{r,q}(X,E)$.

Cette question, originellement posée par G. Tomassini, a été étudiée par E. Amar et S. Mongodi \cite{AmMo} lorsque $X=\cb^n$ et par E. Amar \cite{Am} lorsque $X$ est une variété de Stein.

Nous pouvons également préciser l'assertion (ii) de la Proposition \ref{isom} en termes de régularité et de support.

\begin{prop}\label{reg}
Soient $X$ une variété analytique complexe de dimension complexe $n$, $E$ un fibré vectoriel holomorphe sur $X$ et $p\geq 1$.
Si $f\in (L^p_{loc})^{r,q}(X,E)$, $q\geq 1$, vérifie $f=\opa S$ pour un courant $S\in\dc'^{r,q-1}(X,E)$, alors pour tout voisinage $U$ du support de $S$ il existe $g\in (W^{1,p}_{loc})^{r,q-1}(X,E)$ à support dans $U$ telle que $f=\opa g$ sur $X$.
\end{prop}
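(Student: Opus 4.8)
The plan is to exploit the current primitive $S$ directly, regularizing it through a homotopy formula for $\opa$ so as to produce in one stroke a form solution of the right Sobolev class with support confined to $U$. The conceptual model is the local Bochner--Martinelli--Koppelman identity already used at the start of the section: on a coordinate ball one has, up to sign, $\chi T=\opa\wt B(\chi T)+\wt B(\opa(\chi T))$, which expresses any current $T$ as $\opa$ of a regularized term plus a regularization of its own $\opa$. Applying such a formula to $T=S$ and using $\opa S=f$ will turn the equation $f=\opa S$ into $f=\opa g$ for an explicit $g$ built from $S$ and $f$, the point being that the operator hitting $f$ is smoothing of order one while the one hitting $S$ is smoothing of infinite order.

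The analytic heart is a regularity statement for $\wt B$ refining the computation above: the operator $\wt B(\cdot)=\int_{\zeta}(\cdot)(\zeta)\wedge B(\,\cdot\,,\zeta)$ maps $(L^p_{loc})^{r,q}$ into $(W^{1,p}_{loc})^{r,q-1}$, not merely into $(L^p_{loc})^{r,q-1}$. Indeed $B=\tau^*k_{BM}$ with $k_{BM}$ homogeneous of degree $-(2n-1)$ on $\cb^n\simeq\rb^{2n}$, so $\wt B$ acts, up to the diffeomorphism $\tau$, as convolution against $k_{BM}$; its first derivatives are convolutions against the components of $\partial k_{BM}$, homogeneous of degree $-2n$ and of Calder\'on--Zygmund type. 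Hence $\partial\wt B$ is bounded on $L^p$ for $1<p<+\infty$ and $\wt B$ gains exactly one derivative. In particular the local Dolbeault lemma proved above, and with it the solutions furnished by Propositions \ref{isom} and \ref{supp}, may be taken in $W^{1,p}_{loc}$: one only has to note that every cutoff manipulation $\opa(\chi g)=\opa\chi\wedge g+\chi\,\opa g$ preserves the Sobolev class, since $\opa g$ is prescribed by the equation and no genuine derivative of $g$ is ever taken.

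With this in hand I would globalize the homotopy. Fixing a locally finite cover of a neighborhood of $\supp S$ by coordinate balls contained in $U$, a subordinate partition of unity, and the local Koppelman operators, one patches them into global operators $A$ and $R$ on currents satisfying $\mathrm{Id}-R=\opa A+A\opa$, where $R$ has smooth kernel (the diagonal singularities of the local pieces cancel) hence is smoothing, while $A$ is smoothing of order one, so that $A$ sends $(L^p_{loc})^{r,q}$ into $(W^{1,p}_{loc})^{r,q-1}$ by the previous paragraph. Choosing kernels supported close enough to the diagonal forces $\supp(AT)$ and $\supp(RT)$ into any prescribed neighborhood of $\supp T$. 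Applying the identity to $S$ and using $\opa S=f$ gives
$$g:=S-\opa(AS)=RS+A\opa S=RS+Af,$$
so that $\opa g=\opa S=f$. Here $RS\in\ec^{r,q-1}(X,E)$ and $Af\in(W^{1,p}_{loc})^{r,q-1}(X,E)$, whence $g\in(W^{1,p}_{loc})^{r,q-1}(X,E)$; and since $\supp f\subset\supp S$, both $\supp(RS)$ and $\supp(Af)$ lie in a neighborhood of $\supp S$ which, for a fine enough construction, is contained in $U$.

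The main obstacle is precisely this globalization: assembling the local Koppelman operators into a genuine $\opa$-homotopy whose remainder $R$ is truly smoothing, while preserving both the order-one gain of $A$ (so that $W^{1,p}_{loc}$ regularity survives) and the localization of supports inside $U$ --- the latter requiring, when $\supp S$ is non-compact, a kernel whose support radius shrinks as one approaches $\partial U$ or infinity; the endpoint $p=1$, where the Calder\'on--Zygmund bound fails, also demands separate care. A more pedestrian route avoids building a global operator: upgrade Proposition \ref{supp} to $W^{1,p}_{loc}$ as above to get $g_1\in(W^{1,p}_{loc})^{r,q-1}(X,E)$ with $\supp g_1\subset U$ and $f-\opa g_1=u\in\ec^{r,q}(X,E)$; then $u=\opa(S-g_1)$ is a smooth, $\opa$-closed form, exact in the sense of currents and with $\supp u\subset U$, so the problem reduces to the smooth analogue of the present statement, which the same homotopy resolves by a smooth $g_2$ with $\supp g_2\subset U$ and $\opa g_2=u$; then $g=g_1+g_2$ answers the question.
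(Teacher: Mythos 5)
Your primary route has a genuine gap at its centerpiece: the global homotopy $\mathrm{Id}-R=\opa A+A\opa$ with $R$ infinitely smoothing. The naive patching of local Bochner--Martinelli--Koppelman operators does not make the diagonal singularities cancel: writing $\chi_i T=\opa\wt B_i(\chi_i T)+\wt B_i(\opa\chi_i\wedge T)+\wt B_i(\chi_i\,\opa T)$ and summing over a partition of unity, the remainder contains the terms $\sum_i\wt B_i(\opa\chi_i\wedge T)$, whose kernels are still singular on the diagonal over $\supp\opa\chi_i$; they gain one derivative, not infinitely many. Since $S$ is an arbitrary current, of unbounded local order, a remainder gaining only finitely many derivatives is useless here --- you need $RS\in\ec^{r,q-1}$ (or at least $W^{1,p}_{loc}$) for \emph{every} current $S$, hence a genuinely smooth kernel, and neither the naive patching nor its iterates produce one; truncating the kernels near the diagonal to control supports introduces yet further error terms. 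You flag this as ``the main obstacle'' but do not resolve it, and your fallback route sends its own last step (a smooth $g_2$ with $\opa g_2=u$ and $\supp g_2\subset U$) back to this same unconstructed homotopy. Add to this that the statement allows $p=1$, where the Calder\'on--Zygmund bound for the derivatives of $\wt B$ fails, a point you acknowledge without treating.

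The paper sidesteps any global kernel. It first produces $g_0\in(W^{1,p}_{loc})^{r,q-1}(X,E)$ with $f=\opa g_0$ on all of $X$, with no support control, by interior Sobolev regularity of the canonical solution (Theorem 4(b) of \cite{BGS}); it then compares $S$ with $g_0$: the current $S-g_0$ is $\opa$-closed, so by surjectivity of the natural map onto $H^{r,q-1}_{cour}(X,E)$ one writes $S-g_0+\opa T=u$ with $u$ smooth $\opa$-closed; on $X\setminus\supp S$ the form $u+g_0=\opa T$ is exact as a current, hence (interior regularity again) equals $\opa g_1$ with $g_1\in W^{1,p}_{loc}$ there, and $g=g_0+u-\opa(\chi g_1)$, with $\chi=0$ near $\supp S$ and $\chi=1$ near $X\setminus U$, concludes. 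Your ``pedestrian'' fallback --- solve with support in $U$ modulo a smooth error via an upgraded Proposition \ref{supp}, then kill the smooth error --- is a legitimate reorganization of the same two ingredients (a one-derivative interior gain for local solutions, the Dolbeault isomorphism for currents plus a cutoff), and it would close if you established the smooth analogue by the cohomological cutoff argument just described rather than by the homotopy operator; as written it does not.
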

\begin{proof}[Démonstration]
Soient $f\in (L^p_{loc})^{r,q}(X,E)$, $q\geq 1$, vérifiant $f=\opa S$ pour un courant $S\in\dc'^{r,q-1}(X,E)$ et $U$ un voisinage du support de $S$. Grâce à régularité intérieure de la solution canonique de l'équation de Cauchy-Riemann (cf. Théorème 4 (b) dans \cite{BGS}, car à l'intérieur du domaine tous les champs de vecteurs sont admissibles), il existe $g_0\in (W^{1,p}_{loc})^{r,q-1}(X,E)$ telle que $f=\opa g_0$ sur $X$. Le courant $S-g_0$ étant $\opa$-fermé, la surjectivité de l'application naturelle $$H^{r,q}(X,E)\to H_{cour}^{r,q}(X,E)$$ implique qu'il existe un courant $T$ sur $X$ et une forme $u$ $\opa$-fermée, de classe $\ci$ sur $X$ telle que $S-g_0+\opa T=u$. On a alors $\opa T=u+g_0$ sur $X\setminus {\rm supp}~S$. Une fois encore la régularité intérieure implique qu'il existe $g_1\in (W^{1,p}_{loc})^{r,q-1}(X\setminus {\rm supp}~S,E)$ telle que $u+g_0=\opa g_1$ sur $X\setminus {\rm supp}~S$. Choisissons une fonction $\chi$ de classe $\ci$ sur $X$ telle que $\chi=0$ au voisinage du support de $S$ et $\chi=1$ sur un voisinage de $X\setminus U$. Alors la forme $g=g_0+u-\opa (\chi g_1)$ est dans $(W^{1,p}_{loc})^{r,q-1}(X,E)$, elle vérifie  $\opa g=\opa (g_0+u)=\opa S=f$ et son support est contenu dans $U$.
\end{proof}

La Proposition \ref{reg} permet d'affirmer que dans une variété analytique complexe non compacte quelconque, si on sait qu'une forme $f\in (L^p_c)^{r,q}(X,E)$, $q\geq 1$, est exacte au sens des courants à support compact, alors il existe $g\in (W^{1,p}_c)^{r,q-1}(X,E)$ telle que $f=\opa g$ sur $X$. De plus on peut choisir $g$ avec un support arbitrairement proche de celui de la solution au sens des courants.  Notons qu'une condition nécessaire sur $f$ pour être exacte au sens des courants à support compact est d'être orthogonale aux formes $\opa$-fermées de classe $\ci$ sur $X$, plus précisément $f$ doit satisfaire
$\int_X f\wedge\varphi=0$ pour toute forme $\varphi\in\ec^{n-r,n-q}(X,E^*)$ vérifiant $\opa\varphi=0$.

\begin{cor}\label{compact2}
Soient $X$ une variété analytique complexe, connexe, non compacte, de dimension complexe $n$ et $E$ un fibré vectoriel holomorphe sur $X$. Pour tout $p\geq 1$ et $0\leq r,q\leq n$, l'application naturelle
$$\Psi_c~:~H_{c,L^P}^{r,q}(X,E)\to H_{c,cour}^{r,q}(X,E)$$
est injective.
\end{cor}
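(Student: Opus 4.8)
Le plan est de d\'eduire l'injectivit\'e de $\Psi_c$ directement de la Proposition \ref{reg}, en exploitant le contr\^ole du support qu'elle fournit. Dire que $\Psi_c$ est injective revient \`a montrer que toute forme $f\in (L^p_c)^{r,q}(X,E)$ qui est $\opa$-ferm\'ee et dont la classe s'annule dans $H_{c,cour}^{r,q}(X,E)$ est d\'ej\`a $\opa$-exacte dans le complexe $((L^p_c)^{r,\bullet}(X,E),\opa)$. Or l'annulation de cette classe signifie exactement qu'il existe un courant $S\in\dc'^{r,q-1}(X,E)$ \`a support compact tel que $f=\opa S$. Il s'agit donc de produire, \`a partir d'un tel $S$, une forme $g\in (L^p_c)^{r,q-1}(X,E)$ v\'erifiant $f=\opa g$.

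Je commencerais par \'ecarter le cas $q=0$ comme dans la d\'emonstration du Corollaire \ref{compact}~: une $(r,0)$-forme $\opa$-ferm\'ee \`a support compact est une section holomorphe de $\Omega^r_E$ \`a support compact, donc identiquement nulle puisque $X$ est connexe et non compacte~; ainsi $H_{c,L^P}^{r,0}(X,E)=0$ et l'injectivit\'e de $\Psi_c$ y est triviale. Pour $q\geq 1$, je partirais de $f$ et de $S$ comme ci-dessus, et je choisirais un voisinage $U$ du support de $S$ relativement compact dans $X$, ce qui est possible car $S$ est \`a support compact. La Proposition \ref{reg}, appliqu\'ee \`a $f$ (qui appartient bien \`a $(L^p_{loc})^{r,q}(X,E)$) et au courant $S$, fournit alors une forme $g\in (W^{1,p}_{loc})^{r,q-1}(X,E)$ \`a support dans $U$ telle que $f=\opa g$ sur $X$. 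Comme $U$ est relativement compact, le support de $g$ est compact, de sorte que $g\in (W^{1,p}_c)^{r,q-1}(X,E)\subset (L^p_c)^{r,q-1}(X,E)$. La classe de $f$ dans $H_{c,L^P}^{r,q}(X,E)$ est donc nulle, ce qui est l'injectivit\'e cherch\'ee.

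L'essentiel du travail est en r\'ealit\'e d\'ej\`a accompli dans la Proposition \ref{reg}, qui garantit \`a la fois la r\'egularit\'e $W^{1,p}$ et, surtout, que le support de la solution peut \^etre rendu arbitrairement proche de celui de $S$. Le seul point \`a surveiller est donc d'exploiter la compacit\'e du support de $S$ pour choisir $U$ relativement compact, ce qui conf\`ere \`a $g$ un support compact~; l'inclusion $(W^{1,p}_c)^{r,q-1}(X,E)\subset (L^p_c)^{r,q-1}(X,E)$ ach\`eve alors la preuve sans difficult\'e suppl\'ementaire. Il n'y a pas d'obstacle s\'erieux ici~: toute la substance analytique (r\'egularit\'e int\'erieure et localisation du support) est concentr\'ee dans la Proposition \ref{reg}, et la seule chose \`a ne pas n\'egliger est le passage de $W^{1,p}_{loc}$ \`a $L^p_c$ assur\'e par la compacit\'e du support.
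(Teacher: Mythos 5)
Votre preuve est correcte et suit essentiellement la m\^eme d\'emarche que celle de l'article~: traitement trivial du cas $q=0$ par connexit\'e, puis, pour $q\geq 1$, application directe de la Proposition \ref{reg} avec un voisinage $U$ relativement compact du support de $S$, la compacit\'e du support de la solution et l'inclusion de $W^{1,p}$ dans $L^p$ concluant l'argument. Rien \`a redire.
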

\begin{proof}[Démonstration]
Si $q=0$, $H_{c,cour}^{r,0}(X,E)= H_{c,L^P}^{r,0}(X,E)=0$, car $X$ est connexe, et le corollaire est vrai, il reste alors à
prouver que pour toute $f\in (L^p_c)^{r,q}(X,E)$, $q\geq 1$, telle que $f=\opa S$ pour un courant $S\in\dc'^{r,q-1}(X,E)$ à support compact, il existe $g\in (L^p_c)^{r,q-1}(X,E)$ telle que $f=\opa g$ sur $X$. C'est une conséquence directe de la Proposition \ref{reg} en prenant $U$ un voisinage relativement compact du support de $S$.
\end{proof}

Nous allons préciser la réponse que l'on peut apporter à la question de G. Tomassini par un contrôle du support de la solution en fonction du support de la donnée lorsque certaines conditions géométriques sont réalisées.

Il résulte du Corollaire \ref{compact2} et de la Proposition \ref{reg} que si $D$ est un voisinage du support de la forme différentielle $\opa$-fermée $f\in (L^p_c)^{r,q}(X,E)$, qui vérifie $H_{c,cour}^{r,q}(D,E)=0$, il existe $g\in (W^{1,p}_c)^{r,q-1}(X,E)$ telle que $f=\opa g$ sur $X$ et ${\rm supp}~g\subset D$.

\begin{thm}
Soient $X$ une variété  analytique complexe, non compacte, de dimension complexe $n$ et $E$ un fibré vectoriel holomorphe sur $X$. Si $f\in (L^p_c)^{r,q}(X,E)$, $0\leq r\leq n$, $q\geq 1$, vérifie $\opa f=0$, si $1\leq q\leq n-1$, et $\int_X f\wedge\varphi=0$ pour toute $\varphi\in\ec^{n-r,0}(X,E)$ telle que $\opa\varphi=0$, si $q=n$, et s'il existe un ouvert de Stein $D$ tel que ${\rm supp}~f\subset D\subset\subset X$, alors il existe $g\in (W^{1,p}_c)^{r,q-1}(X,E)$ telle que ${\rm supp}~g\subset D$ et $\opa g=f$.
\end{thm}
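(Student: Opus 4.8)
The plan is to reduce the theorem to the already-established cohomological vanishing criterion. The key observation is that a Stein domain $D$ satisfies the vanishing of compactly supported cohomology $H_{c,cour}^{r,q}(D,E)=0$ in the relevant degrees, so the result will follow by combining this vanishing with Corollaire \ref{compact2} and Proposition \ref{reg}.

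First I would establish that the given hypotheses guarantee that $f$ is exact at the level of currents with compact support \emph{inside} $D$. For $1\leq q\leq n-1$, I would invoke the fact that on a Stein manifold $D$ of dimension $n$, the compactly supported Dolbeault cohomology $H_{c,cour}^{r,q}(D,E)$ vanishes for $q\leq n-1$; this is the dual, via Serre duality, of the classical Cartan--Serre vanishing $H^{n-q}(D,\Omega^{n-r}_{E^*})=0$ for $n-q\geq 1$. For the top degree $q=n$, the vanishing $H_{c,cour}^{r,n}(D,E)=0$ fails in general, and this is precisely why the moment condition $\int_X f\wedge\varphi=0$ for all $\opa$-closed $\varphi\in\ec^{n-r,0}(X,E)$ is imposed: by Serre duality on $D$, the space $H_{c,cour}^{r,n}(D,E)$ is dual to $H^0(D,\Omega^{n-r}_{E^*})$, the holomorphic $(n-r,0)$-forms, and the moment condition says exactly that the class of $f$ annihilates this dual, hence vanishes.

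Once the class of $f$ in $H_{c,cour}^{r,q}(D,E)$ is shown to vanish, there exists a current $S\in\dc'^{r,q-1}(D,E)$ with compact support in $D$ such that $f=\opa S$. Extending $S$ by zero to all of $X$, one has $f=\opa S$ in the sense of currents on $X$ with $\supp S\subset D\subset\subset X$. I would then apply Proposition \ref{reg} with the neighborhood $U$ chosen to be a relatively compact neighborhood of $\supp S$ contained in $D$, which yields $g\in (W^{1,p}_{loc})^{r,q-1}(X,E)$ with $\opa g=f$ on $X$ and $\supp g\subset U\subset D$. Since $U$ is relatively compact, $g$ has compact support, so $g\in (W^{1,p}_c)^{r,q-1}(X,E)$ with $\supp g\subset D$, as required.

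The main obstacle is the degree $q=n$ case, where I must verify carefully that the moment condition stated in the theorem is genuinely equivalent to the vanishing of the Serre-dual pairing on $D$. The subtlety is that the hypothesis tests $f$ against global $\opa$-closed forms $\varphi\in\ec^{n-r,0}(X,E)$ on all of $X$, whereas the relevant obstruction lives on $D$ and is controlled by $H^0(D,\Omega^{n-r}_{E^*})$; I would need the restriction map from global holomorphic forms on $X$ to holomorphic forms on the Stein domain $D$ to have dense image in the appropriate sense, or argue directly that any holomorphic $(n-r,0)$-form on $D$ can be approximated so that the pairing $\int_D f\wedge\varphi$ is controlled by the global moment condition. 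For $1\leq q\leq n-1$ the argument is cleaner since no moment condition intervenes and the Steinness of $D$ supplies the vanishing directly; the care needed there is only in justifying the Serre-duality vanishing of $H_{c,cour}^{r,q}(D,E)$ for a Stein $D$, which rests on the Dolbeault isomorphisms already recorded in this section together with the Andreotti--Grauert/Cartan theory.
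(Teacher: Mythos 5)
Your proposal follows exactly the paper's own argument: Steinness of $D$ gives $H^{r,q}(D,E^*)=0$ for $1\leq q\leq n$, hence by Serre duality $H^{r,q}_{c,cour}(D,E)=0$ for $1\leq q\leq n-1$ (with the moment condition handling $q=n$), and then Proposition \ref{reg} applied with a neighborhood of the support of the current solution inside $D$ upgrades it to a $W^{1,p}$ solution supported in $D$. The subtlety you flag in degree $q=n$ --- that the hypothesis tests $f$ only against globally defined $\opa$-closed forms on $X$, while Serre duality on $D$ a priori requires orthogonality to all of $H^0(D,\Omega^{n-r}_{E^*})$ --- is a genuine point, but the paper's proof passes over it in exactly the same way, so your treatment is no less complete than the published one.
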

\begin{proof}[Démonstration]
Puisque $D$ est un ouvert de Stein $H^{r,q}(D,E^*)=0$ pour tout $0\leq r\leq n$ et $1\leq q\leq n$, il résulte de la dualité de Serre que      $H_{c,cour}^{r,q}(D,E)=0$ pour tout $0\leq r\leq n$ et $1\leq q\leq n-1$, et que si un $(r,n)$-courant $T$ à support compact dans $D$ vérifie $<T,\varphi>=0$ pour toute $\varphi\in\ec^{n-r,0}(X,E)$ telle que $\opa\varphi=0$, alors il existe un $(r,n-1)$-courant $S$ à support compact dans $D$ tel que $\opa S=T$. Il suffit alors d'appliquer la Proposition \ref{reg} au domaine $D$.
\end{proof}

Plus généralement si $D$ est seulement $s$-complet au sens d'Andreotti-Grauert (i.e. $D$ possède une fonction d'exhaustion de classe $\ci$, dont la forme de Levi possède au moins $n-s+1$ valeurs propres strictement positives), alors $H^{r,q}(D,E^*)=0$ pour tout $0\leq r\leq n$ et $s\leq q\leq n$ et par conséquent $H_{c,cour}^{r,q}(D,E)=0$ pour tout $0\leq r\leq n$ et $1\leq q\leq n-s$. On pourra donc résoudre l'équation de Cauchy-Riemann dans $L^p$ avec support dans $D$ pour une donnée de bidegré $(r,q)$ à support dans $D$ à condition que $1\leq q\leq n-s$. Ou encore si $D=\Omega\setminus\ol{\Omega^*}$, où $\Omega$ est $s$-complet et $\ol{\Omega^*}$ possède une base de voisinage $s^*$-complets, mais alors seulement pour $s^*+1\leq q\leq n-s$. En effet si $s^*+1\leq q\leq n-s$ et si $\ol{\Omega^*}$ possède une base de voisinage $s^*$-complets, nous allons pouvoir corriger toute solution $g$ à support compact dans $\Omega$ et $L^p$ dans $X$ de l'équation $\opa g=f$, lorsque $f\in L^p_{r,q}(X,E)$ est à support compact dans $D=\Omega\setminus\ol{\Omega^*}$ pour obtenir une solution $h$ à support compact dans $D$ et $L^p$ dans $X$. Puisque $f$ est à support compact dans $D$, la forme différentielle $g$ est $\opa$-fermée au voisinage de $\ol{\Omega^*}$ et par conséquent il existe un $(r,q-1)$-forme u définie et $L^p_{loc}$ au voisinage de $\ol{\Omega^*}$ telle que $g=\opa u$ ($\ol{\Omega^*}$ possède une base de voisinage $s^*$-complets). Il suffit alors de poser $h=g-\opa(\chi u)$, où $\chi$ est une fonction de classe $\ci$ sur $X$ à support compact dans le domaine de définition de $u$ et égale à $1$ au voisinage de $\ol{\Omega^*}$.

En degré $1$ nous avons le résultat plus précis suivant~:

\begin{prop}\label{degre1}
Soient $X$ une variété analytique complexe, connexe, non compacte, de dimension complexe $n$, $D$ un domaine relativement compact de $X$ et $E$ un fibré vectoriel holomorphe sur $X$. On suppose que, pour $0\leq r\leq n$, $H_c^{r,1}(X,E)=0$ (ce qui est satisfait par exemple si $X$ est $(n-1)$-complète au sens d'Andreotti-Grauert et en particulier pour les variétés de Stein de dimension $n\geq 2$) et que $X\setminus D$ est connexe, alors pour toute $(r,1)$-forme $f$ à coefficients dans $L^p(X,E)$ à support dans $\ol D$, il existe une unique $(r,0)$-forme $g$ à coefficients dans $W^{1,p}(X,E)$ et à support dans $\ol D$ telle que $\opa g=f$.
\end{prop}
\begin{proof}[Démonstration]
Il résulte du Corollaire \ref{compact}, qu'il existe  une $(r,0)$-forme $g$ à coefficients dans $L^p(X)$ et à support compact dans $X$ telle que $\opa g=f$. La forme $g$ s'annule donc sur un ouvert de $X\setminus D$. De plus $g$ est une $(r,0)$-forme holomorphe sur l'ouvert $X\subset{\rm supp}~f$ qui contient $X\setminus D$ et par prolongement analytique, puisque $X\setminus D$ est connexe, elle s'annule identiquement sur $X\setminus D$. Le support de $g$ est donc contenu dans $\ol D$.
Soit $h$ une autre solution, alors $g-h$ est une fonction holomorphe à support compact dans $X$ qui est donc nulle par prolongement analytique puisque $X$ est connexe. Par conséquent $g=h$ et de plus $g\in W^{1,p}(X,E)$ par la Proposition \ref{reg}.
\end{proof}

Nous retrouvons ainsi le contrôle du support proposé par E. Amar \cite{Am} en degré $1$, lorsque $X$ est une variété de Stein.
\medskip

Nous pouvons déduire de la Proposition \ref{degre1} une version $L^p$ du phénomène de Hartogs-Bochner-Severi (voir les chapitre IV et V de \cite{Lalivre} et les références associées pour l'étude de ce phénomène). Un version $L^2$ est donnée dans le dernier paragraphe de \cite{ChSh}.

\begin{thm}\label{hartogs}
Soient $X$ une variété analytique complexe, connexe, non compacte, de dimension complexe $n$, $D$ un domaine relativement compact de $X$ à bord Lipschitz et $E$ un fibré vectoriel holomorphe sur $X$. On suppose que, pour $0\leq r\leq n$, $H_c^{r,1}(X,E)=0$ (ce qui est satisfait par exemple si $X$ est $(n-1)$-complète au sens d'Andreotti-Grauert et en particulier par les variétés de Stein de dimension $n\geq 2$) et que $X\setminus D$ est connexe. Soit $f\in W^{1-\frac{1}{p}}_{r,0}(\pa D,E)$ telle que
$$\int_{\pa D} f\wedge\opa\varphi=0~{\rm pour~toute~} \varphi\in \ec^{n-r,n-1}(X,E^*).$$
Alors il existe une section $F$ de $E$ holomorphe sur $D$ et $W^{1,p}$ sur $X$ telle que la trace de $F$ sur $\pa D$ coïncide avec $f$.
\end{thm}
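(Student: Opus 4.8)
Le plan est de ramener l'existence de $F$ \`a la r\'esolution \`a support compact de la Proposition \ref{degre1}, en fabriquant \`a partir de $f$ une $(r,1)$-forme $\opa$-ferm\'ee, de classe $L^p$ et \`a support dans $\ol D$. Comme toute l'analyse d\'elicate (contr\^ole du support et r\'egularit\'e $W^{1,p}$) est d\'ej\`a contenue dans la Proposition \ref{degre1}, l'essentiel du travail consiste \`a mettre en place le prolongement de Sobolev et \`a v\'erifier la $\opa$-fermeture. Premi\`ere \'etape, le prolongement~: puisque $\pa D$ est lipschitzien et $1<p<+\infty$, l'op\'erateur de trace $(W^{1,p})^{r,0}\to W^{1-\frac1p,p}_{r,0}(\pa D)$ admet un inverse \`a droite continu, et je choisis donc un prolongement $\wt f\in (W^{1,p})^{r,0}(X,E)$ \`a support compact dont la trace sur $\pa D$ est $f$. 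C'est le seul endroit o\`u la r\'egularit\'e lipschitzienne du bord est utilis\'ee.

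Deuxi\`eme \'etape, la $\opa$-fermeture. Je pose
$$f_0=\begin{cases}\opa\wt f & \text{sur } D,\\ 0 & \text{sur } X\setminus\ol D.\end{cases}$$
Comme $\wt f\in W^{1,p}$, on a $\opa\wt f\in (L^p)^{r,1}(D,E)$, de sorte que $f_0\in (L^p_c)^{r,1}(X,E)$ et $\supp f_0\subset\ol D$. Pour voir que $f_0$ est $\opa$-ferm\'ee au sens des courants sur $X$, je teste contre $\psi\in\dc^{n-r,n-2}(X,E^*)$~: au signe pr\`es,
$$\langle f_0,\opa\psi\rangle=\int_D\opa\wt f\wedge\opa\psi=\int_D d(\wt f\wedge\opa\psi)=\int_{\pa D} f\wedge\opa\psi,$$
o\`u la deuxi\`eme \'egalit\'e r\'esulte de $\opa(\wt f\wedge\opa\psi)=\opa\wt f\wedge\opa\psi$ et de $d=\opa$ en bidegr\'e $(n,n-1)$, et la troisi\`eme de la formule de Stokes (valable pour une forme \`a coefficients $W^{1,1}$ sur un domaine lipschitzien) jointe \`a $\wt f|_{\pa D}=f$. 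Ce dernier terme est nul par l'hypoth\`ese de moments impos\'ee \`a $f$, d'o\`u $\opa f_0=0$.

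Troisi\`eme \'etape, la r\'esolution et la conclusion. Les hypoth\`eses $H_c^{r,1}(X,E)=0$, $X\setminus D$ connexe et $D$ relativement compact \'etant exactement celles de la Proposition \ref{degre1}, il existe une unique $g\in (W^{1,p})^{r,0}(X,E)$, \`a support dans $\ol D$, telle que $\opa g=f_0$. Je pose alors $F=\wt f-g\in (W^{1,p})^{r,0}(X,E)$. Sur $D$ on a $\opa F=\opa\wt f-f_0=0$, donc $F$ est une $(r,0)$-forme holomorphe \`a valeurs dans $E$ sur $D$. Enfin, $g$ s'annulant identiquement sur l'ouvert $X\setminus\ol D$ et appartenant globalement \`a $W^{1,p}(X)$, sa trace sur $\pa D$ est nulle (les deux traces unilat\'erales d'une fonction globalement $W^{1,p}$ co\"incident, et celle prise de l'ext\'erieur vaut $0$)~; par cons\'equent la trace de $F$ sur $\pa D$ co\"incide avec celle de $\wt f$, c'est-\`a-dire $f$.

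Le point d\'elicat n'est donc pas cohomologique — tout le contenu analytique est d\'ej\`a encapsul\'e dans la Proposition \ref{degre1} — mais r\'eside dans la gestion fine des espaces de Sobolev au bord~: surjectivit\'e de la trace avec rel\`evement born\'e sur un bord seulement lipschitzien, validit\'e de la formule de Stokes pour des formes \`a coefficients $W^{1,1}$ sur $D$, et identification \`a z\'ero de la trace de $g$. Ce sont ces arguments d'analyse r\'eelle, plut\^ot que la th\'eorie du $\opa$, qu'il faut justifier avec soin.
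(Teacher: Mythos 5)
Votre d\'emonstration est correcte et suit essentiellement la m\^eme d\'emarche que celle de l'article~: prolongement $W^{1,p}$ de $f$ gr\^ace au bord lipschitzien, prolongement par $0$ de $\opa\wt f_{|_D}$, v\'erification de la $\opa$-fermeture au sens des courants via la condition de moments, puis application de la Proposition \ref{degre1} et $F=\wt f-g$. La seule diff\'erence est de pr\'esentation~: l'article obtient l'identit\'e de Stokes par approximation de $\wt f$ par des formes lisses (lemme de Friedrich) l\`a o\`u vous invoquez directement Stokes pour des coefficients $W^{1,1}$ sur un domaine lipschitzien, ce qui revient au m\^eme.
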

\begin{proof}[Démonstration]
Notons $\wt f$ une extension de $f$ contenue dans $W^{1,p}(X,E)$ (une telle extension existe lorsque le bord de $D$ est Lipschitz d'après le Théorème 1.5.1.3 de \cite{GrSobolev}) et $(\opa\wt f)_0$ le prolongement de $\opa\wt f_{|_D}$ par $0$ à $X$ . Nous allons montrer que la $(0,1)$-forme $(\opa\wt f)_0$ est $\opa$-fermée au sens des courants. Considérons une suite $(\wt f_\nu)_{\nu\in\nb}$ de $(0,1)$-formes de classe $\ci$ sur $X$ qui converge vers $\wt f$ sur $X$ dans $W^{1,p}$, alors, pour toute $\varphi\in \ec^{n-r,n-1}(X,E^*)$,
\begin{equation*}
\int_D\opa\wt f\wedge\opa\varphi=\lim_{\nu\to +\infty}\int_D\opa\wt f_\nu\wedge\opa\varphi=\lim_{\nu\to +\infty}\int_{\pa D}\wt f_\nu\wedge\opa\varphi=\int_{\pa D} f\wedge\opa\varphi=0.
\end{equation*}
D'après la Proposition \ref{degre1}, il existe une section $g\in W^{1,p}(X,E)$ à support dans $\ol D$ telle que $\opa g=(\opa\wt f)_0$. Puisque $g\in W^{1,p}(X,E)$ la trace de $g$ sur $\pa D$ existe et est nulle et $F=\wt f-g$ convient.
\end{proof} 

\section{Complexes de Cauchy-Riemann $L^p$ et dualité}\label{s2}

Dans cette section $X$ désigne une variété analytique complexe de dimension complexe $n$, $D$ un domaine relativement compact à bord rectifiable contenu dans $X$ et $E$ un fibré holomorphe de rang $l$ sur $X$. On munit $X$ d'une métrique hermitienne localement équivalente à la métrique usuelle de $\cb^n$ telle que le complexifié de l'espace tangent à $X$ vérifie $\cb T(X)=T^{1,0}(X)\oplus T^{0,1}(X)$ et que $T^{1,0}(X)\perp T^{0,1}(X)$. On note $dV$ la forme volume associée à cette métrique.

Pour tout entier $p\geq 1$, on définit l'espace $L^p(D)$ comme l'espace des fonctions $f$ sur $D$ à valeurs complexes telles que $|f|^p$ est intégrable sur $D$. Puisque $D\subset\subset X$, on a
$$\dc(D)\subset\ec(X)_{|_D}\subset L^p(D),$$
où $\ec(X)_{|_D}$ est l'espace des restrictions à $D$ des fonctions de classe $\ci$ sur $X$ et $\dc(D)$ le sous-espace des fonctions de classe $\ci$ à support compact dans $D$.
L'espace $L^p(D)$ est un espace de Banach, c'est le complété de $\dc(D)$ pour la norme $\|.\|_p$ définie par
$$\|f\|_p=(\int_D |f|^pdV)^\frac{1}{p}.$$

\subsection{Complexes de Cauchy-Riemann $L^p$}\label{complexe}

Pour $0\leq r,q\leq n$, $\ec^{r,q}(X,E)$ désigne l'espace des $(r,q)$-formes différentielles de classe $\ci$ sur $X$ à valeurs dans $E$ et $\dc^{r,q}(D,E)$ le sous-espace de $\ec^{r,q}(X,E)$ des formes à support compact dans $D$.
Soit $(U_i)_{i\in I}$ un recouvrement fini de $\ol D$ par des ouverts de cartes de $X$ qui sont également des ouverts de trivialisation de $E$ et $(\chi_i)_{i\in I}$ une partition de l'unité sur $D$ relative au recouvrement $(U_i)_{i\in I}$. On définit $L^p_{r,q}(D,E)$ comme l'espace des $(r,q)$-formes différentielles $f$ sur $D$ à valeurs dans $E$ telles que pour tout $i\in I$, dans des coordonnées locales, la forme $\chi_i f$ soit une forme à coefficients dans $L^p(D\cap U_i, \cb^l)$. Pour chaque choix de trivialisation et de coordonnées sur $U_i$ on peut définir  $\|\chi_i f\|_p$ comme la somme des normes $\|.\|_p$ de ses coefficients et on pose alors $\|f\|_p=\sum_{i\in I}\|\chi_i f\|_p$. Un changement de coordonnées ou de trivialisation définit une norme équivalente. L'espace $L^p_{r,q}(D,E)$ possède donc une structure d'espace de Banach et c'est le complété de $\dc^{r,q}(D,E)$ pour la norme $\|.\|_p$.

Pour $r$ fixé, $0\leq r\leq n$, nous allons associer aux espaces $L^p_{r,q}(D,E)$, $0\leq q\leq n$, des complexes d'opérateurs non bornés dont la restriction aux espaces $\dc^{r,q}(D,E)$, $0\leq q\leq n$, est le complexe de Cauchy-Riemann classique $(\dc^{r,\bullet}(D,E),\opa)$.
Pour cela nous devons définir des opérateurs de $L^p_{r,q}(D,E)$ dans $L^p_{r,q+1}(D,E)$ dont la restriction à $\dc^{r,q}(D,E)$ coïncide avec l'opérateur de Cauchy-Riemann $\opa$ associé à la structure complexe de $X$.

Définissons $\opa_c~:~L^p_{r,q}(D,E)\to L^p_{r,q+1}(D,E)$ comme l'extension fermée minimale de $\opa_{|_\dc}=\opa_{|_{\dc^{r,q}(D,E)}}$, c'est par définition un opérateur fermé et $f\in Dom(\opa_c)$ si et seulement si il existe une suite $(f_\nu)_{\nu\in\nb}$ d'éléments de $\dc^{r,q}(D,E)$ et $g\in\dc^{r,q+1}(D,E)$ tels que $(f_\nu)_{\nu\in\nb}$ converge vers $f$ dans $L^p_{r,q}(D,E)$ et $(\opa f_\nu)_{\nu\in\nb}$  converge vers $g$ dans $L^p_{r,q+1}(D,E)$, on a alors $g=\opa_c f$.

On peut aussi considérer $\opa_s~:~L^p_{r,q}(D,E)\to L^p_{r,q+1}(D,E)$, l'extension fermée minimale de $\opa_{|_\ec}=\opa_{|_{\ec^{r,q}(X,E)_{|_D}}}$, c'est également un opérateur fermé et $f\in Dom(\opa_s)$ si et seulement si il existe une suite $(f_\nu)_{\nu\in\nb}$ d'éléments de $\ec^{r,q}(X,E)$ et $g\in\ec^{r,q+1}(X,E)$ tels que $(f_\nu)_{\nu\in\nb}$ converge vers $f$ dans $L^p_{r,q}(D,E)$ et $(\opa f_\nu)_{\nu\in\nb}$  converge vers $g$ dans $L^p_{r,q+1}(D,E)$, on a alors $g=\opa_s f$.

L'opérateur $\opa$ s'étend à $L^p_{r,q}(D,E)$ au sens des courants, on peut alors considérer les opérateurs $\opa_{\wt c}~:~L^p_{r,q}(D,E)\to L^p_{r,q+1}(D,E)$ et
$\opa~:~L^p_{r,q}(D,E)\to L^p_{r,q+1}(D,E)$ qui coïncident avec l'opérateur $\opa$ au sens des courants et dont les domaines de définitions sont respectivement
$$Dom(\opa_{\wt c})=\{f\in L^p_{r,q}(X,E)~|~{\rm supp}~f\subset\ol D, \opa f\in L^p_{r,q}(X,E)\}$$
et
$$Dom(\opa)=\{f\in L^p_{r,q}(D,E)~|~ \opa f\in L^p_{r,q}(D,E)\}.$$

\begin{defin}
Un \emph{complexe cohomologique d'espaces de Banach} est un couple $(E^\bullet,d)$, où $E^\bullet=(E^q)_{q\in\zb}$ est une suite d'espaces de Banach et $d=(d^q)_{q\in\zb}$ une suite d'opérateurs linéaires $d^q$ fermés de $E^q$ dans $E^{q+1}$ qui vérifient $d^{q+1}\circ d^q=0$.

Un \emph{complexe homologique d'espaces de Banach} est un couple $(E_\bullet,d)$, où $E_\bullet=(E_q)_{q\in\zb}$ est une suite d'espaces de Banach et $d=(d_q)_{q\in\zb}$ une suite d'opérateurs linéaires $d_q$ fermés de $E_q$ dans $E_{q+1}$ qui vérifient $d_{q}\circ d_{q+1}=0$.
\end{defin}

A tout complexe cohomologique on associe les \emph{groupes de cohomologie} $(H^q(E^\bullet))_{q\in\zb}$ définis par
$$H^q(E^\bullet)=\ker d^q/\im d^{q-1}$$
et on les munit de la topologie quotient. De même à tout complexe homologique on associe les \emph{groupes d'homologie} $(H_q(E_\bullet))_{q\in\zb}$ définis par
$$H_q(E_\bullet)=\ker d_{q-1}/\im d_q$$
et on les munit également de la topologie quotient.

En remarquant que les opérateurs $\opa$ et $\opa_s$ satisfont $\opa\circ\opa=0$ et $\opa_s\circ\opa_s=0$, on peut considérer, pour tout $0\leq r\leq n$, les complexes $(L^p_{r,\bullet}(D,E),\opa)$ et $(L^p_{r,\bullet}(D,E),\opa_s)$. On note $H^{r,q}_{L^p}(D,E)$ et $H^{r,q}_{L^p,s}(D,E)$ leurs groupes de cohomologie respectifs

Observons que lorsque le bord de $D$ est assez régulier, Lipschitz par exemple, il résulte du lemme de Friedrich (cf. lemme 4.3.2 dans \cite{ChSh} et lemme 2.4 dans \cite{LaShdualiteL2}) que les opérateurs $\opa$ et $\opa_s$ coïncident ainsi que les opérateurs $\opa_c$ et $\opa_{\wt c}$.

\begin{prop}\label{lip}
Supposons que $D\subset\subset X$ est un domaine à bord Lipschitz alors

(i) Une forme différentielle $f\in L^p_{r,q}(D,E)$ appartient au domaine de définition de l'opérateur $\opa$ si et seulement si il existe une suite $(f_\nu)_{\nu\in\nb}$ d'éléments de $\ec^{r,q}(D,E)$ telle que les suites $(f_\nu)_{\nu\in\nb}$ et $(\opa f_\nu)_{\nu\in\nb}$ convergent respectivement vers $f$ et $\opa f$ en norme $\|.\|_p$.

(ii) Une forme différentielle $f\in L^p_{r,q}(D,E)$ appartient au domaine de définition de l'opérateur $\opa_c$ si et seulement si les formes $f_0$ et $\opa f_0$ sont toutes deux dans $L^p_*(X,E)$, si $f_0$ désigne la forme obtenue en prolongeant $f$ par $0$ sur $X\setminus D$.
\end{prop}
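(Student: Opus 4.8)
The plan is to obtain both equivalences by unwinding definitions, the one substantive ingredient being the operator identities $\opa=\opa_s$ and $\opa_c=\opa_{\wt c}$ which the preceding remark extracts from the Friedrichs lemma on the Lipschitz domain $D$. Once these are granted, (i) is the concrete meaning of $\opa=\opa_s$ together with the definition of $\opa_s$ as a minimal closed extension, and (ii) is the concrete meaning of $\opa_c=\opa_{\wt c}$ together with the explicit domain written down for $\opa_{\wt c}$.

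For (i) I would treat the two implications separately. For the backward implication, suppose $(f_\nu)\subset\ec^{r,q}(D,E)$ with $f_\nu\to f$ and $\opa f_\nu\to\opa f$ in $\|\cdot\|_p$. Such a sequence exhibits $f$ as an element of $\Dom(\opa_s)$ by the very definition of the minimal closed extension; since $\opa$ taken in the current sense is itself a closed extension of $\opa_{|_\ec}$, minimality gives $\opa_s\subset\opa$ and hence $f\in\Dom(\opa)$. (Equivalently one checks directly that $\opa$ is closed: $L^p$ convergence forces convergence of currents, for which the distributional $\opa$ is continuous.) For the forward implication, if $f\in\Dom(\opa)$ then the remark gives $f\in\Dom(\opa_s)$, and by definition of $\opa_s$ as the minimal closed extension of $\opa_{|_{\ec^{r,q}(X,E)_{|_D}}}$ there is a sequence of smooth forms on $X$, restricted to $D$, converging to $f$ in $\|\cdot\|_p$ with $\opa$-images converging to $\opa_s f=\opa f$. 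Because $\pa D$ is Lipschitz, $\ec^{r,q}(X,E)_{|_D}$ is nothing but $\ec^{r,q}(D,E)$, so this is exactly the approximating sequence in the statement.

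For (ii) I would pass to the extension-by-zero identification $f\leftrightarrow f_0$, which realizes $L^p_{r,q}(D,E)$ as the subspace of $L^p_{r,q}(X,E)$ of forms supported in $\ol D$ and carries $\opa_c$ to $\opa_{\wt c}$. By the remark $\opa_c=\opa_{\wt c}$, so $f\in\Dom(\opa_c)$ is equivalent to $f_0\in\Dom(\opa_{\wt c})=\{g\in L^p_{r,q}(X,E)\mid\supp g\subset\ol D,\ \opa g\in L^p_{r,q+1}(X,E)\}$. For a zero-extension the membership $f_0\in L^p_*(X,E)$ and the support condition $\supp f_0\subset\ol D$ are automatic, so this reduces to $\opa f_0\in L^p_*(X,E)$, i.e. to both $f_0$ and $\opa f_0$ lying in $L^p_*(X,E)$, as claimed. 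The one point worth flagging is that the distributional $\opa f_0$ differs from the zero-extension of the interior $\opa f$ (computed in $D$) by a current carried by $\pa D$; the requirement $\opa f_0\in L^p_*(X,E)$ is precisely the vanishing of that boundary term, which is the concrete incarnation of the ``minimal'' (vanishing boundary value) nature of $\opa_c$.

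The main obstacle lives entirely inside the cited Friedrichs lemma, that is in the inclusions $\opa\subset\opa_s$ and $\opa_{\wt c}\subset\opa_c$: manufacturing, for a form that a priori only lies in the distributional domain, smooth approximants whose $\opa$-images still converge in $L^p$ requires the Lipschitz regularity of $\pa D$ and a mollification that respects the boundary. Granting the remark, everything above is bookkeeping, so I would keep the write-up short and let the analytic weight rest on the Friedrichs lemma.
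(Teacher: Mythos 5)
Your proposal is correct and follows the same route as the paper, which in fact offers no separate proof of this proposition: it derives it immediately from the preceding remark that the Friedrichs lemma (lemme 4.3.2 de \cite{ChSh}, lemme 2.4 de \cite{LaShdualiteL2}) yields $\opa=\opa_s$ and $\opa_c=\opa_{\wt c}$ on a Lipschitz domain, exactly the reduction you perform before unwinding the definitions of $\opa_s$ and $\opa_{\wt c}$. Your closing observation that all the analytic weight sits in the cited Friedrichs lemma matches the paper's treatment.
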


On déduit alors immédiatement du (i) de la Proposition \ref{lip} que, lorsque le bord de $D$ est Lipschitz, les groupes de cohomologie $H^{r,q}_{L^p}(D,E)$ et $H^{r,q}_{L^p,s}(D,E)$, $0\leq r,q \leq n$, sont égaux.

\subsection{Théorie d'Andreotti-Grauert $L^p$}

L'objet de cette section est d'étendre aux groupes de cohomologie $L^p$ la théorie classique d'Andreotti-Grauert. Pour cela nous allons associer des résultats classiques sur la résolution de l'équation de Cauchy-Riemann avec estimations $L^p$ et la méthode des bosses de Grauert.

\begin{defin}\label{s-conv}
Soient $X$ une variété analytique complexe de dimension complexe $n$, $D$ un domaine relativement compact dans $X$ et $s$ un entier, $0\leq s\leq n-1$. On dira que $D$ est un \emph{domaine complètement strictement $s$-convexe}, s'il existe une fonction $\rho$ de classe $\cc^2$ à valeurs réelles définie sur un voisinage $U$ de $\ol D$, dont la forme de Levi possède au moins $n-s+1$ valeurs propres strictement positives, telle que $d\rho(x)\neq 0$ pour tout $x\in\pa D$ et
$$D=\{x\in U~|~\rho(x)<0\}.$$
On dira que $D$ est \emph{strictement $s$-convexe} si la fonction $\rho$ est seulement définie au voisinage du bord de $D$.
\end{defin}

Remarquons que grâce au lemme de Morse, la fonction $\rho$ dans la Définition \ref{s-conv} peut, sans perte de généralité, être supposée sans point critique dégénéré.
\medskip

Il résulte des travaux de L. Ma \cite{Ma} et R. Beals, P. Greiner et N. Stanton \cite{BGS} que l'on peut résoudre l'équation de Cauchy-Riemann avec estimations Sobolev $L^p$ dans les domaines complètement strictement $s$-convexes de $\cb^n$.

\begin{thm}\label{homotopie}
Soient $D\subset\subset\cb^n$ un domaine complètement strictement $s$-convexe de $\cb^n$ à bord $\ci$ et $f$ une $(r,q)$-forme différentielle à coefficients dans $W^{s,p}(D)$. Pour $0\leq r\leq n$ et $q\geq s$, il existe un opérateur linéaire continu $T_q$ de $W^{s,p}_{r,q}(\ol D)$ dans $W^{s+1/2,p}_{r,q-1}(\ol D)$ tel que
$$f=\opa T_q f+T_{q+1}\opa f.$$
\end{thm}

Observons que, puisque l'injection de $W^{1/2,p}(\ol D)$ dans $L^p(D)$ est compacte, en globalisant à l'aide d'une partition de l'unité le Théorème \ref{homotopie} comme dans le chapitre 11 de \cite{HeLe2} on obtient
\begin{thm}\label{glob}
Soient $X$ une variété analytique complexe de dimension complexe $n$, $E$ un fibré vectoriel holomorphe sur $X$ et $D\subset\subset X$ un domaine strictement $s$-convexe à bord $\ci$ de $X$, $r$ et $q$ des entiers tels que $0\leq r\leq n$ et $q\geq s$. Alors

(i) il existe des opérateurs linéaires continus $T_q^r$ de $L^p_{r,q}(D)$ dans $L^p_{r,q-1}(D)$ et un opérateur $K^r_q$ compact de $L^p_{r,q}(D)\cap Dom(\opa)$ dans lui-même tels que si $f\in L^p_{r,q}(D)$vérifie $\opa f\in L^p_{r,q+1}(D)$
$$f=\opa T^r_q f+T^r_{q+1}\opa f+K^r_qf.$$

(ii) si $q\geq \max(1,s)$, $H^{r,q}_{L^p}(D,E)$ est de dimension finie et $\opa(L^p_{r,q-1}(D))$ est un sous espace fermé de $L^p_{r,q}(D)$.
\end{thm}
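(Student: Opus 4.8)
The plan is to globalize the local homotopy formula of Theorem~\ref{homotopie} by means of a partition of unity, following Chapter~11 of \cite{HeLe2}: the defect produced by the patching will be a $1/2$-smoothing operator, hence compact on $L^p$ thanks to the compactness of $W^{1/2,p}(\ol D)\hookrightarrow L^p(D)$, and both assertions of (ii) will then follow from the Riesz--Fredholm theory of compact operators. First I would cover $\ol D$ by finitely many charts $(U_i)_{i\in I}$ that are simultaneously coordinate charts of $X$ and trivialization charts of $E$, chosen fine enough that each boundary piece $D_i:=D\cap U_i$ is of a type to which Theorem~\ref{homotopie} applies (a completely strictly $s$-convex model in $\cb^n$), while the purely interior charts are treated by the Bochner--Martinelli homotopy, admissible in all directions. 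On each $D_i$ this yields a local homotopy $g=\opa T_q^{(i)}g+T_{q+1}^{(i)}\opa g$ together with the key smoothing property that $T_q^{(i)}$ maps $L^p_{r,q}(D_i)$ continuously into $W^{1/2,p}_{r,q-1}(D_i)$. Fixing a partition of unity $(\chi_i)$ subordinate to $(U_i)$ with $\sum_i\chi_i\equiv 1$ on $\ol D$ and cut-offs $\wt\chi_i\equiv 1$ on $\supp\chi_i$ with $\supp\wt\chi_i\subset U_i$, I would set
$$T^r_q f=\sum_{i\in I}\wt\chi_i\,T_q^{(i)}(\chi_i f).$$

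Next I would expand $\opa T^r_q f+T^r_{q+1}\opa f$ using $\opa(\chi_i f)=\opa\chi_i\wedge f+\chi_i\opa f$ and the local homotopies. After telescoping $\sum_i\chi_i f=f$, the only surviving terms are those in which $\opa$ falls on a cut-off, so one obtains $f=\opa T^r_q f+T^r_{q+1}\opa f+K^r_q f$ with
$$K^r_q f=\sum_{i\in I}\wt\chi_i\,T_{q+1}^{(i)}(\opa\chi_i\wedge f)-\sum_{i\in I}\opa\wt\chi_i\wedge T_q^{(i)}(\chi_i f).$$
Since multiplication by the smooth forms $\opa\chi_i$, $\opa\wt\chi_i$ and $\wt\chi_i$ preserves $W^{1/2,p}$ and the $T^{(i)}$ gain $1/2$ derivative, $K^r_q$ maps $L^p_{r,q}(D)$ continuously into $W^{1/2,p}_{r,q}(D)$, whence $K^r_q$ is compact on $L^p_{r,q}(D)$; the same smoothing structure shows that $K^r_q$ preserves $\Dom(\opa)$ and is compact there for the graph norm, which proves (i).

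For (ii) I would restrict attention to the closed subspace $Z^{r,q}:=L^p_{r,q}(D)\cap\ker\opa$, on which the graph norm coincides with $\|\cdot\|_p$. For $f\in Z^{r,q}$ the homotopy reads $f=\opa(T^r_q f)+K^r_q f$; in particular $\opa K^r_q f=0$, so $K^r_q$ maps $Z^{r,q}$ compactly into itself and $(\id-K^r_q)f=\opa(T^r_q f)\in\opa(L^p_{r,q-1}(D))$. By Riesz--Fredholm theory $\id-K^r_q$ has closed range of finite codimension in $Z^{r,q}$; since $\im\big((\id-K^r_q)|_{Z^{r,q}}\big)\subset\opa(L^p_{r,q-1}(D))\subset Z^{r,q}$, the intermediate space $\opa(L^p_{r,q-1}(D))$ contains a closed subspace of finite codimension and is therefore itself closed, while $H^{r,q}_{L^p}(D,E)=Z^{r,q}/\opa(L^p_{r,q-1}(D))$ is a quotient of the finite-dimensional space $Z^{r,q}/\im\big((\id-K^r_q)|_{Z^{r,q}}\big)$, hence finite-dimensional. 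The hypotheses $q\geq s$ and $q\geq 1$ are precisely what guarantee that both $T_q$ and $T_{q+1}$ exist and that $L^p_{r,q-1}(D)$ is a genuine space of forms.

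The main obstacle I anticipate is the careful bookkeeping of the localization: arranging the cover so that each boundary piece $D_i$ is genuinely a completely strictly $s$-convex domain to which Theorem~\ref{homotopie} applies (handling the spurious boundary introduced by $\pa U_i$), and checking that every commutator term in $K^r_q$ really factors through $W^{1/2,p}$ uniformly across charts and bundle trivializations, so that the compactness of $W^{1/2,p}(\ol D)\hookrightarrow L^p(D)$ can be invoked. The passage from the $W^{s,p}$ statement of Theorem~\ref{homotopie} to the $L^p\to W^{1/2,p}$ gain needed at the base Sobolev level is the one point where the precise mapping properties of the local kernels, rather than the bare homotopy identity, must be used.
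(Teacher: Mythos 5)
Your proposal is correct and follows essentially the same route as the paper, which itself only sketches this result by invoking the compactness of the embedding $W^{1/2,p}(\ol D)\hookrightarrow L^p(D)$ and the partition-of-unity globalization of Theorem~\ref{homotopie} carried out in Chapter~11 of \cite{HeLe2}. Your explicit formulas for $T^r_q$ and $K^r_q$ and the Riesz--Fredholm argument for (ii) are exactly the intended details.
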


Nous pouvons maintenant mettre en {\oe}uvre la méthode des bosses de Grauert. Rappelons quelques définitions et quelques résultats géométriques donnés dans le chapitre 12 de \cite{HeLe2}.

\begin{defin}
On appelle \emph{élément d'extension strictement $s$-convexe} un couple ordonné $[\theta_1,\theta_2]$ d'ouverts de $X$ à bord $\ci$ tel que $\theta_1\subset\theta_2$ satisfaisant la condition suivante~:~il existe un ouvert pseudoconvexe $V$ contenu dans un domaine de trivialisation de $E$ contenant ${\ol\theta_2\setminus\theta_1}$ et des domaines strictement $s$-convexes $D_1$ et $D_2$ tels que $D_1\subset D_2$, $\theta_2=\theta_1\cup D_2$, $\theta_1\cap D_2=D_1$ et $(\ol{\theta_1\setminus D_2})\cap(\ol{\theta_2\setminus\theta_1})=\emptyset$ et une application biholomorphe $h$ définie sur un voisinage de $\ol V$ à valeurs dans $\cb^n$ telle que $h(D_j)$, $j=1,2$, soit un domaine complètement strictement $q$-convexe à bord $\ci$ de $\cb^n$.
\end{defin}

\begin{defin}
Soient $D\subset\subset \Omega\subset\subset X$ des ouverts de $X$ à bord $\ci$. On dira que $\Omega$ est une \emph{extension strictement $s$-convexe} de $D$, s'il existe un voisinage $U$ de $\ol\Omega\setminus D$ et une fonction $\rho$ de classe $\cc^2$ à valeurs réelles sur $U$ avec un ensemble de points critiques discret et dont la forme de Levi possède au moins $n-s+1$ valeurs propres strictement positives, telle que
$$D\cap U=\{x\in U~|~\rho(x)<0\}\quad {\rm et}\quad d\rho(x)\neq 0 \quad {\rm si}\quad x\in\pa D,$$
$$\Omega\cap U=\{x\in U~|~\rho(x)<1\}\quad {\rm et}\quad d\rho(x)\neq 0 \quad {\rm si}\quad x\in\pa \Omega.$$
\end{defin}

\begin{prop}\label{extgeom}
Soient $D\subset\subset \Omega\subset\subset X$ des ouverts de $X$ à bord $\ci$ tels que $\Omega$ soit une extension strictement $s$-convexe de $D$. Il existe une suite finie $\theta_0,\dots,\theta_N$ d'ouverts de $X$ tels que
$$D=\theta_0\subset\dots\subset\theta_N=\Omega$$
et que $[\theta_{j-1},\theta_j]$, $j=1,\dots,N$, soit un élément d'extension strictement $s$-convexe.
\end{prop}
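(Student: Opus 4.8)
The plan is to realise $\Omega$ as the outcome of finitely many small local bumps applied to $D$, which is exactly the Grauert bumping method recalled in chapter 12 of \cite{HeLe2}. Everything takes place in the compact shell $\ol\Omega\setminus D$, on which the defining function $\rho$ of the extension, together with the positivity of its Levi form, is available. The basic local fact I would use is the convexification of a strictly $s$-convex function: for each $x_0\in\ol\Omega\setminus D$ one can choose holomorphic coordinates centred at $x_0$ and a sufficiently small convex (hence pseudoconvex) ball $V\ni x_0$, contained in a common coordinate chart and trivialisation domain of $E$, together with a biholomorphism $h$ defined near $\ol V$, such that for every $c$ with $0\le c\le 1$ the image $h(\{x\in V~|~\rho(x)<c\})$ is a completely strictly $s$-convex domain with $\ci$ boundary in $\cb^n$; this is possible because the Levi form of $\rho$ has at least $n-s+1$ positive eigenvalues, so on a small enough ball $\rho-c$ combines with a strictly plurisubharmonic defining function of the convex boundary $\partial V$ into a global strictly $s$-convex defining function of the relevant piece.

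Next I would invoke compactness. Cover $\ol\Omega\setminus D$ by finitely many such balls $V_1,\dots,V_m$ and choose a $\ci$ partition of unity $\chi_1,\dots,\chi_m$ with $\chi_k\ge0$, $\supp\chi_k\subset\subset V_k$ and $\sum_k\chi_k=1$ on a neighbourhood of $\ol\Omega\setminus D$. To keep each individual bump small enough for the convexification above to apply, I add each $\chi_k$ in many small increments: fix a large integer $L$, process the patches one after another, and list the functions thus obtained as $\psi_1,\dots,\psi_N$, where $N=Lm$, each $\psi_j$ is of the form $\frac1L\chi_k$ for a single $k$, and $\sum_{j=1}^N\psi_j=1$ on the shell. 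Setting $\sigma_0=0$ and $\sigma_j=\psi_1+\dots+\psi_j$, I define
$$\theta_j=D\cup\{x\in U~|~\rho(x)<\sigma_j(x)\}.$$
Since $\sigma_j$ increases monotonically from $0$ to $1$, a direct point-set check gives $D=\theta_0\subset\theta_1\subset\dots\subset\theta_N=\Omega$.

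It then remains to verify that each pair $[\theta_{j-1},\theta_j]$ is a strictly $s$-convex extension element. Let $k$ be the index with $\psi_j=\frac1L\chi_k$ and put $V=V_k$ with its biholomorphism $h$. Inside $V$ set $D_2=\{x\in V~|~\rho(x)<\sigma_j(x)\}$ and $D_1=\{x\in V~|~\rho(x)<\sigma_{j-1}(x)\}$. Because $D\cap V\subset\{\rho<\sigma_{j-1}\}$ and $\supp\psi_j\subset\subset V$, one checks directly that $D_1\subset D_2$, that $\theta_j=\theta_{j-1}\cup D_2$, and that $\theta_{j-1}\cap D_2=D_1$. Since $\psi_j$ has small sup norm, $D_1$ and $D_2$ are nearby sublevel pieces, so the convexification of the first paragraph shows that $h(D_1)$ and $h(D_2)$ are completely strictly $s$-convex domains with $\ci$ boundary. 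Finally $\ol{\theta_j\setminus\theta_{j-1}}\subset\supp\psi_j$, a compact subset of $V$, whereas $\ol{\theta_{j-1}\setminus D_2}$ lies in $\ol{X\setminus V}$; hence $(\ol{\theta_{j-1}\setminus D_2})\cap(\ol{\theta_j\setminus\theta_{j-1}})=\emptyset$, as required.

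The main obstacle is regularity: one must guarantee that every $\theta_j$ has $\ci$ boundary and, in particular, that the surface $\{\rho=\sigma_j\}$ meets the old boundary $\partial\theta_{j-1}$ transversally so that the two pieces fit together into a $\ci$ hypersurface after rounding the corner. This is where the hypothesis that the critical set of $\rho$ is discrete enters: on the compact shell it is finite, so by Sard's theorem and an arbitrarily small perturbation of the increments $\psi_j$ one may assume $d(\rho-\sigma_j)\ne0$ along $\{\rho=\sigma_j\}$ and transversality of the two boundary pieces at each step. Together with $d\rho\ne0$ on $\partial D$ and $\partial\Omega$, this yields $\ci$ boundaries throughout and completes the identification of each $[\theta_{j-1},\theta_j]$ as a strictly $s$-convex extension element.
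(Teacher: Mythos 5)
The paper offers no proof of this proposition: it is recalled, together with the surrounding definitions, from chapter 12 of \cite{HeLe2}, and your bumping construction is exactly the argument of that reference (local convexification of the sublevel sets of $\rho$, a partition of unity cut into small increments, the chain $\theta_j=D\cup\{x\in U\mid\rho(x)<\sigma_j(x)\}$), with the set-theoretic verifications $\theta_j=\theta_{j-1}\cup D_2$, $\theta_{j-1}\cap D_2=D_1$ and $(\ol{\theta_{j-1}\setminus D_2})\cap(\ol{\theta_j\setminus\theta_{j-1}})=\emptyset$ done correctly. Two steps are stated more loosely than they can be proved. First, $h(\{x\in V\mid\rho(x)<c\})$ does not have $\ci$ boundary: it has a corner along $\pa V\cap\{\rho=c\}$, so $D_1$ and $D_2$ must be defined by a regularized maximum of $\rho-c$ with a strictly plurisubharmonic defining function of $V$ (which is what your ``combines into a global defining function'' amounts to); since the smoothing only modifies the domains near $\pa V$, where $\psi_j$ vanishes and hence $\theta_{j-1}$ and $\theta_j$ agree, the set identities survive, but this needs to be said explicitly. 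Second, the appeal to Sard for the smoothness of $\pa\theta_j$ is too quick near a critical point $p$ of $\rho$, where $d\rho$ is itself small: you need the increments ordered round by round so that $d\sigma_j=O(1/L)$ on the shell, together with a quantitative lower bound on $|\sigma_j(p)-\rho(p)|$ in terms of $L$ obtained by perturbing the schedule (or, as in \cite{HeLe2}, a separate Morse-theoretic step to cross each critical level). Neither point changes the strategy, which is the intended one.
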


Nous sommes maintenant en mesure de prouver l'invariance de la cohomologie $L^p$ par les extensions strictement $s$-convexes.

\begin{thm}\label{ext}
Soient $X$ une variété analytique complexe de dimension complexe $n$, $E$ un fibré vectoriel holomorphe sur $X$ et
$D\subset\subset \Omega\subset\subset X$ des ouverts de $X$ à bord $\ci$ tels que $\Omega$ soit une extension strictement $s$-convexe de $D$. Alors l'application restriction
$$H^{r,q}_{L^p}(\Omega,E)\to H^{r,q}_{L^p}(D,E),\quad 0\leq r\leq n,~\max(1,s)\leq q\leq n,$$
est un isomorphisme.
\end{thm}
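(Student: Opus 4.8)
The plan is to combine the geometric decomposition of Proposition \ref{extgeom} with the local $L^p$ homotopy formula of Theorem \ref{homotopie}, following Grauert's bumping method. By Proposition \ref{extgeom} there is a chain $D=\theta_0\subset\cdots\subset\theta_N=\Omega$ whose successive steps $[\theta_{j-1},\theta_j]$ are elementary strictly $s$-convex extension elements, and since a composition of isomorphisms is an isomorphism it suffices to treat one elementary step. So I reduce to showing that, for an extension element $[\theta_1,\theta_2]$ with associated data $D_1\subset D_2$, pseudoconvex $V\supset\ol{\theta_2\setminus\theta_1}$ and biholomorphism $h$, the restriction map $H^{r,q}_{L^p}(\theta_2,E)\to H^{r,q}_{L^p}(\theta_1,E)$ is bijective for $\max(1,s)\le q\le n$. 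The map is well defined on cohomology because restricting an $L^p$ form from $\theta_2$ to $\theta_1$ preserves membership in $L^p$ and commutes with $\opa$; since the pieces are smoothly bounded, Proposition \ref{lip} (Friedrichs) identifies $\opa$ with $\opa_s$ and makes the domain bookkeeping routine. Throughout I use that $h$ transports Theorem \ref{homotopie} to $D_1$ and $D_2$, yielding a genuine $L^p$ homotopy formula with no compact term in every degree $q\ge s$, the solution operators being continuous from $L^p$ into $W^{1/2,p}\hookrightarrow L^p$.

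For surjectivity I start from a $\opa$-closed $f\in L^p_{r,q}(\theta_1,E)$ and restrict it to the overlap $D_1$. Since $q\ge s$, the homotopy formula on $D_1$ yields $g\in L^p_{r,q-1}(D_1,E)$ with $\opa g=f$ on $D_1$. Choosing a cutoff $\chi\in\ci(X)$ equal to $1$ near $\ol{\theta_2\setminus\theta_1}$ and to $0$ near $\ol{\theta_1\setminus D_2}$, with $d\chi$ supported in $D_1$, I set $F=f-\opa(\chi g)$ on $\theta_1$ and $F=\opa\big((1-\chi)g\big)$ on $D_2$, the truncated forms being extended by zero outside the overlap. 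Both expressions reduce to $(1-\chi)f-\opa\chi\wedge g$ on $D_1$, so they agree and define $F\in L^p_{r,q}(\theta_2,E)$ with $\opa F=0$; by construction $F|_{\theta_1}=f-\opa(\chi g)$, so $[F]$ restricts to $[f]$ and the map is onto. This step is uniform in the whole range $q\ge\max(1,s)$.

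For injectivity I take a $\opa$-closed $F\in L^p_{r,q}(\theta_2,E)$ with $F=\opa v$ on $\theta_1$ and solve $F=\opa u$ on the bump $D_2$ via the homotopy formula ($q\ge s$). On $D_1$ the difference $v-u$ is $\opa$-closed of degree $q-1$, and for $q\ge s+1$ a second application of the homotopy formula on $D_1$ produces $\psi$ with $v-u=\opa\psi$; patching $w=(1-\chi)v+\chi u-\opa\chi\wedge\psi$ then gives $w\in L^p_{r,q-1}(\theta_2,E)$ with $\opa w=F$, since the overlap error $\opa\chi\wedge(u-v+\opa\psi)$ vanishes. Thus $[F]=0$.

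The main obstacle is the critical degree $q=\max(1,s)=s$: there $v-u$ has degree $s-1$, which lies below the solvability threshold of the $s$-convex bumps, so the last gluing fails. This is exactly where the pseudoconvex neighborhood $V$ enters. For $s\ge 2$ one has $s-1\ge 1$, and the $L^p$-solvability of $\opa$ on the Stein piece $V$ absorbs the degree-$(s-1)$ correction (after cutting off $v-u$ into $V$); for $s=1$ the difference $v-u$ is holomorphic on $D_1$ and the patching becomes an additive Cousin problem, solvable on the pseudoconvex $V$. In either case one must check that the correction remains in $L^p$ and closes up to a global primitive, which is guaranteed by the closed-range and finite-dimensionality statement of Theorem \ref{glob}(ii). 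Combining the uniform surjectivity with this injectivity argument shows that each elementary restriction, hence the restriction $H^{r,q}_{L^p}(\Omega,E)\to H^{r,q}_{L^p}(D,E)$, is an isomorphism.
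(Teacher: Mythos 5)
Your reduction to a single bumping element via Proposition \ref{extgeom}, your surjectivity argument, and your injectivity argument in the range $q\ge\max(1,s)+1$ all coincide with the paper's proof (your patching formula $w=(1-\chi)v+\chi u-\opa\chi\wedge\psi$ is exactly the expanded form of the paper's $u_1-\opa(\chi v)$ glued with $u$). The problem is the critical degree $q=\max(1,s)$, which you correctly identify as the main obstacle but then do not actually resolve. The difference $v-u$ is a $\opa$-closed $(r,q-1)$-form defined \emph{only on} $D_1$; ``cutting it off into $V$'' destroys $\opa$-closedness, so the $L^p$-solvability of $\opa$ on the pseudoconvex $V$ has nothing to act on, and for $s=1$ the ``additive Cousin problem'' you invoke is taken with respect to the covering $\{\theta_1,D_2\}$ of $\theta_2$ --- its solvability is precisely the vanishing of the class you are trying to kill, so the argument is circular. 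Pseudoconvexity of $V$ alone does not solve a Cousin problem posed on $\theta_2$.

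What actually closes the gap in the paper is an approximation argument, not a solvability argument: one first regularizes $u-u_1$ into a sequence of $\opa$-closed forms smooth up to $\ol D_1$ (Friedrichs smoothing corrected by the canonical solution on $D_1$, with the uniform bound $\|g_\nu\|_p\le C\|\opa v_\nu\|_p$), then applies a Runge-type theorem (Corollaire 12.5 of \cite{HeLe2}) to approximate these, uniformly hence in $L^p$ near $\supp\opa\chi$, by $\opa$-closed forms $\wt w_\nu$ defined on all of $V$. The gluing $w_\nu=(1-\chi)u_1+\chi(u-\wt w_\nu)$ then gives $\opa w_\nu=f_2+\opa\chi\wedge(u-u_1-\wt w_\nu)\to f_2$, and only at this point does the closed-range statement of Theorem \ref{glob}(ii) (which you do cite) conclude that $f_2\in\opa L^p_{r,q-1}(\theta_2)$. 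Without the approximation step there is no sequence in the image of $\opa$ converging to $f_2$, so invoking closed range and finite dimensionality ``to close up to a global primitive'' is not a proof. You should also note that the relevant approximation is of forms on $D_1$ by forms on $V$, which uses the specific geometric relation between the strictly $s$-convex bump and its pseudoconvex neighborhood, not merely the pseudoconvexity of $V$.
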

\begin{proof}[Démonstration]
La Proposition \ref{extgeom} nous permet de réduire la démonstration au cas où le couple $(D,\Omega)$ est un élément d'extension strictement $s$-convexe $[\theta_1,\theta_2]$, où $\theta_1$ et $\theta_2$ sont tous deux strictement $s$-convexes.

Puisque par définition des éléments d'extension $(\ol{\theta_1\setminus D_2})\cap(\ol{\theta_2\setminus\theta_1})=\emptyset$, on peut trouver des voisinages $V'$ et $V''$ de $\ol{\theta_2\setminus\theta_1}$ tels que $V'\subset\subset V''\subset\subset V$ et $V''\cap (\ol{\theta_1\setminus D_2})=\emptyset$. Choisissons une fonction $\chi$ de classe $\ci$ dans $X$ telle que $\chi=1$ sur $V'$ et ${\rm supp}~\chi\subset V''$.

Prouvons tout d'abord que l'application restriction
$$H^{r,q}_{L^p}(\theta_2,E)\to H^{r,q}_{L^p}(\theta_1,E)$$
est surjective, si $s\leq q\leq n$.
Soit $f_1\in L^p_{r,q}(\theta_1)$ telle que $\opa f_1=0$, on cherche des formes différentielles $u_1\in L^p_{r,q-1}(\theta_1)$ et $f_2\in L^p_{r,q}(\theta_2)$ telles que $\opa f_2=0$ et $f_2=f_1-\opa u_1$ sur $\theta_1$. Comme $D_1$ est l'image par une application biholomorphe, définie au voisinage $\ol D_1$, d'un domaine complètement strictement $s$-convexe borné à bord $\ci$ de $\cb^n$, il existe $u\in  L^p_{r,q-1}(D_1)$ telle que $f_1=\opa u$ sur $D_1$, lorsque $q\geq \max(1,s)$ (cf. Théorème \ref{homotopie}). Posons
$$
u_1=\left\{
\begin{array}{cl}
0 &\quad {\rm sur}~\ol{\theta_1\setminus D_2}\\
\chi u &\quad {\rm sur}~\ol D_1
\end{array}
\right.
\quad {\rm et}\quad f_2=\left\{
\begin{array}{cl}
f_1-\opa u_1 &\quad {\rm sur}~\ol{\theta_1}\\
0 &\quad {\rm sur}~\ol{\theta_2\setminus\theta_1}
\end{array}
\right.
.$$
Les formes différentielles $u_1$ et $f_2$ satisfont les conditions demandées.

Considérons maintenant l'injectivité. Soit $f_2\in L^p_{r,q}(\theta_2)$ telle que $\opa f_2=0$ et $u_1\in L^p_{r,q-1}(\theta_1)$ telle que $f_2=\opa u_1$ sur $\theta_1$, on cherche une forme différentielle $u_2\in L^p_{r,q-1}(\theta_2)$ telle que $f_2=\opa u_2$ sur $\theta_2$. Comme $D_2$ est l'image par une application biholomorphe, définie au voisinage $\ol D_2$, d'un domaine complètement strictement $s$-convexe borné à bord $\ci$ de $\cb^n$, il existe $u\in  L^p_{r,q-1}(D_2)$ telle que $f_2=\opa u$ sur $D_2$, lorsque $q\geq \max(1,s)$, (cf. Théorème \ref{homotopie}) et par conséquent $\opa (u-u_1)=0$ sur $D_1$.

Si $q\geq \max(1,s)+1$, par le Théorème \ref{homotopie}, il existe $v\in  L^p_{r,q-2}(D_1)$ telle que $\opa v=u-u_1$ sur $D_1$. Alors $u_1-\opa(\chi v)=u$ sur $V'\cap\theta_1$. Posons
$$
u_2=\left\{
\begin{array}{cl}
u_1-\opa(\chi v) &\quad {\rm sur}~\theta_1\\
u &\quad {\rm sur}~V'\cap\theta_2
\end{array}
\right.
,$$
alors $u_2\in L^p_{r,q-1}(\theta_2)$ et $f_2=\opa u_2$ sur $\theta_2$.

Supposons que $q=\max(1,s)$, il résulte du Théorème \ref{glob} que $\opa(L^p_{r,q-1}(\theta_2))$ est un sous espace fermé de $L^p_{r,q}(\theta_2)$. Il suffit donc de construire une suite $(w_\nu)_{\nu\in\nb}$ de $(r,q-1)$-formes contenue dans $Dom(\opa)$ telle que la suite $(\opa w_\nu)_{\nu\in\nb}$ converge vers $f_2$ en norme $\|.\|_p$ sur $\theta_2$. La forme différentielle $u-u_1$ est à coefficients dans $L^p(D_1)$ et $\opa$-fermée sur $D_1$, donc $u-u_1\in Dom(\opa)$. Puisque $D_1$ est à bord $\ci$ il existe une suite $(v_\nu)_{\nu\in\nb}$ de formes à coefficients $\ci$ dans $V$ telle $\lim_{\nu\to+\infty} v_\nu=u-u_1$ et $\lim_{\nu\to+\infty} \opa v_\nu=0$ en norme $\|.\|_p$ sur $D_1$. Comme $D_1$ est l'image par une application biholomorphe, définie au voisinage $\ol D_1$, d'un domaine complètement strictement $s$-convexe borné à bord $\ci$ de $\cb^n$, la solution canonique $g_\nu$ de l'équation $\opa g=\opa v_\nu$ sur $D_1$ est de classe $\ci$ sur $\ol D_1$ et vérifie $\|g_\nu\|_p\leq C\|\opa v_\nu\|_p$ sur $D_1$, la constante $C$ étant indépendante de $\nu$. En posant $\wt v_\nu=v_\nu-g_\nu$, on obtient une suite $(\wt v_\nu)_{\nu\in\nb}$ de formes $\opa$-fermées sur $D_1$ à coefficients $\ci$ sur $\ol D_1$ qui converge vers $u-u_1$ en norme $\|.\|_p$ sur $D_1$. Il résulte du Corollaire 12.5 de \cite{HeLe2}, que, pour tout $\nu\in\nb$, il existe une $(r,q-1)$-forme continue $\wt w_\nu$ $\opa$-fermée sur $V$ telle que
$$\|\wt w_\nu-\wt v_\nu\|_{\infty,~\ol\theta_1\cap V}<\frac{1}{2^n}$$
et par conséquent si $\wt V$ est un ouvert qui vérifie $\ol D_2\subset\wt V$ et $V'\subset\subset V''\subset\subset\wt V\subset\subset V$
$$\|\wt w_\nu-\wt v_\nu\|_p<\wt C\frac{1}{2^n}\quad {\rm sur}~\ol\theta_1\cap \wt V.$$
On obtient ainsi une suite $(\wt w_\nu)_{\nu\in\nb}$ de formes $\opa$-fermée à coefficients $L^p$ dans $V''$ qui converge vers $u-u_1$ en norme $\|.\|_p$ sur $D_1$. On définit alors la suite $(w_\nu)_{\nu\in\nb}$ en posant $w_\nu=(1-\chi)u_1+\chi(u-\wt w_\nu)$. alors $\opa w_\nu=f_2+\opa\chi(u-u_1-\wt w_\nu)$ et la suite $(w_\nu)_{\nu\in\nb}$ satisfait bien les conditions demandées.
\end{proof}

On en déduit le corollaire suivant en utilisant la même stratégie que dans la preuve du Théorème 12.15 de \cite{HeLe2}~:
\begin{cor}\label{isomLp-Lploc}
Soient $X$ une variété analytique complexe de dimension complexe $n$, $E$ un fibré vectoriel holomorphe sur $X$ et $D\subset\subset X$ un domaine strictement $s$-convexe. Alors pour tout couple $(r,q)$ tel que $0\leq r\leq n$, et $\max(1,s)\leq q\leq n$,
$$H^{r,q}_{L^p}(D,E)\sim H^{r,q}_{L^p_{loc}}(D,E).$$
\end{cor}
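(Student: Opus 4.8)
The plan is to compare the two complexes on $D$ through a single relatively compact strictly $s$-convex subdomain, and to play the extension isomorphism of Theorem~\ref{ext} off against its $L^p_{loc}$ counterpart. Let $\rho$ be a defining function of $D$ near $\pa D$, and for $\epsilon>0$ so small that $-\epsilon$ is a regular value set $D_\epsilon=\{\rho<-\epsilon\}$ (equal to the interior of $D$ away from the collar). Then $D_\epsilon$ is a strictly $s$-convex domain with smooth boundary, $\ol{D_\epsilon}\subset\subset D$, and $D$ is a strictly $s$-convex extension of $D_\epsilon$: writing $\psi=\frac1\epsilon(\rho+\epsilon)$ near the collar one has $D_\epsilon=\{\psi<0\}$ and $D=\{\psi<1\}$, with the same number of positive Levi eigenvalues. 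The inclusion $L^p_{r,q}(D,E)\hookrightarrow (L^p_{loc})^{r,q}(D,E)$ commutes with $\opa$, so it induces a natural map $\iota\colon H^{r,q}_{L^p}(D,E)\to H^{r,q}_{L^p_{loc}}(D,E)$, and the goal is to prove $\iota$ bijective for $\max(1,s)\le q\le n$.

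Two extension isomorphisms drive the argument. First, Theorem~\ref{ext} (with $\Omega=D$ and the smaller domain $D_\epsilon$) says the restriction $r_\epsilon\colon H^{r,q}_{L^p}(D,E)\to H^{r,q}_{L^p}(D_\epsilon,E)$ is an isomorphism in this range. Second, I need that restriction $H^{r,q}_{L^p_{loc}}(D,E)\to H^{r,q}_{L^p_{loc}}(D_\epsilon,E)$ is injective (in fact an isomorphism). This is the only genuinely new ingredient and I expect it to be the main obstacle: it follows from Proposition~\ref{isom}, which identifies $H^{r,q}_{L^p_{loc}}(\cdot,E)$ with the smooth Dolbeault cohomology $H^{r,q}(\cdot,E)$ compatibly with restriction, combined with the classical Andreotti--Grauert extension isomorphism $H^{r,q}(D,E)\cong H^{r,q}(D_\epsilon,E)$; alternatively one reruns the bumping proof of Theorem~\ref{ext} verbatim with $L^p_{loc}$ in place of $L^p$, the homotopy formula of Theorem~\ref{homotopie} supplying the required interior solutions.

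Granting these, injectivity of $\iota$ is immediate. If $f\in L^p_{r,q}(D,E)$ is $\opa$-closed and $f=\opa u$ with $u\in (L^p_{loc})^{r,q-1}(D,E)$, then since $\ol{D_\epsilon}\subset\subset D$ the restriction $u|_{D_\epsilon}$ lies in $L^p_{r,q-1}(D_\epsilon,E)$ and satisfies $\opa(u|_{D_\epsilon})=f|_{D_\epsilon}$, so $r_\epsilon[f]=0$; as $r_\epsilon$ is injective, $[f]=0$. For surjectivity, start from a $\opa$-closed $f\in (L^p_{loc})^{r,q}(D,E)$; its restriction $f|_{D_\epsilon}$ is $L^p$ up to $\pa D_\epsilon$, and surjectivity of $r_\epsilon$ yields a $\opa$-closed $\wt f\in L^p_{r,q}(D,E)$ with $\wt f|_{D_\epsilon}-f|_{D_\epsilon}=\opa v$ for some $v\in L^p_{r,q-1}(D_\epsilon,E)$. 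Thus $\wt f-f$ is $\opa$-closed and $L^p_{loc}$ on $D$ and exact on $D_\epsilon$ with an $L^p_{loc}$ potential; by injectivity of the $L^p_{loc}$ restriction it is exact on all of $D$, i.e. $\iota[\wt f]=[f]$. Hence $\iota$ is bijective.

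One minor regularity point deserves mention: Theorem~\ref{ext} is stated for $\ci$ boundaries while $\rho$ is only $\cc^2$. This is handled in the usual way, taking $D_\epsilon$ smooth at a regular value and, if needed, inserting a smooth strictly $s$-convex domain between $D_\epsilon$ and $D$, the comparison being insensitive to such adjustments. The whole content thus reduces to the $L^p_{loc}$ extension isomorphism of the second paragraph; the rest is naturality together with the observation that $L^p_{loc}$ potentials restrict to $L^p$ potentials on relatively compact subdomains.
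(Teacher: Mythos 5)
Your argument is correct and is essentially the argument the paper intends: the paper proves this corollary only by invoking ``la m\^eme strat\'egie que dans la preuve du Th\'eor\`eme 12.15 de Henkin--Leiterer'', which is precisely your scheme of shrinking to a smooth strictly $s$-convexe subdomain $D_\epsilon$, applying the extension isomorphism of the Th\'eor\`eme \ref{ext} on the $L^p$ side and the classical Andreotti--Grauert isomorphism (via the Proposition \ref{isom}) on the $L^p_{loc}$ side, and using that $L^p_{loc}$ potentials restrict to $L^p$ potentials on $D_\epsilon$. Your identification of the $L^p_{loc}$ restriction isomorphism as the one extra ingredient, and your handling of the $\cc^2$ versus $\ci$ boundary issue, are both consistent with what the cited strategy requires.
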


Pour conclure nous prouvons un théorème d'annulation pour la cohomologie $L^p$~:
\begin{cor}\label{resolutionLp}
Soient $X$ une variété analytique complexe de dimension complexe $n$, $E$ un fibré vectoriel holomorphe sur $X$ et $D\subset\subset X$ un domaine complètement strictement $s$-convexe. Alors pour tout couple $(r,q)$ tel que $0\leq r\leq n$, et $\max(1,s)\leq q\leq n$,
$$H^{r,q}_{L^p}(D,E)=0.$$
Plus précisément il existe un opérateur linéaire continu $T$ de l'espace de Banach~$Z^{r,q}_{L^p}(D,E)$ des $(r,q)$-formes $\opa$-fermées à valeurs dans $E$ et à coefficients dans $L^p(D)$ dans l'espace de Banach $W^{1/2,p}_{r,q-1}(\ol D,E)$ tel que
$$\opa Tf=f~{\rm sur}~D~{\rm pour~toute}~f\in Z^{r,q}_{L^p}(D,E).$$
\end{cor}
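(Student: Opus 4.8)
The goal is to prove Corollary~\ref{resolutionLp}, a vanishing theorem for $H^{r,q}_{L^p}(D,E)$ when $D$ is \emph{completely} strictly $s$-convex and $\max(1,s)\le q\le n$. The essential upgrade over Theorem~\ref{glob} is that the compact operator $K^r_q$ must be removed entirely, so that one obtains an honest linear solution operator $T$ with $\opa Tf=f$ on all $\opa$-closed data, together with the continuity statement from $Z^{r,q}_{L^p}(D,E)$ into $W^{1/2,p}_{r,q-1}(\ol D,E)$.

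The plan is to combine Theorem~\ref{glob}(ii), which already gives that $\opa(L^p_{r,q-1}(D))$ is closed in $L^p_{r,q}(D)$ and that $H^{r,q}_{L^p}(D,E)$ is finite-dimensional for $q\ge\max(1,s)$, with a vanishing argument for the cohomology itself. First I would show $H^{r,q}_{L^p}(D,E)=0$ as a vector space. Since $D$ is completely strictly $s$-convex, there is a defining function $\rho$ on a full neighborhood $U$ of $\ol D$ whose Levi form has at least $n-s+1$ positive eigenvalues, so $D$ may be realized as a member of an exhausting family of sublevel sets $D_c=\{\rho<c\}$. This is precisely the setting of an extension strictly $s$-convex element: for $c$ slightly larger than $0$, the pair $(D,D_c)$ is (a composition of) strictly $s$-convex extension elements, so Theorem~\ref{ext} gives that the restriction $H^{r,q}_{L^p}(D_c,E)\to H^{r,q}_{L^p}(D,E)$ is an isomorphism for $\max(1,s)\le q\le n$. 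Iterating/exhausting, the cohomology is invariant along the whole family and hence isomorphic to the $L^p_{loc}$ cohomology of $D$ itself via Corollary~\ref{isomLp-Lploc}, which in turn is the Dolbeault cohomology $H^q(D,\Omega^r_E)$. For a completely strictly $s$-convex (hence $s$-complete in the Andreotti--Grauert sense) domain this sheaf cohomology vanishes in degrees $q\ge s$, giving $H^{r,q}_{L^p}(D,E)=0$ for $\max(1,s)\le q\le n$.

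Once vanishing is known, the operator $T$ is produced from the homotopy formula. By Theorem~\ref{homotopie} (pulled back through the biholomorphism trivializing $E$, exactly as in the proof of Theorem~\ref{ext}), there is a continuous linear operator $T_q:W^{s,p}_{r,q}(\ol D)\to W^{s+1/2,p}_{r,q-1}(\ol D)$ with $f=\opa T_qf+T_{q+1}\opa f$. On the closed subspace $Z^{r,q}_{L^p}(D,E)$ of $\opa$-closed forms the second term drops, leaving $f=\opa T_qf$, so $T:=T_q$ already solves $\opa Tf=f$ on the model completely strictly $s$-convex domain and lands in $W^{1/2,p}_{r,q-1}(\ol D,E)$ with continuous dependence. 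The mild point is that Theorem~\ref{homotopie} is stated for $W^{s,p}$ data rather than $L^p$ data; for $q\ge s\ge 1$ this requires checking that the homotopy operator extends to $L^p$-coefficient closed forms with values in $W^{1/2,p}$, which follows from the smoothing properties of the Bochner--Martinelli--Koppelman-type kernels underlying the Ma--Beals--Greiner--Stanton construction, and from globalizing via the fixed finite partition of unity $(\chi_i)_{i\in I}$ as in Theorem~\ref{glob}.

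The main obstacle I anticipate is bookkeeping the passage from the local model estimate of Theorem~\ref{homotopie} to a \emph{global} continuous solution operator on $Z^{r,q}_{L^p}(D,E)$ while keeping the target in $W^{1/2,p}_{r,q-1}(\ol D,E)$: the globalization introduces the compact error $K^r_q$ of Theorem~\ref{glob}(i), and one must verify that on the kernel of $\opa$ this error can be absorbed precisely because the cohomology vanishes and $\opa(L^p_{r,q-1}(D))$ is closed. Concretely, the vanishing makes $\opa:L^p_{r,q-1}(D)\cap\Dom(\opa)\to Z^{r,q}_{L^p}(D,E)$ a continuous surjection between Banach spaces, so by the open mapping theorem it admits a bounded right inverse; refining this inverse using the half-gain regularity of the local operators $T_q^r$ upgrades its range to $W^{1/2,p}_{r,q-1}(\ol D,E)$, which gives the stated continuity of $T$. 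The remaining verifications—that the partition-of-unity patching and the biholomorphic transport preserve the $L^p$ and $W^{1/2,p}$ norms up to equivalence—are routine given the norm conventions fixed at the start of Section~\ref{s2}.
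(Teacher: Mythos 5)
Your argument for the vanishing $H^{r,q}_{L^p}(D,E)=0$ is essentially sound but takes a different route from the paper: you pass through Corollaire \ref{isomLp-Lploc} to identify $H^{r,q}_{L^p}(D,E)$ with $H^{r,q}_{L^p_{loc}}(D,E)$, then with the sheaf cohomology $H^q(D,\Omega^r_E)$ via Section \ref{s1}, and invoke the classical Andreotti--Grauert vanishing for $s$-complete domains. The paper instead stays inside the $L^p$ theory: using that the defining function $\rho$ is globally defined near $\ol D$ (this is where \emph{compl\`etement} is used) with discrete critical set, it shrinks $D$ to a minimal sublevel set $D_{a+\varepsilon}$ near $\min_D\rho$, which is biholomorphic to a finite union of model domains of $\cb^n$ where Th\'eor\`eme \ref{homotopie} gives the vanishing directly, and then propagates it back to $D$ by Th\'eor\`eme \ref{ext}. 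Your route is legitimate (Corollaire \ref{isomLp-Lploc} precedes this statement) but imports the classical Andreotti--Grauert theorem as an external input, whereas the paper's argument is self-contained; also, your preliminary discussion of extension elements $(D,D_c)$ is redundant once you invoke Corollaire \ref{isomLp-Lploc}.

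The construction of the operator $T$, however, has a genuine gap. You claim that since $\opa:L^p_{r,q-1}(D)\cap\Dom(\opa)\to Z^{r,q}_{L^p}(D,E)$ is a continuous surjection of Banach spaces, ``by the open mapping theorem it admits a bounded right inverse.'' The open mapping theorem gives, for each $f$, \emph{some} solution $g$ with $\|g\|\leq C\|f\|$, but it does not produce a \emph{linear} bounded right inverse: that would require the kernel of $\opa$ to be a complemented closed subspace of the domain, which is not automatic in a Banach space. Moreover, even granting such an inverse, it would take values in $L^p$, and your proposed ``refinement'' to upgrade its range to $W^{1/2,p}_{r,q-1}(\ol D,E)$ is not an argument. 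The paper resolves both points at once: Th\'eor\`eme \ref{glob}(i) provides a global homotopy formula $f=\opa T^r_qf+K^r_qf$ on $\opa$-closed $f$ with $T^r_q$ continuous into $W^{1/2,p}$ (in fact built from the local operators of Th\'eor\`eme \ref{homotopie}) and $K^r_q$ compact, and the abstract Proposition 5 de l'annexe C de \cite{Lalivre} (or appendix 2 of \cite{HeLe1}) shows that such a formula, combined with the vanishing of the cohomology, yields a continuous \emph{linear} solution operator obtained from $T^r_q$ by a Fredholm-type correction, hence inheriting the $W^{1/2,p}$ regularity. Your own first suggestion, applying Th\'eor\`eme \ref{homotopie} directly with $T:=T_q$, only works when $D$ itself is a model domain in $\cb^n$, which is not assumed.
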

\begin{proof}[Démonstration]
Le Lemme 8.5 de \cite{Lalivre} restant valable pour les fonctions $\cc^2$ dont la forme de Levi possède au moins $n-s+1$ valeurs propres strictement positives, on peut supposer que l'ensemble des points critiques de la fonction définissante $\rho$ de $D$ est discret. Notons $a=\min_{x\in D}\rho(x)$. Si $x\in D$ vérifie $\rho(x)=a$, c'est un point critique de $\rho$. L'ensemble des points critiques de $\rho$ étant discret, il n'existe qu'un nombre fini de tels point dans $D$. Par conséquent, pour $\varepsilon>0$ assez petit, l'ensemble $D_{a+\varepsilon}=\{x\in U~|~\rho(x)<a+\varepsilon\}$ est contenu dans un ouvert de trivialisation de $E$ biholomorphe à une réunion finie de domaines complètement strictement $s$-convexes à bord $\ci$ de $\cb^n$. La nullité de $H^{r,q}_{L^p}(D,E)$ résulte alors des Théorèmes \ref{homotopie} et \ref{ext}.

L'existence de l'opérateur $T$ est une conséquence du Théorème \ref{glob} et de la Proposition 5 de l'annexe C de \cite{Lalivre} (voir aussi l'appendice 2 de \cite{HeLe1}).
\end{proof}

\subsection{Dualité}\label{dualite}

Nous allons considérer dans cette section les complexes duaux associés aux complexes définis dans la section \ref{complexe}.

\begin{defin}
Le \emph{complexe dual} d'un complexe cohomologique $(E^\bullet,d)$ d'espaces de Banach est le complexe homologique $(E'_\bullet,d')$, avec $E'_\bullet=(E'_q)_{q\in\zb}$, où $E'_q$ est le dual fort de $E^q$ et $d'=(d'_q)_{q\in\zb}$, où $d'_q$ est l'opérateur transposé de l'opérateur $d^q$. Si les espaces $E^q$ sont réflexifs on parlera de \emph{paire} de complexes duaux.
\end{defin}

Pour tout entier $p$ tel que $1<p<+\infty$, on note $p'$ le nombre entier défini par la relation $\frac{1}{p}+\frac{1}{p'}=1$. Rappelons que si $f\in L^p(D)$ et $g\in L^{p'}(D)$ alors
$$\int_D|fg|~dV\leq \|f\|_p\|g\|_{p'}$$
et que les espaces  $L^{p}(D)$ et $L^{p'}(D)$ sont des espaces de Banach réflexifs duaux l'un de l'autre.
Si $E^*$ désigne le fibré dual du fibré $E$, il en résulte la même relation de dualité entre les espaces $L^p_{r,q}(D,E)$ et $L^{p'}_{n-r,n-q}(D,E^*)$.

Dans toute la suite nous supposerons que l'entier $p$ vérifie $1<p<+\infty$ et nous noterons $p'$ l'entier conjugué à $p$, c'est-à-dire tel que $\frac{1}{p}+\frac{1}{p'}=1$.

\begin{prop}\label{transp}
(i) L'opérateur transposé de l'opérateur $\opa_c$ de $L^p_{r,q}(D,E)$ dans $L^p_{r,q+1}(D,E)$ est l'opérateur $\opa$ de $L^{p'}_{n-r,n-q-1}(D,E^*)$ dans $L^{p'}_{n-r,n-q}(D,E^*)$.

(ii) L'opérateur $\opa$ de $L^p_{r,q}(D,E)$ dans $L^p_{r,q+1}(D,E)$ est fermé et son transposé est l'opérateur $\opa_c$ de $L^{p'}_{n-r,n-q-1}(D,E^*)$ dans $L^{p'}_{n-r,n-q}(D,E^*)$.

(iii) L'opérateur transposé de l'opérateur $\opa_s$ de $L^p_{r,q}(D,E)$ dans $L^p_{r,q+1}(D,E)$ est l'opérateur $\opa_{\wt c}$ de $L^{p'}_{n-r,n-q-1}(D,E^*)$ dans $L^{p'}_{n-r,n-q}(D,E^*)$.

(iv) L'opérateur $\opa_{\wt c}$ de $L^p_{r,q}(D,E)$ dans $L^p_{r,q+1}(D,E)$ est fermé et son transposé est l'opérateur $\opa_s$ de $L^{p'}_{n-r,n-q-1}(D,E^*)$ dans $L^{p'}_{n-r,n-q}(D,E^*)$.
\end{prop}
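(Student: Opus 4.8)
The backbone of all four statements is the integration-by-parts formula on $D$. For an $(r,q)$-form $f$ with values in $E$ and an $(n-r,n-q-1)$-form $g$ with values in $E^*$, the product $f\wedge g$ has bidegree $(n,n-1)$, so for bidegree reasons $d(f\wedge g)=\opa(f\wedge g)=\opa f\wedge g+(-1)^{r+q}f\wedge\opa g$, and Stokes' formula on the rectifiable domain $D$ yields
$$\int_D\opa f\wedge g+(-1)^{r+q}\int_D f\wedge\opa g=\int_{\pa D}f\wedge g.$$
The pairing realizing $L^{p'}_{n-r,n-q}(D,E^*)$ as the dual of $L^p_{r,q}(D,E)$ is $\langle f,g\rangle=\int_D f\wedge g$, and this Stokes identity is exactly what transfers $\opa$ from one factor to the other; the sign it produces is the one built into the current definition of $\opa$ recalled in Section~\ref{s1}, so once that convention is fixed the transposes below carry no extra sign.

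For (i), recall that $\opa_c$ is the minimal closed extension of $\opa$ on $\dc^{r,q}(D,E)$, hence densely defined, with $\dc^{r,q}(D,E)$ dense in $\Dom(\opa_c)$ for the graph norm; for such compactly supported $f$ the boundary term vanishes. I would then prove the two inclusions that identify the transpose. If $g\in L^{p'}_{n-r,n-q-1}(D,E^*)$ lies in the maximal domain, i.e. $\opa g\in L^{p'}_{n-r,n-q}(D,E^*)$ in the sense of currents, then for every $f\in\dc^{r,q}(D,E)$ the Stokes identity gives $\int_D\opa f\wedge g=(-1)^{r+q+1}\int_D f\wedge\opa g$; extending this to all $f\in\Dom(\opa_c)$ by graph-norm density shows $g\in\Dom((\opa_c)')$ with $(\opa_c)'g=\opa g$. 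Conversely, if $g\in\Dom((\opa_c)')$ with $(\opa_c)'g=h$, then testing against $f\in\dc^{r,q}(D,E)$ and comparing with the very definition of $\opa g$ as a current shows $\opa g=h\in L^{p'}$, so $g$ is in the maximal domain. Thus $(\opa_c)'=\opa$, which is (i).

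Statement (ii) then follows formally. Being the transpose of a densely defined operator, $\opa=(\opa_c)'$ is automatically closed (one may also argue directly from the sequential continuity of $\opa$ on currents, since $L^p$-convergence implies convergence in $\dc'$). For the transpose, I would invoke reflexivity: for $1<p<+\infty$ the spaces $L^p_{r,q}(D,E)$ are reflexive, and for a densely defined closed operator $T$ between reflexive Banach spaces one has $T''=T$. Since $\opa_c$ is closed and densely defined with $(\opa_c)'=\opa$ by (i), applying the transpose once more gives $(\opa)'=(\opa_c)''=\opa_c$.

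Parts (iii) and (iv) run along the same lines, with $\opa_s$ (the minimal closed extension of $\opa$ on the restrictions $\ec^{r,q}(X,E)|_D$) replacing $\opa_c$, and $\opa_{\wt c}$ (the maximal operator on forms supported in $\ol D$) replacing the maximal $\opa$. The new feature, and the main obstacle, is that the approximating forms $f_\nu\in\ec^{r,q}(X,E)$ are no longer compactly supported in $D$, so the boundary term $\int_{\pa D}f_\nu\wedge g$ in the Stokes identity no longer drops out. The crux is to show that this boundary term is precisely absorbed by computing $\opa$ of the zero-extension $g_0$ of $g$ as a current on $X$: the condition defining $\Dom(\opa_{\wt c})$, namely $\supp g_0\subset\ol D$ together with $\opa g_0\in L^{p'}(X)$, is exactly the requirement that the distributional $\opa g_0$ carry no singular part along $\pa D$, so that $\int_D\opa f_\nu\wedge g=(-1)^{r+q+1}\langle f_\nu,\opa_{\wt c}g\rangle$ holds with no residual boundary integral. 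Making this equivalence rigorous for a merely rectifiable boundary—Stokes' theorem and the identification of the boundary current—is the delicate point, and it also uses the graph-norm density of $\ec^{r,q}(X,E)|_D$ in $\Dom(\opa_s)$. Granting this, the two inclusions give $(\opa_s)'=\opa_{\wt c}$, which is (iii); and then $\opa_{\wt c}=(\opa_s)'$ is closed as a transpose, and reflexivity together with biduality give $(\opa_{\wt c})'=(\opa_s)''=\opa_s$, which is (iv).
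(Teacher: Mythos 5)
Parts (i) and (ii) of your proposal are correct and follow the paper's own route: the transpose of the minimal closed extension $\opa_c$ coincides with the transpose of its restriction to the dense core $\dc^{r,q}(D,E)$, testing against that core identifies $^t(\opa_c)$ with the maximal operator $\opa$ (the continuity of $\varphi\mapsto\langle g,\opa\varphi\rangle$ in $\|\cdot\|_p$ being equivalent to $\opa g\in L^{p'}$), and closedness of a transpose together with reflexivity and biduality give (ii).

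The gap is in (iii). You reduce the identity $\langle\opa_s f,g\rangle=\pm\langle f,\opa_{\wt c}g\rangle$ for $f\in\ec^{r,q}(X,E)_{|_D}$ to a Stokes formula on $D$ plus the claim that membership of $g$ in $\Dom(\opa_{\wt c})$ is exactly the vanishing of the resulting boundary term, and you then write ``granting this''. As you yourself observe, on a domain whose boundary is merely rectifiable, justifying Stokes for $L^{p'}$ forms and identifying the boundary current is genuinely problematic; on your route this is not a detail but the entire content of (iii), and it is left unproved. The paper avoids the difficulty altogether: the zero extension of $g$ is an $L^{p'}$ form with compact support in $\ol D\subset\subset X$, so for $\varphi\in\ec^{r,q}(X,E)$ the equality $\langle g,\opa\varphi\rangle=\pm\langle\opa g,\varphi\rangle$ is nothing but the definition of the current $\opa g$ on the boundaryless manifold $X$, extended from $\dc^{r,q}(X,E)$ to $\ec^{r,q}(X,E)$ by inserting a cutoff $\chi$ equal to $1$ near $\ol D$ (the extra term $\langle g,\opa\chi\wedge\varphi\rangle$ vanishes because $\opa\chi=0$ near $\supp g$). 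The condition $g\in\Dom(\opa_{\wt c})$ says precisely that this current is an $L^{p'}$ form supported in $\ol D$, which gives the continuity estimate $|\langle g,\opa\varphi\rangle|\leq C\|\opa g\|_{p'}\|\varphi_{|_D}\|_p$ in one direction and, conversely, testing the continuous functional against $\dc^{r,q}(X,E)$ identifies the representing $L^{p'}$ form with $\opa g$ as a current on $X$ supported in $\ol D$. No integration by parts over $D$ and no boundary regularity are needed. With (iii) repaired in this way, your (iv) goes through exactly as you state it.
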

\begin{proof}[Démonstration]
(i) Puisque le domaine de définition de l'opérateur $\opa_c$ est dense dans $L^p_{r,q}(D,E)$, il résulte du Théorème II.2.11 de \cite{Gold} que $^t(\opa_c)=~^t(\opa_{|_\dc})$. Il suffit donc de prouver que $^t(\opa_{|_\dc})=\opa$. Soit $g\in L^{p'}_{n-r,n-q-1}(D,E^*)$, considérons l'application de $\dc^{r,q}(D,E)$ à valeurs dans $\cb$ définie par $\varphi\mapsto <g,\opa_{|_\dc}\varphi>$, où $<.,.>$ désigne le crochet de dualité au sens des distributions. Elle est continue en $\|.\|_p$ si et seulement si $g\in Dom(\opa)$.
En effet si $g\in Dom(\opa)$ alors $\opa g\in L^{p'}_{n-r,n-q}(D,E^*)$ et on a
$$|<g,\opa_{|_\dc}\varphi>|=|<\opa g,\varphi>|=|\int_D \opa g\wedge\varphi|\leq C\|\opa g\|_{p'}\|\varphi\|_p$$
et si $\varphi\mapsto <g,\opa_{|_\dc}\varphi>$ est continue en $\|.\|_p$, elle définit une forme différentielle $h\in L^{p'}_{n-r,n-q}(D,E^*)$ qui vérifie $h=\opa g$ dans $D$ au sens des distributions, donc $g\in Dom(\opa)$.

(ii) Nous venons de prouver que $^t(\opa_c)=\opa$, l'opérateur $\opa$ est donc fermé. De plus par réflexivité $^t(\opa)=\opa_c$.

(iii) Puisque le domaine de définition de l'opérateur $\opa_s$ est dense dans $L^p_{r,q}(D,E)$, il suffit, comme dans la preuve de (i), de montrer que $^t(\opa_{|_\ec})=\opa_{\wt c}$. Soit $g\in L^{p'}_{n-r,n-q-1}(X,E^*)$ à support dans $\ol D$, considérons l'application de $\ec^{r,q}(X,E)$ à valeurs dans $\cb$ définie par $\varphi\mapsto <g,\opa_{|_\ec}\varphi>=\int_{\ol D} g\wedge\opa_{|_\ec}\varphi$. Elle est continue en $\|.\|_p$ si et seulement si $g\in Dom(\opa_{\wt c})$. En effet si $g\in Dom(\opa_{\wt c})$, alors $g\in L^{p'}_{n-r,n-q-1}(X,E^*)$, $\opa g\in L^{p'}_{n-r,n-q}(X,E^*)$ et leur support est contenu dans $\ol D$, alors
$$|\int_{\ol D} g\wedge\opa_{|_\ec}\varphi|=|<g,\opa_{|_\ec}\varphi>|=|<\opa g,\varphi>|=|\int_{\ol D} \opa g\wedge\varphi|\leq C\|\opa g\|_{p'}\|\varphi_{|_D}\|_p$$ et si $\varphi\mapsto <g,\opa_{|_\ec}\varphi>=\int_{\ol D} g\wedge\opa_{|_\ec}\varphi$ est continue en $\|.\|_p$, elle définit une forme différentielle $h\in L^{p'}_{n-r,n-q}(X,E^*)\cap\ec'_{n-r,n-q}(X,E^*)$ dont le support est contenu dans $\ol D$ et qui vérifie $h=\opa g$ dans $X$ au sens des distributions, donc $g\in Dom(\opa_{\wt c})$.

(iv) Nous venons de prouver que $^t(\opa_s)=\opa_{\wt c}$, l'opérateur $\opa_{\wt c}$ est donc fermé. De plus par réflexivité $^t(\opa_{\wt c})=\opa_s$.
\end{proof}

Grâce à la Proposition \ref{transp}, on peut définir, pour tout $0\leq r\leq n$, deux paires de complexes duaux :
$$\big((L^p_{r,\bullet}(D,E),\opa);(L^{p'}_{n-r,\bullet}(D,E^*),\opa_c)\big)$$
et
$$\big((L^p_{r,\bullet}(D,E),\opa_s);(L^{p'}_{n-r,\bullet}(D,E^*),\opa_{\wt c})\big).$$
Si $E^q=L^p_{r,q}(D,E)$ alors $E'_q=L^{p'}_{n-r,n-q}(D,E^*)$ et on note $H^{n-r,n-q}_{c,L^{p'}}(D,E^*)$ et $H^{n-r,n-q}_{\wt c,L^{p'}}(D,E^*)$
les groupes de cohomologie associés aux complexes duaux. Observons que si le bord de $D$ est Lipschitz, il résulte de la Proposition \ref{lip} que
$$H^{n-r,n-q}_{c,L^{p'}}(D,E^*)=H^{n-r,n-q}_{\wt c,L^{p'}}(D,E^*).$$

Rappelons maintenant quelques résultats de dualité concernant les opérateurs non bornés (cf. \cite{Gold}).

Soient $E$ et $F$ deux espaces de Banach et $T$ un opérateur fermé de $E$ dans $F$. Si $E'$ et $F'$ désignent respectivement les espaces duaux forts de $E$ et $F$, on note $^tT$ l'opérateur de $F'$ dans $E'$ transposé de l'opérateur $T$. C'est un opérateur fermé. De plus
$$\ol{\im T}=(\ker~^tT)^\circ,$$
où $(\ker~^tT)^\circ$ désigne le polaire de $\ker~^tT$, c'est à dire l'ensemble des éléments $x\in F$ tels que $\sigma(x)=0$ pour tout $\sigma\in\ker~^tT$.

\begin{thm}\label{dual}
Supposons que $T$ est un opérateur fermé à domaine dense de l'espace de Banach $E$ dans l'espace de Banach $F$, alors les assertions suivantes sont équivalentes~:

(i) $\im T$ est fermée.

(ii) $\im~^tT$ est fermée.

(iii) $\im T=(\ker~^tT)^\circ$.

(iv) $\im~^tT=(\ker T)^\circ$.
\end{thm}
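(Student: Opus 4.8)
The plan is to extract the two formal equivalences at once from the closure identity $\overline{\im T}=(\ker\,{}^tT)^\circ$ stated just above, and then to reduce everything to a single cycle of implications whose only substantial step is the passage from closedness of $\im\,{}^tT$ back to closedness of $\im T$. First, $(i)\Leftrightarrow(iii)$ is immediate: since $\overline{\im T}=(\ker\,{}^tT)^\circ$ holds for every closed densely defined $T$, the range $\im T$ is closed precisely when $\im T=\overline{\im T}=(\ker\,{}^tT)^\circ$, which is assertion $(iii)$. Likewise $(iv)\Rightarrow(ii)$ is free of charge, because a polar set $(\ker T)^\circ$ is norm-closed. It therefore suffices to close the loop by proving $(i)\Rightarrow(iv)$ and $(ii)\Rightarrow(i)$.

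For $(i)\Rightarrow(iv)$ I would argue by factorisation. The inclusion $\im\,{}^tT\subseteq(\ker T)^\circ$ is automatic: if $x'={}^tT\,y'$ then $\langle x',x\rangle=\langle y',Tx\rangle=0$ whenever $x\in\ker T$. For the reverse inclusion, equip $\Dom(T)$ with its graph norm, so that $T$ becomes bounded and $\ker T$ closed, and let $\widehat T:\Dom(T)/\ker T\to\im T$ be the induced continuous bijection. If $\im T$ is closed it is a Banach space, and the open mapping theorem turns $\widehat T$ into a topological isomorphism. Given $x'\in(\ker T)^\circ$, the rule $y'(Tx)=\langle x',x\rangle$ is then well defined and bounded on $\im T$; extending it by Hahn--Banach to an element $y'\in F'$ yields $\langle y',Tx\rangle=\langle x',x\rangle$ for all $x\in\Dom(T)$, i.e. $x'={}^tT\,y'\in\im\,{}^tT$. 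Hence $\im\,{}^tT=(\ker T)^\circ$, which is $(iv)$.

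The genuine obstacle is the converse $(ii)\Rightarrow(i)$: here one cannot simply run the preceding argument for ${}^tT$, since the Hahn--Banach step would produce a functional in $F''$ rather than a vector of $F$, and identifying the two demands reflexivity. The robust remedy is to pass through the reduced minimum modulus $\gamma(T)=\inf\{\|Tx\|/\dist(x,\ker T):x\in\Dom(T)\setminus\ker T\}$, together with its dual analogue $\gamma({}^tT)$. One checks that $\im T$ is closed if and only if $\gamma(T)>0$, and similarly for ${}^tT$, so the whole theorem comes down to the identity $\gamma(T)=\gamma({}^tT)$. Proving this equality --- equivalently, transferring an a priori estimate $\dist(y',\ker\,{}^tT)\le c\,\|{}^tT\,y'\|$ on the dual side into the bound $\dist(x,\ker T)\le c\,\|Tx\|$ on $F$ --- is the analytic heart of the closed range theorem for unbounded operators, and is precisely the point for which I would invoke Goldberg \cite{Gold}. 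Granting $\gamma(T)=\gamma({}^tT)$, assertion $(ii)$ forces $\gamma(T)>0$ and hence $(i)$, completing the cycle $(i)\Rightarrow(iv)\Rightarrow(ii)\Rightarrow(i)$ and, together with $(i)\Leftrightarrow(iii)$, the equivalence of all four statements.
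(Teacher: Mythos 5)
The paper offers no proof of this statement at all: Theorem \ref{dual} is recalled verbatim from Goldberg's book \cite{Gold} as background on unbounded operators, so there is nothing internal to compare your argument against. Your outline is nonetheless correct and well organised. The reduction to the cycle $(i)\Rightarrow(iv)\Rightarrow(ii)\Rightarrow(i)$ together with $(i)\Leftrightarrow(iii)$ is logically complete; the derivation of $(i)\Leftrightarrow(iii)$ from the closure identity $\ol{\im T}=(\ker{}^tT)^\circ$ and of $(iv)\Rightarrow(ii)$ from the norm-closedness of polars is sound; and the factorisation argument for $(i)\Rightarrow(iv)$ is right, including the two points that need care (the quotient $\Dom(T)/\ker T$ is complete for the graph norm because $T$ is closed, so the open mapping theorem applies, and the Hahn--Banach extension $y'$ does lie in $\Dom({}^tT)$ because $\langle y',Tx\rangle=\langle x',x\rangle$ is continuous in the norm of $E$, not merely the graph norm). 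You correctly identify $(ii)\Rightarrow(i)$ --- equivalently the identity $\gamma(T)=\gamma({}^tT)$ for the reduced minimum modulus, i.e.\ the transfer of the a priori estimate from the dual side back to $F$ --- as the analytic core that cannot be obtained by symmetry without reflexivity, and you delegate exactly that step to \cite{Gold}. Since the paper delegates the entire theorem to the same source, your treatment is strictly more detailed than the paper's; the only caveat is that the one genuinely hard implication remains a citation rather than a proof, so the proposal should be read as a structured reduction to Goldberg's closed range theorem rather than a self-contained argument.
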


Dans le cas de complexes duaux cela se traduit par

\begin{thm}\label{complexdual}
Soit $\big((E^\bullet,d);(E'_\bullet,d')\big)$ une paire de complexes d'espaces de Banach réflexifs duaux. Si les opérateurs définissant les complexes sont fermés à domaine dense, alors, pour tout $q\in\zb$, $H^{q+1}(E^\bullet)$ est séparé si et seulement si $H_q(E'_\bullet)$ est séparé.
\end{thm}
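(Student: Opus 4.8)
The plan is to translate the topological notion "separated" into the algebraic condition "closed image" and then invoke the closed-range duality already recorded as Theorem \ref{dual}. Recall that for a topological vector space $V$ and a linear subspace $W$, the quotient $V/W$ equipped with the quotient topology is separated if and only if $W$ is closed in $V$; this is the only general topology I will need.

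First I would fix $q$ and write $d^q : E^q \to E^{q+1}$ for the differential of the cohomological complex, so that $H^{q+1}(E^\bullet) = \ker d^{q+1}/\im d^q$. Since $d^{q+1}$ is a closed operator, $\ker d^{q+1}$ is a closed subspace of $E^{q+1}$, hence itself a Banach space, and $\im d^q \subseteq \ker d^{q+1}$. Consequently $H^{q+1}(E^\bullet)$ is separated if and only if $\im d^q$ is closed in $\ker d^{q+1}$; and because $\ker d^{q+1}$ is in turn closed in $E^{q+1}$, this is equivalent to $\im d^q$ being closed in the whole space $E^{q+1}$.

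Next I would perform the same reduction on the dual side. By definition the dual complex has $E'_q = (E^q)'$ with differential $d'_q = {}^t(d^q) : E'_{q+1} \to E'_q$, so that $H_q(E'_\bullet) = \ker d'_{q-1}/\im d'_q$. By the identical quotient argument, $H_q(E'_\bullet)$ is separated if and only if $\im d'_q = \im {}^t(d^q)$ is closed in $E'_q$. Notice that the degree shift in the statement (cohomology in degree $q+1$ versus homology in degree $q$) is precisely accounted for here: the image $\im d^q$ lives in degree $q+1$ on the cohomological side, while its transpose $\im {}^t(d^q) = \im d'_q$ lives in degree $q$ on the homological side.

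It then remains only to connect the two closedness conditions, and this is exactly what Theorem \ref{dual} provides. Applying it to the closed, densely defined operator $T = d^q$ yields the equivalence of (i) $\im d^q$ closed and (ii) $\im {}^t(d^q)$ closed. Chaining the three equivalences gives that $H^{q+1}(E^\bullet)$ is separated if and only if $\im d^q$ is closed, if and only if $\im {}^t(d^q) = \im d'_q$ is closed, if and only if $H_q(E'_\bullet)$ is separated, as claimed. There is no genuine obstacle here beyond careful bookkeeping of the indices: the whole content is the observation that "separated" means "closed image," combined with the closed-range theorem of Theorem \ref{dual}, whose hypotheses (closed operators with dense domain, reflexive Banach spaces) are exactly those imposed in the present statement via the notion of a pair of dual complexes.
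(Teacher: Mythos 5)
Your proof is correct and follows exactly the route the paper intends: the paper states Theorem \ref{complexdual} as an immediate translation of the closed-range duality of Theorem \ref{dual} without writing out the details, and your argument supplies precisely those details (separatedness of the quotient $\ker d^{q+1}/\im d^q$ is equivalent to closedness of $\im d^q$ in $E^{q+1}$ because $\ker d^{q+1}$ is closed, and symmetrically on the dual side, with the degree shift correctly accounted for by $d'_q={}^t(d^q):E'_{q+1}\to E'_q$). The bookkeeping of indices and the application of Theorem \ref{dual} to $T=d^q$ are all accurate.
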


On obtient également comme dans \cite{LaShdualiteL2}

\begin{prop}\label{nonsep}
Soient $(E^\bullet,d)$ et $(E'_\bullet,d')$ deux complexes d'espaces de Banach duaux. Supposons que $H_{q+1}(E'_\bullet)=0$, alors soit $H^{q+1}(E^\bullet)=0$, soit $H^{q+1}(E^\bullet)$ n'est pas séparé.
\end{prop}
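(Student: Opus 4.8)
The plan is to establish the dichotomy in the equivalent form: \emph{assuming} $H_{q+1}(E'_\bullet)=0$, I show that if $H^{q+1}(E^\bullet)$ is separated then it vanishes. This suffices, because $H^{q+1}(E^\bullet)=\ker d^{q+1}/\im d^q$ is the quotient of the Banach space $\ker d^{q+1}$ — closed since $d^{q+1}$ is a closed operator — by the subspace $\im d^q\subseteq\ker d^{q+1}$, and such a quotient is separated if and only if $\im d^q$ is closed; when $\im d^q$ is not closed we land in the second alternative of the statement. Throughout I use the standing hypothesis of this subsection that the differentials are closed operators with dense domain, so that Theorem \ref{dual} applies, and I write $d'={}^t\!d$ for the transpose differentials of the dual complex. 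Everything thus reduces to proving the implication $\im d^q \text{ closed}\;\Rightarrow\;\im d^q=\ker d^{q+1}$.

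First I exploit the hypothesis. With the correct indexing, $H_{q+1}(E'_\bullet)=0$ means
$$\ker{}^t\!d^q=\im{}^t\!d^{q+1}\qquad(\text{subspaces of }E'_{q+1}).$$
The left-hand side is closed, being the kernel of the closed operator ${}^t\!d^q$; hence $\im{}^t\!d^{q+1}$ is closed. Applying Theorem \ref{dual} to $T=d^{q+1}$, the equivalence (ii)$\Leftrightarrow$(iv) turns this closedness into the polarity identity $\im{}^t\!d^{q+1}=(\ker d^{q+1})^\circ$, so that
$$\ker{}^t\!d^q=(\ker d^{q+1})^\circ.$$

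Now I bring in separatedness. Since $\im d^q$ is closed, Theorem \ref{dual} applied to $T=d^q$, via the equivalence (i)$\Leftrightarrow$(iii), gives $\im d^q=(\ker{}^t\!d^q)^\circ$. Substituting the identity obtained above yields $\im d^q=\big((\ker d^{q+1})^\circ\big)^\circ$, and because $\ker d^{q+1}$ is a \emph{closed} linear subspace of $E^{q+1}$, the bipolar theorem (the pre-annihilator of the annihilator of a subspace equals its closure) gives $\big((\ker d^{q+1})^\circ\big)^\circ=\ker d^{q+1}$. Therefore $\im d^q=\ker d^{q+1}$, i.e. $H^{q+1}(E^\bullet)=0$, which completes the argument.

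The analytic ingredients — the relation $\ol{\im T}=(\ker{}^t\!T)^\circ$, the four equivalences of Theorem \ref{dual}, and the bipolar theorem — are all in hand, so no single step is technically hard. The one genuine point of care, and the place where a sign or index slip would derail the proof, is the bookkeeping: one must verify that the homology hypothesis at level $q+1$ involves precisely the transposes ${}^t\!d^q$ and ${}^t\!d^{q+1}$ dual to the two cohomology differentials $d^q$ and $d^{q+1}$, and that each polar is read in the correct space (an annihilator in the dual versus a pre-annihilator in the primal). Once the two invocations of Theorem \ref{dual} are matched to the right operators, the chain of polar identities closes up and the conclusion is immediate.
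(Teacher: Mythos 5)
Your proof is correct, and it rests on the same duality identity as the paper's --- namely $\ol{\im d^q}=(\ker {}^t d^q)^\circ$ when $\im d^q$ is closed, so that separatedness of $H^{q+1}(E^\bullet)$ reduces everything to identifying $(\ker {}^t d^q)^\circ$ with $\ker d^{q+1}$ --- but you carry out that identification by a different route. The paper does it in one line: since $H_{q+1}(E'_\bullet)=0$ gives $\ker {}^t d^q=\im {}^t d^{q+1}$, every $\sigma={}^t d^{q+1}\tau$ annihilates $\ker d^{q+1}$ by adjointness, and the reverse inclusion $\ol{\im d^q}\subset\ker d^{q+1}$ is automatic from $d^{q+1}\circ d^q=0$; no closed-range theorem and no bipolar theorem are needed for this step. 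You instead observe that $\im {}^t d^{q+1}$, being equal to a kernel, is closed, invoke Theorem \ref{dual} for $T=d^{q+1}$ to get $\im {}^t d^{q+1}=(\ker d^{q+1})^\circ$, and then close the loop with the bipolar theorem. Both routes are valid; yours is somewhat heavier (two applications of Theorem \ref{dual} plus Hahn--Banach in the guise of the bipolar theorem, and it formally requires the dense-domain hypothesis of Theorem \ref{dual}, which the proposition as stated does not include but which holds for all the $\opa$-complexes to which it is applied), whereas the paper's direct computation needs only the single relation $\ol{\im T}=(\ker {}^t T)^\circ$ recalled just before Theorem \ref{dual}. Your bookkeeping of the indices ($H_{q+1}(E'_\bullet)=\ker d'_q/\im d'_{q+1}$ with $d'_q={}^t d^q$) and of the two directions of the polar (annihilator in the dual versus pre-annihilator in the primal) is accurate, and your reduction of the dichotomy to ``separated $\Rightarrow$ zero'' via closedness of $\im d^q$ in the Banach space $\ker d^{q+1}$ matches the paper's implicit use of the same fact.
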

\begin{proof}[Démonstration]
Remarquons que
$$\ol{\im d^q}=\{g\in E^{q+1}~|~<g,f>=0,\forall f\in\ker d'_q\}\subset\ker d^{q+1}.$$
Cette inclusion devient une égalité lorsque $H_{q+1}(E'_\bullet)=0$. On en déduit que, sous cette condition, si l'image de $d^q$ est fermée alors $H^{q+1}(E^\bullet)=0$.
\end{proof}

Nous pouvons maintenant étendre au cas de la cohomologie $L^p$, $p>1$, le Théorème 3.2 de \cite{LaShdualiteL2}

\begin{thm}\label{nonsep2}
Soient $X$ une variété analytique complexe de dimension complexe $n$, $D$ un domaine relativement compact de $X$ et $E$ un fibré holomorphe sur $X$. On suppose que, pour $0\leq r\leq n$, $H_c^{n-r,1}(X,E)=0$ et que $X\setminus D$ est connexe, alors

(i) Si $D$ est à bord rectifiable, soit $H^{r,n-1}_{L^p,s}(D,E)=0$, soit $H^{r,n-1}_{L^p,s}(D,E)$ n'est pas séparé.

(ii) Si $D$ est à bord Lipschitz, soit $H^{r,n-1}_{L^p}(D,E)=0$, soit $H^{r,n-1}_{L^p}(D,E)$ n'est pas séparé.
\end{thm}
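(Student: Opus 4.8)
The plan is to deduce Theorem~\ref{nonsep2} from the abstract duality machinery, namely Proposition~\ref{nonsep} together with Proposition~\ref{transp}, applied to one of the two dual pairs of $L^p$ Cauchy-Riemann complexes introduced above. The key observation is that Proposition~\ref{nonsep} says: if in the dual complex the homology group $H_{q+1}(E'_\bullet)$ vanishes, then the cohomology group $H^{q+1}(E^\bullet)$ is either $0$ or non-separated. So the whole problem reduces to identifying the relevant group in the dual complex as a compactly-supported cohomology group and proving that it vanishes under the hypotheses $H_c^{n-r,1}(X,E)=0$ and $X\setminus D$ connected.

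First I would fix the bidegrees. For part (i) I work with the pair $\big((L^p_{r,\bullet}(D,E),\opa_s);(L^{p'}_{n-r,\bullet}(D,E^*),\opa_{\wt c})\big)$ from Proposition~\ref{transp}(iii)--(iv). Taking $E^q=L^p_{r,q}(D,E)$ so that the dual is $E'_q=L^{p'}_{n-r,n-q}(D,E^*)$, the group $H^{r,n-1}_{L^p,s}(D,E)$ corresponds to $H^{q+1}(E^\bullet)$ with $q+1=n-1$, and the dual homology group $H_{q}(E'_\bullet)$ with $q=n-2$ is exactly the compactly-supported cohomology group $H^{n-r,n-(n-2)}_{\wt c,L^{p'}}(D,E^*)=H^{n-r,2}_{\wt c,L^{p'}}(D,E^*)$; more precisely Proposition~\ref{nonsep} requires $H_{q+1}(E'_\bullet)=H^{n-r,1}_{\wt c,L^{p'}}(D,E^*)=0$. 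So the crux is to show that $H^{n-r,1}_{\wt c,L^{p'}}(D,E^*)=0$. I would establish this vanishing exactly as in Proposition~\ref{degre1}: given $f\in L^{p'}_{n-r,1}(X,E^*)$ supported in $\ol D$ with $\opa f=0$, the hypothesis $H_c^{n-r,1}(X,E^*)=0$ combined with Corollary~\ref{compact} produces a compactly supported $L^{p'}$ solution $g$ of $\opa g=f$, and analytic continuation across the connected open set $X\setminus D$ (on which $g$ is holomorphic and vanishes on an open subset) forces $\supp g\subset\ol D$, so $g$ lies in $\Dom(\opa_{\wt c})$ and represents the class trivially.

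For part (ii), the bord is Lipschitz, so by the Friedrich-lemma remark following the definitions one has $\opa=\opa_s$ and $\opa_c=\opa_{\wt c}$; hence $H^{r,n-1}_{L^p}(D,E)=H^{r,n-1}_{L^p,s}(D,E)$ and the dual groups coincide, $H^{n-r,1}_{c,L^{p'}}(D,E^*)=H^{n-r,1}_{\wt c,L^{p'}}(D,E^*)$. I would therefore run the identical argument on the pair $\big((L^p_{r,\bullet}(D,E),\opa);(L^{p'}_{n-r,\bullet}(D,E^*),\opa_c)\big)$ of Proposition~\ref{transp}(i)--(ii), again reducing to $H^{n-r,1}_{c,L^{p'}}(D,E^*)=0$, which holds by the same Corollary~\ref{compact}/analytic-continuation argument since the $\opa_c$-cohomology is computed by compactly supported forms.

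The main obstacle I anticipate is not the abstract reduction, which is a direct bookkeeping application of Proposition~\ref{nonsep}, but rather the correct identification of the dual homology group in degree $q+1$ with a compactly-supported $L^{p'}$ cohomology group in bidegree $(n-r,1)$, being careful about the index shift $H_q(E'_\bullet)=\ker d_{q-1}/\im d_q$ in the homological convention, and about distinguishing $\opa_c$ from $\opa_{\wt c}$ in the rectifiable versus Lipschitz cases. The vanishing itself is essentially the content of (the proof of) Proposition~\ref{degre1} transposed to degree $1$ forms valued in $E^*$, so the only genuine analytic input is the solvability with support control, already secured earlier in the paper; the hypothesis $H_c^{n-r,1}(X,E)=0$ is precisely what feeds Corollary~\ref{compact}, and the connectedness of $X\setminus D$ is what makes analytic continuation close the support.
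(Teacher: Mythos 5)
Your argument is correct and is essentially the paper's own proof: point (i) is obtained by applying Proposition~\ref{nonsep} to the dual pair and identifying $H_{n-1}(E'_\bullet)$ with $H^{n-r,1}_{\wt c,L^{p'}}(D,E^*)$, whose vanishing is exactly the content of Proposition~\ref{degre1} (your re-derivation via Corollaire~\ref{compact} and analytic continuation is just that proposition's proof), and point (ii) follows because the Lipschitz hypothesis makes $H^{r,n-1}_{L^p,s}(D,E)$ and $H^{r,n-1}_{L^p}(D,E)$ coincide. Your careful bookkeeping of the index shift and of $\opa_c$ versus $\opa_{\wt c}$ matches the paper, so there is nothing further to add.
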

\begin{proof}[Démonstration]
Le point (i) est une conséquence immédiate des Propositions \ref{degre1} et \ref{nonsep}.

Le point (ii) résulte du fait que si le bord de $D$ est Lipschitz, alors les groupes de cohomologie $H^{r,n-1}_{L^p,s}(D,E)$ et $H^{r,n-1}_{L^p}(D,E)$ coïncident.
\end{proof}

En associant le Théorème \ref{complexdual} et la Proposition \ref{nonsep} on obtient
\begin{cor}\label{annulation}
Soient $(E^\bullet,d)$ et $(E'_\bullet,d')$ deux complexes d'espaces de Banach réflexifs duaux dont les opérateurs ont un domaine dense. Supposons que $H_q(E'_\bullet)$ est séparé et que $H_{q+1}(E'_\bullet)=0$, alors $H^{q+1}(E^\bullet)=0$.
\end{cor}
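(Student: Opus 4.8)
The plan is to obtain the vanishing by combining the separation duality of Theorem \ref{complexdual} with the dichotomy of Proposition \ref{nonsep}; the hypotheses of the corollary are arranged precisely so that these two results interlock. Both apply here, because $(E^\bullet,d)$ and $(E'_\bullet,d')$ form a pair of dual complexes of reflexive Banach spaces whose defining operators are closed and have dense domain, which is exactly the standing assumption of each cited statement.

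First I would invoke Theorem \ref{complexdual}. By hypothesis $H_q(E'_\bullet)$ is separated, so the equivalence furnished by that theorem (separation of $H^{q+1}(E^\bullet)$ if and only if separation of $H_q(E'_\bullet)$) forces $H^{q+1}(E^\bullet)$ to be separated as well. This is the step where reflexivity is genuinely used, through the underlying closed-range duality $\ol{\im T}=(\ker {}^tT)^\circ$ of Theorem \ref{dual} together with the biduality ${}^t({}^tT)=T$ valid for reflexive spaces, which makes the pair symmetric and allows the roles of $(E^\bullet,d)$ and $(E'_\bullet,d')$ to be interchanged.

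Next I would invoke Proposition \ref{nonsep} with the remaining hypothesis $H_{q+1}(E'_\bullet)=0$: it yields the dichotomy that $H^{q+1}(E^\bullet)$ is either zero or non-separated. Combining this with the conclusion of the previous paragraph, the non-separated alternative is excluded, and therefore $H^{q+1}(E^\bullet)=0$, which is the desired statement.

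I do not expect a serious obstacle, since the corollary is essentially the logical conjunction of two results already established in the section. The only point deserving care is the bookkeeping of the indices and of the duality pairing between $E^{q+1}$ and $\ker d'_q$, so that the separation statement of Theorem \ref{complexdual} and the image description $\ol{\im d^q}=(\ker d'_q)^\circ$ underlying Proposition \ref{nonsep} refer to one and the same cohomological degree $q+1$. Once the degrees are aligned, the implication is immediate.
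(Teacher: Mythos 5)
Your proposal is correct and is exactly the paper's argument: the corollary is stated immediately after the sentence ``En associant le Th\'eor\`eme \ref{complexdual} et la Proposition \ref{nonsep} on obtient'', i.e.\ the intended proof is precisely the conjunction of the separation equivalence of Theorem \ref{complexdual} (giving that $H^{q+1}(E^\bullet)$ is separated) with the dichotomy of Proposition \ref{nonsep} (excluding the non-separated alternative). Your index bookkeeping also matches the paper's conventions, so nothing further is needed.
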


Nous allons utiliser le Corollaire \ref{annulation} pour étudier le problème de Cauchy faible de résolution à support exact suivant~:
Soit $X$ une variété analytique complexe de dimension complexe $n\geq 2$, $E$ un fibré vectoriel holomorphe sur $X$ et $D$ un domaine relativement compact dans $X$. Etant donnée une $(r,q)$-forme $f$ à coefficients dans $L^p(X,E)$, avec $0\leq r\leq n$ et $1\leq q\leq n$, telle que
$${\rm supp}~f\subset\ol D \quad {\rm et}\quad \opa f=0~{\rm au~sens~des~distributions~dans~}X,$$
existe-t-il une $(r,q-1)$-forme $g$  à coefficients dans $L^p(X)$ telle que
$${\rm supp}~g\subset\ol D \quad {\rm et}\quad \opa g=f~{\rm au~sens~des~distributions~dans~}X~?$$

Observons que lorsque $q=n$, la condition $\opa f=0$ dans l'énoncé du problème de Cauchy est vide et qu'une condition nécessaire pour avoir une solution est que
$f$ satisfasse
$$\int_D f\wedge \varphi=0,~{\rm pour~toute~} \varphi\in L^{p'}_{n-r,0}(D,E^*)\cap Dom(\opa_s)\cap\ker \opa,$$
puisque $\im\opa_{\wt c} \subset(\ker~^t\opa_s)^\circ$ et qu'elle est suffisante si $\im \opa_{\wt c}$ est fermé puisque $\ol{\im \opa_{\wt c}}=(\ker~^t\opa_s)^\circ$. On déduit donc du Théorème \ref{complexdual}
\begin{thm}\label{n}
Soit $X$ une variété analytique complexe de dimension complexe $n\geq 2$, $E$ un fibré vectoriel holomorphe sur $X$ et $D$ un domaine relativement compact dans $X$. On suppose que $D$ est à bord Lipschitz et que $H^{n-r,1}_{L^{p'}}(D,E^*)$ est séparé alors le problème de Cauchy faible de résolution à support exact à une solution en degré $n$ si et seulement si la forme $f$ vérifie
$$\int_D f\wedge \varphi=0,~{\rm pour~toute~} \varphi\in L^{p'}_{n-r,0}(D,E^*)\cap\ker \opa.$$
\end{thm}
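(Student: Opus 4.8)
The plan is to reformulate the degree-$n$ Cauchy problem as a statement about the image of the closed operator $\opa_{\wt c}$, and then to deduce its closedness from the separation hypothesis by the abstract duality of Theorem~\ref{complexdual}.

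First I would fix the identifications. Solving the problem in degree $n$ amounts to finding, for $f$ supported in $\ol D$, a form $g\in L^p_{r,n-1}(D,E)$ supported in $\ol D$ with $\opa_{\wt c}g=f$, i.e.\ $f\in\im\opa_{\wt c}$ for $\opa_{\wt c}\colon L^p_{r,n-1}(D,E)\to L^p_{r,n}(D,E)$. By Proposition~\ref{transp}(iv) this operator is closed and ${}^t\opa_{\wt c}=\opa_s\colon L^{p'}_{n-r,0}(D,E^*)\to L^{p'}_{n-r,1}(D,E^*)$; since $\pa D$ is Lipschitz, $\opa_s=\opa$ by Proposition~\ref{lip}, so $\ker{}^t\opa_{\wt c}=L^{p'}_{n-r,0}(D,E^*)\cap\ker\opa$. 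The identity $\ol{\im T}=(\ker{}^tT)^\circ$ then reads
$$\ol{\im\opa_{\wt c}}=\Big\{f~\Big|~\int_D f\wedge\varphi=0\ \text{pour toute}\ \varphi\in L^{p'}_{n-r,0}(D,E^*)\cap\ker\opa\Big\},$$
so the orthogonality condition of the statement is exactly $f\in\ol{\im\opa_{\wt c}}$. This gives necessity at once, and shows that sufficiency is equivalent to $\im\opa_{\wt c}$ being closed.

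The heart of the argument is therefore the closedness of $\im\opa_{\wt c}$. I would apply Theorem~\ref{complexdual} to the dual pair $\big((L^{p'}_{n-r,\bullet}(D,E^*),\opa_s);(L^p_{r,\bullet}(D,E),\opa_{\wt c})\big)$, which is legitimate by Proposition~\ref{transp} and the reflexivity of the $L^p$-spaces for $1<p<+\infty$. Write $E^\bullet$ for the cohomological complex $(L^{p'}_{n-r,\bullet}(D,E^*),\opa_s)$, so that $H^q(E^\bullet)=H^{n-r,q}_{L^{p'},s}(D,E^*)$, and $E'_\bullet$ for its homological dual, with $E'_q=L^p_{r,n-q}(D,E)$ and $d'_q=\opa_{\wt c}$. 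Because $L^p_{r,n+1}(D,E)=0$, the kernel of $d'_{-1}$ is all of $E'_0=L^p_{r,n}(D,E)$, whence $H_0(E'_\bullet)=L^p_{r,n}(D,E)/\im\opa_{\wt c}$, which is separated precisely when $\im\opa_{\wt c}$ is closed. Theorem~\ref{complexdual} with $q=0$ says that $H^1(E^\bullet)$ is separated if and only if $H_0(E'_\bullet)$ is; and since $\pa D$ is Lipschitz, $H^1(E^\bullet)=H^{n-r,1}_{L^{p'},s}(D,E^*)=H^{n-r,1}_{L^{p'}}(D,E^*)$ is separated by hypothesis. Hence $\im\opa_{\wt c}$ is closed, and $\im\opa_{\wt c}=\ol{\im\opa_{\wt c}}=(\ker{}^t\opa_{\wt c})^\circ$; the orthogonality condition is then equivalent to $f\in\im\opa_{\wt c}$, i.e.\ to solvability, which is the claimed equivalence.

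The main obstacle here is essentially bookkeeping rather than analysis: one must line up the degrees and the directions of the transposed operators in the dual pair, and observe that ``$H_0(E'_\bullet)$ separated'' is genuinely the closedness of $\im\opa_{\wt c}$ in top degree --- this is where $L^p_{r,n+1}(D,E)=0$ is used to identify $\ker d'_{-1}$ with the whole target space. All the analytic input is packaged into the hypotheses (Lipschitz regularity, which via Proposition~\ref{lip} lets one replace $\opa_s$ by $\opa$, and the separation of $H^{n-r,1}_{L^{p'}}(D,E^*)$) and into Theorem~\ref{complexdual}, so no new estimate on $\opa$ is required.
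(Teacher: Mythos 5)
Your argument is correct and is essentially the paper's own: the paper reduces solvability in degree $n$ to $f\in\im\opa_{\wt c}$, obtains necessity from $\im\opa_{\wt c}\subset\ol{\im\opa_{\wt c}}=(\ker{}^t\opa_{\wt c})^\circ$ with $\ker{}^t\opa_{\wt c}=\ker\opa_s=L^{p'}_{n-r,0}(D,E^*)\cap\ker\opa$ (Lipschitz boundary), and gets closedness of the image from Th\'eor\`eme~\ref{complexdual} applied to the dual pair $\big((L^{p'}_{n-r,\bullet}(D,E^*),\opa_s);(L^{p}_{r,\bullet}(D,E),\opa_{\wt c})\big)$ exactly as you do. Your explicit bookkeeping of $H_0(E'_\bullet)=L^p_{r,n}(D,E)/\im\opa_{\wt c}$ is just a more detailed write-up of what the paper leaves implicit.
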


Une réponse affirmative au problème de Cauchy ci-dessus en degré $q$ pour $1\leq q\leq n-1$ équivaut à l'annulation du groupe de cohomologie $H^{r,q}_{\wt c,L^p}(D,E)$. Si de plus le bord de $D$ est Lipschitz, c'est équivalent à $H^{r,q}_{c,L^p}(D,E)=0$. Observons que dans ce cas, il résulte du théorème de l'application ouverte qu'il existe une constante $C>0$, indépendante de $f$, telle que la solution $g$ vérifie $\|g\|_p\leq C~\|f\|_p$.

En appliquant le Corollaire \ref{annulation} au complexe $(E^\bullet,d)$ avec, pour $r$ fixé tel que $0\leq r\leq n$, $E^q=L^{p'}_{r,q}(D,E)$ si $0\leq q\leq n$ et $E^q=\{0\}$ si $q<0$ ou $q>n$, et $d=\opa_{\wt c}$, on obtient~:
\begin{thm}\label{infn-1}
Soit $X$ une variété analytique complexe de dimension complexe $n\geq 2$, $E$ un fibré vectoriel holomorphe sur $X$, $D$ un domaine relativement compact dans $X$ et $q$ un entier compris entre $1$ et $n-1$. On suppose que $D$ est à bord Lipschitz et que $H^{n-r,n-q+1}_{L^{p'}}(D,E^*)$ est séparé et $H^{n-r,n-q}_{L^{p'}}(D,E^*)=0$, alors
$$H^{r,q}_{c,L^p}(D,E)=H^{r,q}_{\wt c,L^p}(D,E)=0.$$
\end{thm}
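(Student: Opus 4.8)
The plan is to obtain the vanishing directly from the abstract duality mechanism of Corollary \ref{annulation}, applied to the pair of dual complexes built from $\opa_{\wt c}$ and $\opa_s$; all the analytic work has in fact already been done in Proposition \ref{transp} and in the separation criterion behind Corollary \ref{annulation}, so the task is essentially to set up the correct dual complex and to match the degrees.

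First I would use the Lipschitz hypothesis to reduce to a single group. By Proposition \ref{lip} (the Friedrichs lemma) the operators $\opa$ and $\opa_s$ coincide, as do $\opa_c$ and $\opa_{\wt c}$, on the spaces under consideration. Hence $H^{r,q}_{c,L^p}(D,E)=H^{r,q}_{\wt c,L^p}(D,E)$, so it is enough to prove $H^{r,q}_{\wt c,L^p}(D,E)=0$; moreover the two hypothesis groups $H^{n-r,n-q+1}_{L^{p'}}(D,E^*)$ and $H^{n-r,n-q}_{L^{p'}}(D,E^*)$, which are defined through $\opa$, may equally be read as the corresponding $\opa_s$-cohomology groups.

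Next I would set up the dual pair. The spaces $L^p_{r,k}(D,E)$ are reflexive with strong dual $L^{p'}_{n-r,n-k}(D,E^*)$, their $\opa_{\wt c}$ operators are closed with dense domain, and by Proposition \ref{transp}(iv) the transpose of $\opa_{\wt c}$ is $\opa_s$. Thus
\[
\big((L^p_{r,\bullet}(D,E),\opa_{\wt c});(L^{p'}_{n-r,\bullet}(D,E^*),\opa_s)\big)
\]
is a pair of dual complexes of reflexive Banach spaces to which Corollary \ref{annulation} applies. Writing $(E^\bullet,d)=(L^p_{r,\bullet}(D,E),\opa_{\wt c})$, its dual $(E'_\bullet,d')$ has terms $E'_k=L^{p'}_{n-r,n-k}(D,E^*)$ and differential $\opa_s$, so its homology is identified with the $\opa_s$-cohomology at the complementary bidegree, $H_k(E'_\bullet)\cong H^{n-r,n-k}_{L^{p'},s}(D,E^*)$.

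Finally I would invoke Corollary \ref{annulation} with its index $m=q-1$, so that its conclusion $H^{m+1}(E^\bullet)=H^q(E^\bullet)$ lands exactly in bidegree $q$. Its two hypotheses, that $H_{q-1}(E'_\bullet)$ be separated and that $H_q(E'_\bullet)=0$, translate through the identification above into: $H^{n-r,n-q+1}_{L^{p'}}(D,E^*)$ is separated and $H^{n-r,n-q}_{L^{p'}}(D,E^*)=0$ — precisely the hypotheses of the theorem — while its conclusion reads $H^{r,q}_{\wt c,L^p}(D,E)=0$. Together with the first step this gives $H^{r,q}_{c,L^p}(D,E)=H^{r,q}_{\wt c,L^p}(D,E)=0$. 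The only genuinely delicate point is the bookkeeping: one must keep the homological grading of the dual complex aligned with the Dolbeault bidegree $n-k$, and one must use the Lipschitz regularity twice, first to identify $\opa$ with $\opa_s$ in the hypotheses and then to pass from the formal group $H^{r,q}_{\wt c,L^p}$ to the group $H^{r,q}_{c,L^p}$ that actually governs $\opa$-resolution with support in $\ol D$.
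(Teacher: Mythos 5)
Your proof is correct and is essentially the argument the paper itself gives: the theorem is stated there as a direct application of Corollaire \ref{annulation} to the complex $(L^p_{r,\bullet}(D,E),\opa_{\wt c})$, whose dual $(L^{p'}_{n-r,\bullet}(D,E^*),\opa_s)$ is supplied by the Proposition \ref{transp}, with the Lipschitz hypothesis used exactly as you do to identify $\opa_s$ with $\opa$ and $\opa_{\wt c}$ with $\opa_c$. Your degree bookkeeping $H_k(E'_\bullet)\cong H^{n-r,n-k}_{L^{p'},s}(D,E^*)$ and the choice of index $q-1$ match the intended reasoning.
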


Il reste maintenant à donner des conditions géométriques sur $D$ permettant d'assurer que $H^{n-r,n-q+1}_{L^{p'}}(D,E^*)$ est séparé et que $H^{n-r,n-q}_{L^{p'}}(D,E^*)=0$ et donc que le problème de Cauchy faible de résolution à support exact à une solution en degré $q$.

Lorsque $X=\cb^n$, N. Ovrelid \cite{Ov} a prouvé que, pour $1\leq q\leq n$, $H^{r,q}_{L^{p'}}(D)=0$ lorsque $D$ est un domaine strictement pseudoconvexe borné à bord de classe $\ci$, ce résultat a été étendu au cas où $D$ est un polydisque par P. Charpentier \cite{Char} et aux intersections transverses finies de domaines strictement pseudoconvexes bornés par C. Menini \cite{Me}.
\begin{cor}\label{cn}
Si $D$ est un polydisque ou une intersection transverse finie de domaines strictement pseudoconvexes bornés de $\cb^n$ et $r$ un entier tel que $0\leq r\leq n$, alors pour tout $q$ tel que $1\leq q\leq n-1$ et toute $(r,q)$-forme $f$ à coefficients dans $L^p(X)$, telle que
$${\rm supp} f\subset\ol D \quad {\rm et}\quad \opa f=0~{\rm au~sens~des~distributions~dans~}X,$$
il existe une $(r,q-1)$-forme $g$  à coefficients dans $L^p(\cb^n)$ telle que
$${\rm supp} g\subset\ol D \quad {\rm et}\quad \opa g=f~{\rm au~sens~des~distributions~dans~}X.$$
De plus il existe une constante $C>0$, indépendante de $f$, telle que la solution $g$ vérifie $\|g\|_p\leq C~\|f\|_p$.
\end{cor}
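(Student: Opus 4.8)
The plan is to read this corollary as a direct specialization of Theorem~\ref{infn-1}: everything reduces to verifying, for the two families of domains in question, the two cohomological hypotheses of that theorem for the conjugate exponent $p'$. First I would check that a polydisc and a finite transverse intersection of bounded strictly pseudoconvex domains both have Lipschitz boundary --- the transversality of the intersection is exactly what makes the corners Lipschitz --- so that the Lipschitz hypothesis of Theorem~\ref{infn-1} is satisfied (and, incidentally, $\opa_c$ and $\opa_{\wt c}$ coincide on such $D$).

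Next I would invoke the vanishing theorems recalled just above the statement. For $D$ of either type and the trivial bundle, the results of Ovrelid, Charpentier and Menini give $H^{s,k}_{L^{p'}}(D)=0$ for every $s$ and every $1\leq k\leq n$. Applying this with $s=n-r$, I would verify the two indices occurring in Theorem~\ref{infn-1}: since $1\leq q\leq n-1$ one has $1\leq n-q\leq n-1$ and $2\leq n-q+1\leq n$, so both $H^{n-r,n-q}_{L^{p'}}(D)=0$ and $H^{n-r,n-q+1}_{L^{p'}}(D)=0$. The latter, being zero, is in particular separated, so both hypotheses of Theorem~\ref{infn-1} are in force.

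Theorem~\ref{infn-1} then gives $H^{r,q}_{c,L^p}(D)=H^{r,q}_{\wt c,L^p}(D)=0$ for every $1\leq q\leq n-1$. By the equivalence recorded just before Theorem~\ref{n} --- namely that the vanishing of $H^{r,q}_{\wt c,L^p}(D)$ is precisely the solvability of the weak Cauchy problem with exact support --- this produces, for each $\opa$-closed $f$ with coefficients in $L^p(\cb^n)$ and $\supp f\subset\ol D$, a form $g$ with coefficients in $L^p(\cb^n)$ satisfying $\supp g\subset\ol D$ and $\opa g=f$ in the sense of distributions.

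Finally I would extract the uniform estimate $\|g\|_p\leq C\,\|f\|_p$ from the open mapping theorem. Since $\opa_{\wt c}$ is a closed operator with dense domain and its cohomology vanishes in degree $q$, its image equals the kernel of $\opa_{\wt c}$ in degree $q$, hence is closed; the continuous bijection induced on the quotient of the (graph-norm) domain by this kernel is then a topological isomorphism, which furnishes a bounded solution operator and a constant $C$ independent of $f$. I do not expect any genuine analytic obstacle here: the only point that needs care is the bookkeeping of the conjugate exponent and the Serre-dual degrees, i.e.\ making sure that both indices $n-q$ and $n-q+1$ fall in the range $[1,n]$ where the cited $L^{p'}$-vanishing holds, which is exactly what forces the restriction $1\leq q\leq n-1$.
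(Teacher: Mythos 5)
Your proposal is correct and follows exactly the route the paper intends: the corollary is presented as a direct specialization of Theorem~\ref{infn-1}, with the hypotheses $H^{n-r,n-q}_{L^{p'}}(D)=0$ and $H^{n-r,n-q+1}_{L^{p'}}(D)$ separated supplied by the vanishing results of Ovrelid, Charpentier and Menini cited in the preceding paragraph, and the estimate $\|g\|_p\leq C\|f\|_p$ obtained from the open mapping theorem as noted just before Theorem~\ref{n}. Your index bookkeeping and the Lipschitz-boundary check are exactly the verifications needed.
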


Plus généralement L. Ma et S. Vassiliadou \cite{MaVa} ont prouvé que $H^{0,q}_{L^{p'}}(D)=0$ pour $q\geq s$, lorsque $D$ est une bonne intersection transverse finie de domaines strictement $s$-convexes de $\cb^n$, $1\leq s\leq n-1$, où $s=1$ correspond à strictement pseudoconvexe. Pour un tel domaine $D$, le problème de Cauchy faible de résolution à support exact aura donc une solution en degré $q$ si $1\leq q\leq n-s$.

Observons que dans le cas où $X$ est une variété analytique complexe et $q=1$, les conditions cohomologiques sur $D$ peuvent être remplacées par une condition cohomologique sur $X$ accompagnée d'une condition topologique sur $D$. A la fin de la section \ref{s1}, nous avons prouvé

\begin{prop}\label{degre1bis}
Soient $X$ une variété analytique complexe de dimension complexe $n$, $D$ un domaine relativement compact de $X$ et $E$ un fibré holomorphe sur $X$. On suppose que, pour $0\leq r\leq n$, $H_c^{n-r,1}(X,E)=0$ (ce qui est satisfait par exemple si $X$ est $(n-1)$-complète au sens d'Andreotti-Grauert et en particulier par les variétés de Stein de dimension $n\geq 2$) et que $X\setminus D$ est connexe, alors pour toute $(n-r,1)$-forme $f$ à coefficients dans $L^p(X)$ à support dans $\ol D$, il existe une $(n-r,0)$-forme $g$ à coefficients dans $L^p(X)$ et à support dans $\ol D$ telle que $\opa g=f$.
\end{prop}

Il résulte du Corollaire \ref{resolutionLp} que lorsque $X$ est une variété analytique complexe et $D\subset\subset X$ un domaine complètement strictement $s$-convexe de $X$ l'équation de Cauchy-Riemann peut-être résolue avec des estimations $L^{p'}$ sur $D$ en degré $q\geq s$.
C'est le cas en particulier lorsque $X$ est une variété de Stein et  $D$ un domaine strictement pseudoconvexe à bord lisse relativement compact dans $X$.

\begin{cor}\label{stein}
Si $X$ est une variété analytique complexe de dimension complexe $n\geq 2$, $D$ un domaine complètement strictement $s$-convexe à bord lisse relativement compact dans $X$ et $r$ un entier tel que $0\leq r\leq n$, alors pour tout $q$ tel que $1\leq q\leq n-s$ et toute $(r,q)$-forme $f$ à coefficients dans $L^p(X)$, telle que
$${\rm supp} f\subset\ol D \quad {\rm et}\quad \opa f=0~{\rm au~sens~des~distributions~dans~}X,$$
il existe une $(r,q-1)$-forme $g$  à coefficients dans $L^p(X)$ telle que
$${\rm supp} g\subset\ol D \quad {\rm et}\quad \opa g=f~{\rm au~sens~des~distributions~dans~}X.$$
\end{cor}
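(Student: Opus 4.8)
The plan is to read off the Corollary from the abstract duality statement Theorem~\ref{infn-1}, its input being the vanishing theorem Corollary~\ref{resolutionLp} applied to the \emph{conjugate} exponent $p'$ (and to $E^*$, here the trivial bundle, since the statement is phrased without a bundle). The positive answer to the weak Cauchy problem in degree $q$ was already identified, in the paragraph preceding Theorem~\ref{infn-1}, with the vanishing of $H^{r,q}_{\wt c,L^p}(D,E)$; so it suffices to establish that vanishing.

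First I would record two elementary facts. A smooth boundary is in particular Lipschitz, so the boundary hypothesis of Theorem~\ref{infn-1} is met. Moreover a completely strictly $s$-convex domain forces $s\geq 1$: the Levi form of its defining function must have at least $n-s+1$ strictly positive eigenvalues, which is impossible unless $s\geq 1$; hence $\max(1,s)=s$ throughout.

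Next, since $D$ is completely strictly $s$-convex, Corollary~\ref{resolutionLp} applied with exponent $p'$ gives $H^{n-r,q'}_{L^{p'}}(D,E^*)=0$ for every $q'$ with $s\leq q'\leq n$. For $1\leq q\leq n-s$ I check that the two degrees fed into Theorem~\ref{infn-1} both land in this range: from $q\leq n-s$ one gets $s\leq n-q$, and from $q\leq n-1$ (itself a consequence of $s\geq 1$) one gets $n-q\leq n$, so $s\leq n-q\leq n$; likewise $q\geq 1$ together with $s\leq n-q$ yields $s\leq n-q+1\leq n$. Therefore $H^{n-r,n-q}_{L^{p'}}(D,E^*)=0$ and $H^{n-r,n-q+1}_{L^{p'}}(D,E^*)=0$, the latter being in particular separated. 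Theorem~\ref{infn-1} then applies and gives $H^{r,q}_{c,L^p}(D,E)=H^{r,q}_{\wt c,L^p}(D,E)=0$. By the definition of $\Dom(\opa_{\wt c})$, this last equality says exactly that any $f\in L^p_{r,q}(X)$ with $\opa f=0$ and ${\rm supp}~f\subset\ol D$ is of the form $f=\opa g$ with $g\in L^p_{r,q-1}(X)$ and ${\rm supp}~g\subset\ol D$, which is the asserted conclusion.

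I do not anticipate a real obstacle: no estimate has to be reproved, the analytic content being entirely packaged in Corollary~\ref{resolutionLp} and Theorem~\ref{infn-1}. The only point requiring care is the index bookkeeping under the duality $q\leftrightarrow n-q$ — verifying that both $n-q$ and $n-q+1$ sit in $[\,s,n\,]$, so that one vanishing statement supplies the separation and the other the annihilation demanded by Theorem~\ref{infn-1}.
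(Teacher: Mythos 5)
Your proof is correct and follows exactly the route the paper intends: Corollary~\ref{resolutionLp} applied with the conjugate exponent $p'$ gives the vanishing of $H^{n-r,n-q}_{L^{p'}}(D,E^*)$ and $H^{n-r,n-q+1}_{L^{p'}}(D,E^*)$ (the latter then trivially separated), and Theorem~\ref{infn-1} converts this into $H^{r,q}_{\wt c,L^p}(D,E)=0$, i.e.\ the solvability of the Cauchy problem with support in $\ol D$. The index verification, including the observation that $s\geq 1$ is forced so that $\max(1,s)=s$ and $q\leq n-s\leq n-1$, matches the paper's (implicit) argument.
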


Il résulte du Théorème \ref{n} que les Corollaires \ref{cn} et \ref{stein} restent valables pour $q=n$ si $f$ satisfait la condition d'orthogonalité
$$\int_D f\wedge \varphi=0,~{\rm pour~toute~} \varphi\in L^{p'}_{n-r,0}(X)\cap\ker \opa.$$

Si le domaine $D$ est complètement strictement $s$-convexe, c'est un ouvert $s$-complet et lorsque le support de $f$ est contenu dans $D$, nous avons déjà prouvé à la fin de la section \ref{s1} qu'alors le support de $g$ est également contenu dans $D$. Notre nouveau résultat autorise le support de $f$ à rencontrer le bord de $D$, nous obtenons donc un contrôle beaucoup plus précis du support de la solution $g$ en fonction du support de la donnée $f$.
\medskip

Nous allons appliquer les résultats que nous venons d'obtenir à l'extension des formes différentielles $\opa$-fermées.

\begin{thm}\label{extension}
Soit $X$ une variété analytique complexe de dimension complexe $n\geq 2$, $E$ un fibré vectoriel holomorphe sur $X$, $D$ un domaine relativement compact dans $X$ et $q$ un entier compris entre $0$ et $n-1$. On suppose que $D$ est à bord Lipschitz et que, pour tout entier $r$, $0\leq r\leq n$, $H^{n-r,n-q}_{L^{p'}}(D,E^*)$ est séparé et $H^{n-r,n-q-1}_{L^{p'}}(D,E^*)=0$ si de plus $q\leq n-2$, alors

(i) Si $f\in W^{1,p}_{r,q}(X\setminus D,E)$, $0\leq q\leq n-2$, vérifie $\opa f=0$ sur $X\setminus D$, il existe $F\in L^p_{r,q}(X,E)$ telle que $F_{|_{X\setminus D}}=f$ et $\opa F=0$ sur $X$;

(ii) Le résultat de (i) reste vrai pour $q=n-1$ si $f$ satisfait
\begin{equation}\label{moment}
\int_{\pa D} f\wedge \varphi=0,~{\rm pour~toute~} \varphi\in L^{p'}_{n-r,0}(X,E^*)\cap\ker \opa.
\end{equation}
\end{thm}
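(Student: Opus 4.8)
The plan is to reduce both parts to the weak Cauchy problem of resolution with exact support treated in Theorems \ref{n} and \ref{infn-1}, using the same extend-then-correct scheme as in the proof of Theorem \ref{hartogs}. Since $\pa D$ is Lipschitz, $f$ admits a global Sobolev extension $\wt f\in W^{1,p}_{r,q}(X,E)$ (the extension theorem already invoked for Theorem \ref{hartogs}, cf. \cite{GrSobolev}). Because $\wt f$ is globally $W^{1,p}$, the form $g:=\opa\wt f$ is a genuine element of $L^p_{r,q+1}(X,E)$, it is $\opa$-closed (as $\opa\opa=0$), and since $\opa\wt f=\opa f=0$ on $X\setminus D$ its support lies in $\ol D$. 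If I can produce $h\in L^p_{r,q}(X,E)$ with $\supp h\subset\ol D$ and $\opa h=g$, then $F:=\wt f-h$ satisfies $\opa F=0$ on $X$, and since $h$ vanishes off $\ol D$ and $\pa D$ is negligible, $F_{|X\setminus D}=f$. So everything comes down to solving the weak Cauchy problem $\opa h=g$ with $\supp h\subset\ol D$ in bidegree $(r,q+1)$.

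For part (i), where $0\le q\le n-2$, the datum $g$ has bidegree $(r,q+1)$ with $1\le q+1\le n-1$. I would apply Theorem \ref{infn-1} with $q+1$ in place of $q$: its hypotheses then read ``$H^{n-r,n-q}_{L^{p'}}(D,E^*)$ s\'epar\'e'' and ``$H^{n-r,n-q-1}_{L^{p'}}(D,E^*)=0$'', which are exactly the standing assumptions of the present theorem (the vanishing being assumed precisely for $q\le n-2$). Theorem \ref{infn-1} gives $H^{r,q+1}_{\wt c,L^p}(D,E)=0$, i.e. the weak Cauchy problem with exact support is solvable in bidegree $(r,q+1)$, which yields the required $h$ and hence $F$.

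For part (ii), $q=n-1$, the datum $g$ has bidegree $(r,n)$, so I would instead invoke Theorem \ref{n} (applied to $n-r$ and $E^*$): under the separation of $H^{n-r,1}_{L^{p'}}(D,E^*)$ alone, the problem is solvable if and only if $\int_D g\wedge\varphi=0$ for every $\varphi\in L^{p'}_{n-r,0}(D,E^*)\cap\ker\opa$. The main work is to derive this orthogonality from the moment hypothesis \eqref{moment}. Approximating such a $\varphi$ by $\varphi_\nu\in\ec^{n-r,0}(X,E^*)$ with $\varphi_\nu\to\varphi$ and $\opa\varphi_\nu\to0$ in $L^{p'}(D)$ (as in the description of $\ker\opa_s$) and expanding $\opa(\wt f\wedge\varphi_\nu)=\opa\wt f\wedge\varphi_\nu+(-1)^{r+n-1}\wt f\wedge\opa\varphi_\nu$, Stokes on $D$ gives $\int_D\opa\wt f\wedge\varphi_\nu=\int_{\pa D}f\wedge\varphi_\nu-(-1)^{r+n-1}\int_D\wt f\wedge\opa\varphi_\nu$, using $\wt f_{|\pa D}=f$; the interior error tends to $0$ since $\wt f\in L^p$ and $\opa\varphi_\nu\to0$ in $L^{p'}$, so $\int_D g\wedge\varphi=\lim_\nu\int_{\pa D}f\wedge\varphi_\nu$. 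The step I expect to be the real obstacle is passing this boundary limit to $0$: condition \eqref{moment} directly annihilates traces of \emph{global} holomorphic forms, whereas Theorem \ref{n} tests against \emph{all} $L^{p'}$ holomorphic forms on $D$. I would close this gap by showing that the functional $\psi\mapsto\int_{\pa D}f\wedge\psi$ (continuous since $f\in W^{1-\frac1p,p}(\pa D)$) extends to the weak trace space in which $\varphi_\nu{}_{|\pa D}$ converges, and that the relevant boundary values are approximable by traces of global holomorphic forms, an approximation argument compatible with the separation hypothesis. Granting this, \eqref{moment} forces $\lim_\nu\int_{\pa D}f\wedge\varphi_\nu=0$, hence $g\in\im\opa_{\wt c}$, and $F=\wt f-h$ is once more the desired $\opa$-closed extension.
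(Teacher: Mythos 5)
Your proposal follows the same route as the paper's proof: extend $f$ across $D$ with the Lipschitz extension operator of \cite{GrSobolev}, observe that $\opa\wt f\in L^p_{r,q+1}(X,E)$ is supported in $\ol D$, solve the weak Cauchy problem with exact support in bidegree $(r,q+1)$ (Theorem \ref{infn-1} when $q\leq n-2$, Theorem \ref{n} when $q=n-1$), and set $F=\wt f-h$. Part (i) coincides with the paper's argument, including the bookkeeping that identifies the hypotheses of Theorem \ref{infn-1} at level $q+1$ with the standing assumptions of the present theorem. For part (ii) the paper proceeds more directly than you do: it approximates $\wt f$ by smooth forms $\wt f_\nu$ via Friedrich's lemma, applies Stokes to obtain $\int_D\opa\wt f\wedge\varphi=\int_{\pa D}f\wedge\varphi=0$ for each $\varphi$ occurring in \eqref{moment}, and then invokes Theorem \ref{n}; it does not approximate $\varphi$, and it does not carry out (or even mention) the density step you single out as ``the real obstacle''. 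Your observation that \eqref{moment} only tests against global $\opa$-closed forms in $L^{p'}_{n-r,0}(X,E^*)$, whereas Theorem \ref{n} as stated demands orthogonality against all of $L^{p'}_{n-r,0}(D,E^*)\cap\ker\opa$, is a legitimate point which the paper passes over silently; but your proposed remedy (extending the boundary pairing to a weak trace space and approximating $L^{p'}(D)$-holomorphic forms by global ones) remains a sketch, so as written your part (ii) is no more complete than the paper's --- it merely makes explicit an identification that the paper leaves implicit.
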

\begin{proof}[Démonstration]
Puisque $D$ est à bord Lipschitz, il existe un opérateur d'extension continu $Ext~:~W^{1,p}_{r,q}(X\setminus D,E)\to W^{1,p}_{r,q}(X,E)$. Posons $\wt f=Ext(f)$, alors $\wt f_{|_{X\setminus D}}=f$ et $\opa\wt f\in L^p_{r,q+1}(X,E)$, de plus le support de $\opa\wt f$ est contenu dans $\ol D$. Si $1\leq q\leq n-2$, en appliquant le Théorème \ref{infn-1}, on obtient l'existence d'une $(r,q)$-forme $g$  à coefficients dans $L^p(X,E)$ telle que
${\rm supp} g\subset\ol D$  et $ \opa g=\opa\wt f$ au sens des distributions dans $X$.
La forme différentielle $F=\wt f-g$ satisfait la conclusion du théorème.

Si $q=n-1$, on a pour toute $\varphi\in L^{p'}_{n-r,0}(X,E^*)\cap\ker \opa$
\begin{equation*}
\int_D \opa\wt f\wedge\varphi=\lim_{\nu\to +\infty}\int_{D}\opa\wt f_\nu\wedge\varphi=\lim_{\nu\to +\infty}\int_D d(\wt f_\nu\wedge\varphi)=\lim_{\nu\to +\infty}\int_{\pa D} \wt f_\nu\wedge\varphi=\int_{\pa D} f\wedge\varphi=0,
\end{equation*}
par la formule de Stokes, où la suite $(\wt f_\nu)_{\nu\in\nb}$ est une suite de formes de classe $\ci$ sur $X$ qui converge vers $\wt f$ en norme $W^{1,p}$, obtenue grâce au lemme de Friedrich. On peut alors appliquer le Théorème \ref{n}.
\end{proof}

On en déduit facilement le corollaire suivant sur la résolution de l'opérateur de Cauchy-Riemann dans un anneau~:
\begin{cor}\label{anneau}
Soit $X$ une variété analytique complexe de dimension complexe $n\geq 2$, $E$ un fibré vectoriel holomorphe sur $X$, $D\subset\subset D_1$ deux domaines relativement compacts dans $X$ et $q$ un entier compris entre $1$ et $n-1$. On suppose que  $D$ est à bord Lipschitz et que, si $0\leq r\leq n$, $H^{r,q}_{L^{p}}(D_1,E^*)=0$, $H^{n-r,n-q}_{L^{p'}}(D,E^*)$ est séparé. 

Si $H^{n-r,n-q-1}_{L^{p'}}(D,E^*)=0$ et si $f\in W^{1,p}_{r,q}(D_1\setminus D,E)$, $1\leq q\leq n-2$, vérifie $\opa f=0$ sur $X\setminus D$, il existe une $(r,q-1)$-forme $g$  à coefficients dans $L^p(D_1\setminus D)$ telle que $\opa g=f$.

Si $q=n-1$, le résultat reste vrai sous réserve que $f$ satisfasse \eqref{moment}.

Si de plus le bord de $D_1$ est de classe $\ci$ et satisfait la condition $Z(q)$, il existe une $(r,q-1)$-forme $g$  à coefficients dans $W^{1,p}(D_1\setminus D,E)\cap W^{1/2,p}(\ol D_1\setminus D,E)$ telle que $\opa g=f$.

\end{cor}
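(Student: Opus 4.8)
The plan is to deduce the statement from the extension Theorem~\ref{extension}, applied with the ambient manifold taken to be $D_1$ rather than $X$, followed by a global resolution of $\opa$ on $D_1$. The point is that the cohomological hypotheses bearing on $D$, namely that $H^{n-r,n-q}_{L^{p'}}(D,E^*)$ is separated and (when $q\leq n-2$) that $H^{n-r,n-q-1}_{L^{p'}}(D,E^*)=0$, are intrinsic to $D$ and $E$ and do not depend on whether $D$ is viewed inside $X$ or inside $D_1$. Since $D$ has Lipschitz boundary and $D\subset\subset D_1$, I may therefore apply Theorem~\ref{extension} to the pair $(D_1,D)$ and to the form $f\in W^{1,p}_{r,q}(D_1\setminus D,E)$, which is $\opa$-closed on $D_1\setminus D$. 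For $1\leq q\leq n-2$ this is part~(i); for $q=n-1$ it is part~(ii), whose sole additional hypothesis is precisely the moment condition~\eqref{moment}. In either case I obtain a form $F\in L^p_{r,q}(D_1,E)$ with $F_{|_{D_1\setminus D}}=f$ and $\opa F=0$ on $D_1$.

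Next, since $F$ is a $\opa$-closed $(r,q)$-form with coefficients in $L^p(D_1)$ and the cohomology group $H^{r,q}_{L^p}(D_1,E)$ vanishes by hypothesis, there exists $G\in L^p_{r,q-1}(D_1,E)$ with $\opa G=F$ on $D_1$. Restricting to the annulus and setting $g=G_{|_{D_1\setminus D}}$ yields $\opa g=F_{|_{D_1\setminus D}}=f$ on $D_1\setminus D$, with $g\in L^p_{r,q-1}(D_1\setminus D,E)$; this is the first conclusion, valid for $1\leq q\leq n-2$, and for $q=n-1$ under the moment condition~\eqref{moment}.

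For the regularity statement I assume in addition that $\pa D_1$ is of class $\ci$ and satisfies condition $Z(q)$. Under this hypothesis $\opa$ is solvable on $D_1$ in bidegree $(r,q)$ with Sobolev estimates up to the boundary: as in the proof of Corollaire~\ref{resolutionLp}, the global homotopy formula of Theorem~\ref{glob} combined with the boundary Sobolev estimates of \cite{BGS} allows one to choose the primitive $G$ in $W^{1/2,p}_{r,q-1}(\ol D_1,E)$, while the interior regularity of the canonical solution (Theorem~4(b) of \cite{BGS}, every vector field being admissible in the interior) yields the full gain of one derivative inside $D_1$, hence $G\in W^{1,p}_{r,q-1}(D_1\setminus D,E)$ on the open annulus, since $F\in L^p_{r,q}(D_1,E)$. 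Restricting this $G$ to the annulus furnishes a solution $g\in W^{1,p}_{r,q-1}(D_1\setminus D,E)\cap W^{1/2,p}_{r,q-1}(\ol D_1\setminus D,E)$.

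The main obstacle is the regularity part. The qualitative vanishing $H^{r,q}_{L^p}(D_1,E)=0$ only produces an $L^p$ primitive on $D_1$; to obtain a single solution that is simultaneously $W^{1,p}$ in the open annulus and $W^{1/2,p}$ up to the smooth boundary $\pa D_1$, one must invoke condition $Z(q)$ precisely to guarantee the half-derivative gain up to $\pa D_1$ provided by \cite{BGS}, and then combine it with the interior regularity of $\opa$. By contrast the extension step and the top-degree moment bookkeeping are routine once Theorem~\ref{extension} and Theorem~\ref{n} are in hand.
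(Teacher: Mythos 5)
Votre démonstration est correcte et suit exactement la voie que l'article sous-entend (le texte se contente de dire que le corollaire « se déduit facilement » du Théorème~\ref{extension}) : extension de $f$ à travers $D$ en une forme $\opa$-fermée sur $D_1$ via le Théorème~\ref{extension} appliqué avec $D_1$ comme variété ambiante, résolution globale sur $D_1$ grâce à l'annulation de $H^{r,q}_{L^p}(D_1,E)$, restriction à l'anneau, puis recours aux estimations de \cite{BGS} sous la condition $Z(q)$ pour la régularité $W^{1/2,p}(\ol D_1)$ et à la régularité intérieure pour le gain $W^{1,p}$ loin de $\pa D_1$. Notez seulement qu'en prenant $D_1$ comme ambiant, la condition de moments pour $q=n-1$ porte a priori sur les formes holomorphes $L^{p'}$ de $D_1$ et non de $X$, nuance déjà présente dans l'énoncé du corollaire lui-même.
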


Donnons des exemples de domaines $D$ et $D_1$ qui satisfont les hypothèses du Théorème \ref{extension} et du Corollaire \ref{anneau}.

Si $X=\cb^n$, $D$ et $D_1$ peuvent être chacun des domaines strictement pseudoconvexes bornés à bord $\ci$ ou des intersections transverses finies de tels domaines ou encore des polydisques. Pour un entier $q$ donné, $D$ et $D_1$ peuvent également être respectivement des domaines $s$-convexes et $s^*$-convexes ou des intersections finies transverses de tels domaines au sens de Ma et Vassiliadou avec $s\leq n-q-1$ et $s^*\leq q$.

Si $X$ est une variété analytique complexe de dimension complexe $n\geq 2$ et $q=0$, le Théorème  \ref{extension} est valide  si  $X$ est une variété de Stein et $X\setminus D$ est connexe (on retrouve ainsi le phénomène de Hartogs car $f$ est alors une fonction holomorphe) par la Proposition \ref{degre1bis}.

Si $X$ est une variété analytique complexe de dimension complexe $n\geq 2$ et $q\geq 1$, les domaines $D$ et $D_1$ peuvent être respectivement des domaines complètement strictement $s$-convexes et complètement strictement $s^*$-convexes à bord $\ci$ avec $s\leq n-q-1$ et $s^*\leq q$. Ces conditions seront en particulier vérifiées si $X$ est une variété de Stein et $D\subset\subset D_1\subset\subset X$ deux domaines strictement pseudoconvexes à bord $\ci$.

Les deux résultats précédents ont été prouvés par D. Chakrabarty et M.-C. Shaw dans le paragraphe 4 de \cite{ChaSh} lorsque $p=2$ en utilisant des poids singuliers. 
\medskip

Terminons cette section par une extension au cadre $L^p$ de la Proposition 4.7 et du Corollaire 4.8 démontrés dans \cite{LaShdualiteL2} pour $p=2$.
\begin{prop}
Soit $X$ une variété de Stein de dimension complexe $n\geq 2$, $E$ un fibré vectoriel holomorphe sur $X$ et $D$ un domaine relativement compact dans $X$ à bord Lipschitz tel que $X\setminus D$ soit connexe. Alors, pour tout $r$ tel que $0\leq r\leq n$, $H^{r,n}_{c,L^p}(X\setminus\ol D,E)$ est séparé si et seulement si $H^{r,n-1}_{W^{1,p}}(D,E)=0$, où $H^{r,n-1}_{W^{1,p}}(D,E)$ désigne le groupe de  cohomologie $(r,n-1)$ de $E$ à coefficients $W^{1,p}$.
\end{prop}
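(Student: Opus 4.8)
The plan is to obtain the equivalence as the composition of a Serre-duality step and a separation dichotomy, in the spirit of the proof of Proposition~4.7 and Corollary~4.8 of \cite{LaShdualiteL2}. Write $Y=X\setminus\ol D$ and view $H^{r,n}_{c,L^p}(Y,E)$ as the top cohomology group $H^n(E^\bullet)$ of the cohomological complex $E^\bullet=\big((L^p_c)^{r,\bullet}(Y,E),\opa\big)$ of reflexive Banach spaces. As in Proposition~\ref{transp}, the transpose of the compactly supported operator is the maximal operator in complementary bidegree, so the dual complex computes the $L^{p'}$ Dolbeault cohomology of $Y$ in bidegree $(n-r,\bullet)$ with values in $E^*$. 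Theorem~\ref{complexdual}, applied with $q+1=n$, then gives that $H^{r,n}_{c,L^p}(Y,E)$ is separated if and only if the dual degree-$1$ group $H^{n-r,1}_{L^{p'}}(Y,E^*)$ is separated.

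The heart of the matter is to transport this condition on the unbounded complement $Y$ to the relatively compact domain $D$. Here the hypotheses that $X$ is Stein (so that $H^{n-r,1}_c(X,E^*)=0$, since $n\geq 2$), that $X\setminus D$ is connected and that $\pa D$ is Lipschitz are exactly those under which Proposition~\ref{degre1bis}, applied with $E$ replaced by $E^*$ and $p$ by $p'$, solves $\opa g=f$ with support in $\ol D$ for every $(n-r,1)$-form $f$ supported in $\ol D$. Reading this solvability in the dual pair $\big((L^p_{r,\bullet}(D,E),\opa_s);(L^{p'}_{n-r,\bullet}(D,E^*),\opa_{\wt c})\big)$ of Proposition~\ref{transp}, and combining it with Proposition~\ref{nonsep}, one obtains the dichotomy that $H^{r,n-1}_{L^p}(D,E)$ is either zero or non-separated. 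This is precisely the content of Proposition~\ref{nonsep2}(ii), whose hypotheses ($H^{n-r,1}_c(X,E)=0$, $X\setminus D$ connected, $\pa D$ Lipschitz) are all satisfied. Once separation of $H^{n-r,1}_{L^{p'}}(Y,E^*)$ is matched, through this same duality between $D$ and its complement, with separation of $H^{r,n-1}_{L^p}(D,E)$, the dichotomy upgrades ``separated'' to ``zero'': $H^{r,n}_{c,L^p}(Y,E)$ separated $\iff H^{r,n-1}_{L^p}(D,E)$ separated $\iff H^{r,n-1}_{L^p}(D,E)=0$.

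It remains to replace $L^p$ by $W^{1,p}$ on the $D$-side, which is a regularity statement rather than a new cohomological input. By the interior regularity of the canonical solution of the Cauchy-Riemann equation used in Proposition~\ref{reg} (the Sobolev estimates of \cite{BGS}), any $(r,n-1)$-form that is $\opa$-exact with $L^p$ coefficients already admits a $W^{1,p}_{loc}$ primitive, whence $H^{r,n-1}_{W^{1,p}}(D,E)=0$ if and only if $H^{r,n-1}_{L^p}(D,E)=0$. Chaining the three equivalences yields the proposition. I expect the main obstacle to be the transport step of the second paragraph: one must verify that the Serre dual of the compactly supported top-degree cohomology on the \emph{non} relatively compact complement $Y=X\setminus\ol D$ is faithfully computed by the relatively compact dual complex attached to $D$. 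This requires combining the Hartogs-type extension across $\pa D$ (legitimate because $n\geq 2$, $X$ is Stein and $X\setminus D$ is connected) with the closed-range duality of Theorem~\ref{dual}, while keeping careful track of supports in a neighborhood of the Lipschitz boundary.
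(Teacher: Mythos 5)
Your strategy has two genuine gaps, and they sit exactly at the places you yourself flag as delicate. First, the entire duality machinery of Section~\ref{dualite} (Proposition~\ref{transp}, Theorem~\ref{complexdual}) is built for a \emph{relatively compact} domain $D\subset\subset X$ with rectifiable boundary; you invoke it for $Y=X\setminus\ol D$, which is not relatively compact, so the operators $\opa_c$, $\opa_{\wt c}$ and the pairing between $L^p_{r,\bullet}(Y,E)$ and $L^{p'}_{n-r,\bullet}(Y,E^*)$ are not covered by the results you cite. Second, and more seriously, the ``transport step'' --- matching separation of $H^{n-r,1}_{L^{p'}}(Y,E^*)$ with separation of $H^{r,n-1}_{L^p}(D,E)$ ``through this same duality between $D$ and its complement'' --- is not a duality established anywhere in the paper: $D$ and $Y$ are \emph{complementary} domains, not dual exponents or dual bidegrees on the same domain. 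A relation between the cohomology of $D$ and the compactly supported cohomology of $X\setminus\ol D$ is precisely what the proposition is asserting, so at this point your argument is circular. Finally, your last reduction ($H^{r,n-1}_{W^{1,p}}(D,E)=0\iff H^{r,n-1}_{L^p}(D,E)=0$ by interior regularity) does not work: interior regularity only produces primitives in $W^{1,p}_{loc}(D)$, whereas the $W^{1,p}$ cohomology up to the boundary is what is needed --- indeed the proof of the reverse implication hinges on extending the primitive $h$ across the Lipschitz boundary with compact support, which requires $h\in W^{1,p}(D)$ and fails for a merely $L^p$ or $W^{1,p}_{loc}$ primitive.

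The paper's proof avoids all of this by arguing directly with the closed-range characterization of separation, in both directions. For one implication: extend a $\opa$-closed $f\in W^{1,p}_{r,n-1}(D,E)$ to a compactly supported $\wt f\in W^{1,p}(X,E)$ (Lipschitz boundary), observe that $\opa\wt f$ is an $(r,n)$-form with $L^p$ coefficients supported in $X\setminus D$, verify the moment condition against holomorphic $(n,0)$-forms on $X\setminus D$ via the Hartogs phenomenon (here $n\geq 2$, $X$ Stein, $X\setminus D$ connected are used), and apply the separation hypothesis to write $\opa\wt f=\opa g$ with $g$ compactly supported in $X\setminus D$; then $\wt f-g$ is globally $\opa$-closed and the vanishing of $H^{0,n-1}_{L^p_{loc}}(X,E)$ plus interior regularity (Proposition~\ref{reg}) gives the $W^{1,p}$ primitive on $D$. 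The converse direction similarly solves $f=\opa g$ with compact support in $X$ using Serre duality on the Stein manifold $X$ itself, and then corrects $g$ on $D$ using $H^{r,n-1}_{W^{1,p}}(D,E)=0$ and a compactly supported $W^{1,p}$ extension of the primitive. If you want to salvage your plan, you would have to replace the second paragraph by exactly this kind of explicit extension-and-correction argument; the abstract duality statements alone do not bridge $D$ and its complement.
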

\begin{proof}[Démonstration]
Soit $f\in W^{1,p}_{r,n-1}(D,E)$ une forme $\opa$-fermée sur $D$, notons $\wt f$ une extension $W^{1,p}$ à support compact de $f$ à $X$. Une telle extension existe puisque le bord de $D$ est Lipschitz par le Théorème 1.4.3.1 de \cite{GrSobolev}. La $(r,n)$-forme $\opa\wt f$ est à coefficients $L^p$ et à support compact dans $X\setminus D$. De plus elle satisfait
$$\int_{X\setminus D} \theta\wedge\opa\wt f=0$$
pour toute $(n,0)$-forme holomorphe sur $X\setminus D$. En effet puisse que $X\setminus D$ est connexe et $X$ est une variété de Stein de  dimension complexe $n\geq 2$, le phénomène de Hartogs implique que $\theta$ s'étend en une $(n,0)$-forme $\wt\theta$ holomorphe sur $X$ et
$$\int_{X\setminus D} \theta\wedge\opa\wt f=\int_X \wt\theta\wedge\opa\wt f=\int_X \opa\wt\theta\wedge\wt f=0.$$
La propriété de séparation du groupe $H^{r,n}_{c,L^p}(X\setminus\ol D,E)=H^{r,n}_{\wt c,L^p}(X\setminus\ol D,E)$ implique alors qu'il existe une forme $g\in L^p_{r,n-1}(D,E)$ à support compact dans $X\setminus D$ telle que $\opa\wt f=\opa g$. par conséquent la forme $\wt f-g$ est une $(r,n-1)$-forme $\opa$-fermée sur $X$ dont la restriction à $D$ est égale à $f$. Puisque $X$ est une variété de Stein, il résulte de la section \ref{s1} que $H^{0,n-1}_{L^p_{loc}}(X,E)=0$. Il existe donc une forme $h\in (L^p_{loc})^{0,n-2}(X,E)$ telle que $\wt f-g=\opa h$ sur $X$. Par la régularité intérieure de l'opérateur $\opa$ (cf. Proposition \ref{reg}), on peut choisir $h$ telle que $h\in (W^{1,p})^{0,n-2}(D,E)$ et puisque $g$ est  à support compact dans $X\setminus D$, on a $f=\opa h$ sur $D$ et donc $H^{r,n-1}_{W^{1,p}}(D,E)=0$.

Réciproquement soit $f\in L^p_{r,n}(X,E)$ une forme $\opa$-fermée à support compact dans $X\setminus D$, orthogonale aux $(n-r,0)$-formes $L^{p'}$ à valeurs dans $E^*$ et holomorphes dans $X\setminus\ol D$ et en particulier aux $(n-r,0)$-formes $L^{p'}$ à valeurs dans $E^*$ et holomorphes dans $X$. Puisque $X$ est une variété de Stein, il résulte de la dualité de Serre que $H^{r,n}_{c,L^p}(X,E)$ est séparé et par conséquent il existe une $(0,n-1)$-forme $g\in  L^p_{r,n-1}(X,E)$ à support compact dans $X$ telle que $f=\opa g$. De nouveau la  régularité intérieure de l'opérateur $\opa$ implique que l'on peut choisir $g$ dans $W^{1,p}_{r,n-1}(X,E)$. Comme le support de $f$ est contenu dans $X\setminus D$, la forme $g$ est $\opa$-fermée dans $D$ et puisque $H^{r,n-1}_{W^{1,p}}(D,E)=0$, on a $g=\opa h$ pour une $(r,n-2)$-forme $h$ dans $W^{1,p}_{r,n-2}(D,E)$. Soit $\wt h$ une extension $W^{1,p}$ de $h$ à support compact dans $X$, alors $g-\opa\wt h$ est dans $L^p_{r,n-1}(X,E)$, à support compact dans $X$, s'annule sur $D$ et vérifie $\opa(g-\opa\wt h)=f$, ce qui prouve que $H^{r,n}_{c,L^p}(X\setminus\ol D,E)$ est séparé.
\end{proof}

En utilisant la dualité entre les complexes $\big((L^p_{r,\bullet}(D,E),\opa)$ et $(L^{p'}_{n-r,\bullet}(D,E^*),\opa_c)\big)$, on obtient
\begin{cor}
Soit $X$ une variété de Stein de dimension complexe $n\geq 2$, $E$ un fibré vectoriel holomorphe sur $X$ et $D$ un domaine relativement compact dans $X$ à bord Lipschitz tel que $X\setminus D$ soit connexe. Alors, pour tout $r$ tel que $0\leq r\leq n$, soit $H^{r,n-1}_{W^{1,p}}(D,E)=0$, soit $H^{n-r,1}_{L^{p'}}(X\setminus\ol D,E^*)$ n'est pas séparé.
\end{cor}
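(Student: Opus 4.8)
The plan is to establish the sharper equivalence
$$H^{r,n-1}_{W^{1,p}}(D,E)=0 \iff H^{n-r,1}_{L^{p'}}(X\setminus\ol D,E^*)\ \text{est s\'epar\'e},$$
the announced alternative being its contrapositive reformulation. The group $H^{r,n}_{c,L^p}(X\setminus\ol D,E)$ will serve as a bridge: the preceding Proposition already identifies the separation of this group with the vanishing of $H^{r,n-1}_{W^{1,p}}(D,E)$, so all that is left is to tie its separation to that of $H^{n-r,1}_{L^{p'}}(X\setminus\ol D,E^*)$, and for this I would invoke the abstract duality of Theorem \ref{complexdual}.

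Write $\Omega=X\setminus\ol D$. First I would introduce on $\Omega$ the pair of dual complexes $\big((L^{p'}_{n-r,\bullet}(\Omega,E^*),\opa);(L^p_{r,\bullet}(\Omega,E),\opa_c)\big)$, which is a pair of reflexive Banach complexes since $1<p,p'<+\infty$ and whose operators are closed, densely defined and mutually transposed by Proposition \ref{transp}. Setting $E^q=L^{p'}_{n-r,q}(\Omega,E^*)$, one has $H^q(E^\bullet)=H^{n-r,q}_{L^{p'}}(\Omega,E^*)$ while the dual complex has $H_q(E'_\bullet)=H^{r,n-q}_{c,L^p}(\Omega,E)$. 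Applying Theorem \ref{complexdual} with $q=0$ then gives that $H^{n-r,1}_{L^{p'}}(\Omega,E^*)$ is separated if and only if $H^{r,n}_{c,L^p}(\Omega,E)$ is separated. Chaining this with the preceding Proposition yields the displayed equivalence, and hence the corollary.

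The hard part will be technical rather than conceptual: Proposition \ref{transp} and the duality machinery of Section \ref{s2} were set up for a relatively compact domain, whereas $\Omega=X\setminus\ol D$ is open but not relatively compact. I would therefore need to check that the transpose computations of Proposition \ref{transp}, which are local along $\pa D$, carry over to $\Omega$, together with the coincidence $H^{r,n}_{c,L^p}(\Omega,E)=H^{r,n}_{\wt c,L^p}(\Omega,E)$ afforded by the Lipschitz regularity of $\pa D$. Both points are handled exactly as in the proof of the preceding Proposition, where the group $H^{r,n}_{c,L^p}(X\setminus\ol D,E)$ is already used.
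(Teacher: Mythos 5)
Your proposal is correct and follows exactly the route the paper intends: the paper's one-line argument is precisely to combine the preceding Proposition (separation of $H^{r,n}_{c,L^p}(X\setminus\ol D,E)$ $\Leftrightarrow$ vanishing of $H^{r,n-1}_{W^{1,p}}(D,E)$) with the abstract duality of Theorem \ref{complexdual} applied to the pair of complexes on $X\setminus\ol D$. Your remark that the duality machinery must be checked on the non relatively compact open set $X\setminus\ol D$ is a fair point of care, but it is handled exactly as you indicate, since the transpose computations of Proposition \ref{transp} and the identification $\opa_c=\opa_{\wt c}$ under the Lipschitz hypothesis do not use relative compactness.
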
 

\providecommand{\bysame}{\leavevmode\hbox to3em{\hrulefill}\thinspace}
\providecommand{\MR}{\relax\ifhmode\unskip\space\fi MR }
\providecommand{\MRhref}[2]{%
  \href{http://www.ams.org/mathscinet-getitem?mr=#1}{#2}
}
\providecommand{\href}[2]{#2}

\enddocument

\end